\documentclass[a4paper,reqno]{amsart}

\usepackage{amsmath, amssymb, amsthm,mathtools,stmaryrd}
\usepackage{thmtools}
\usepackage{cancel}
\usepackage[normalem]{ulem}
\usepackage{hyperref}
\usepackage{cleveref}
\usepackage[english]{babel}
\usepackage{enumitem}

\usepackage[all]{xy}
\usepackage{graphicx}
\usepackage{mathtools}
\usepackage{mathrsfs}

\usepackage{verbatim}

\usepackage{color}
\usepackage{tikz-cd}
\tikzcdset{arrow style=tikz, diagrams={>=stealth}}

\usepackage[most]{tcolorbox}
  {
\begin{tcolorbox}[breakable,
 enhanced jigsaw,
 opacityback=0,
 sharp corners,
 parbox=false,
 boxrule=0mm,
 top=0mm,bottom=0pt,left=0pt,right=0pt,
 boxsep=0pt,
 frame hidden,
 parbox=false,
 before upper=\indent\strut,
 before=\par,after=\par,
 finish={\draw[thick,red] ([xshift=-0.4\textwidth]frame.north)--([xshift=0.4\textwidth]frame.north)--([xshift=-0.4\textwidth]frame.south)--([xshift=0.4\textwidth]frame.south);}
]}
{\end{tcolorbox}}

\allowdisplaybreaks

\DeclareMathOperator{\Hom}{Hom}
\DeclareMathOperator{\End}{End}

\newcommand*{\id}{\textup{id}}

\numberwithin{equation}{section}

\theoremstyle{plain}

\newtheorem{thm}{Theorem}[section]
\newtheorem{lem}[thm]{Lemma}
\newtheorem{prop}[thm]{Proposition}
 \newtheorem{cor}[thm]{Corollary}
\newtheorem{defi}[thm]{Definition}
\newtheorem{LD}[thm]{Lemma and Definition}

\theoremstyle{remark}

\newtheorem{rem}[thm]{Remark}

\numberwithin{equation}{section}
\newcommand\inv{^{-1}}

\newcommand{\ot}{\otimes}

\newcommand\ol{\overline}

\newcommand{\beq}{\begin{equation}}
\newcommand{\eeq}{\end{equation}}

\newcommand{\Bi}{\textup{BiGal}}

\newcommand{\cL}{\mathcal{L}}
\newcommand{\cR}{\mathcal{R}}

\newcommand{\BB}{\overline{B}}
\newcommand{\M}{\mathcal{M}}

\newcommand{\ltau}{\overset\leftharpoonup\tau}
\newcommand{\rtau}{\overset\rightharpoonup\tau}
\newcommand{\lcan}{\overset\leftharpoonup\can}
\newcommand{\rcan}{\overset\rightharpoonup\can}

\newcommand{\one}[1]{{#1}{}_{\scriptscriptstyle{(1)}}}
\newcommand{\two}[1]{{#1}{}_{\scriptscriptstyle{(2)}}}
\newcommand{\three}[1]{{#1}{}_{\scriptscriptstyle{(3)}}}

\newcommand{\tuno}[1]{{#1}{}{}^{\scriptscriptstyle{<1>}}}
\newcommand{\tdue}[1]{{#1}{}{}^{\scriptscriptstyle{<2>}}}

\newcommand{\yi}[1]{{#1}{}{}^{\scriptscriptstyle{[1]}}}
\newcommand{\er}[1]{{#1}{}{}^{\scriptscriptstyle{[2]}}}

\newcommand{\teins}[1]{{#1}{}{}^{\scriptscriptstyle{(1)}}}
\newcommand{\tzwei}[1]{{#1}{}{}^{\scriptscriptstyle{(2)}}}

\newcommand{\ti}{{\tilde{i}}}
\newcommand{\trho}{{\tilde{\rho}}}

\newcommand{\lbiprod}{{>\!\!\!\triangleleft\kern-.33em\cdot}}
\newcommand{\rbiprod}{{\cdot\kern-.33em\triangleright\!\!\!<}}

\newcommand{\z}{{}_{\scriptscriptstyle{(0)}}}
\newcommand{\rz}{{}_{\scriptscriptstyle{[0]}}}
\renewcommand{\o}{{}_{\scriptscriptstyle{(1)}}}
\newcommand{\ro}{{}_{\scriptscriptstyle{[1]}}}
\newcommand{\mo}{{}_{\scriptscriptstyle{(-1)}}}
\newcommand{\rmo}{{}_{\scriptscriptstyle{[-1]}}}
\renewcommand{\t}{{}_{\scriptscriptstyle{(2)}}}

\newcommand{\mt}{{}_{\scriptscriptstyle{(-2)}}}
\newcommand{\rmt}{{}_{\scriptscriptstyle{[-2]}}}
\renewcommand{\th}{{}_{\scriptscriptstyle{(3)}}}

\newcommand{\rmth}{{}_{\scriptscriptstyle{[-3]}}}
\newcommand{\fo}{{}_{\scriptscriptstyle{(4)}}}

\newcommand{\di}{{\diamond_{B}}}
\newcommand\ou[1]{\otimes_{#1}}
\newcommand{\la}{{\triangleright}}

\DeclareMathOperator{\tens}{\otimes}
\newcommand{\CR}{\mathcal{R}}
\newcommand{\CH}{\mathcal{H}}
\newcommand{\CM}{\mathcal{M}}
\newcommand{\CL}{\mathcal{L}}

\newcommand{\can}{{\rm can}}

\newcommand{\op}{{\operatorname{op}}}

\newcommand\co{{\operatorname{co}}}
\newcommand\LComod[1]{{^{#1}}\mathcal M}
\newcommand\RMod[1]{\mathcal M_{#1}}
\newcommand\BiMod[1]{{}_{#1}\mathcal M_{#1}}
\newcommand\cop{{\operatorname{cop}}}

\newcommand\skpr{\varoslash}
\newcommand\askpr{\boxslash}
\newcommand\lcp{\#}
\begin{document}

\author{Xiao Han}
\address[]{\textit{Xiao Han},
Queen Mary University of London.
}
\email{x.h.han@qmul.ac.uk}
\author{Peter Schauenburg}
\address{\textit{Peter Schauenburg}, Université Bourgogne Europe, CNRS, IMB UMR 5584, 21000 Dijon, France}
\email{peter.schauenburg@ube.fr}

\keywords{Hopf algebroid, bialgebroid, quantum group, Hopf Galois extensions}

\title{Cleft Extensions for Hopf Algebroids without Antipodes}

\begin{abstract}
We introduce cleft extensions for Hopf algebroids in the sense of \cite{schau1} and \cite{schau3}. We prove the equivalence between cleft extensions, $\sigma$-twisted crossed products, and Hopf--Galois extensions with the normal basis property, thereby generalizing the theory of cleft extensions for Hopf algebroids developed in \cite{BB} and fitting in with the general theory of Galois and biGalois extensions over Hopf algebroids developed in \cite{HS25}. 

We investigate the Ehresmann Hopf algebroid associated with a cleft extension and show that it is isomorphic to a generalized version of the Connes--Moscovici Hopf algebroid. A special case of the Connes-Moscovici Hopf algebroid, namely the case where the coinvariants of the cleft extension coincide with the base of the Hopf algebroid, is a Drinfeld twist of a Hopf algebroid by a two-cocycle, generalizing \cite{Boehm,HM22,HM23}.
\end{abstract}

\maketitle

\section{Introduction}

In analogy with the relationship between Hopf algebras and groups, Hopf algebroids may be regarded as algebraic or quantum counterparts of groupoids. There are several definitions of Hopf algebroids in the literature. In this paper, we work with Hopf algebroids in the sense of \cite{schau1} and \cite{schau3}. In particular, our Hopf algebroids are not assumed to admit antipodes; rather they fulfill two conditions that together are equivalent to having a bijective antipode in the case of ordinary bialgebras. 

Hopf-Galois extensions are understood as noncommutative-geometric versions of the notion of a principal fiber bundle \cite{schneider}. Namely if a Hopf algebra $H$ is thought of as the function algebra on a quantum group, then an $H$-Galois extension  $N\subset P$ should be thought of as the function algebras $P$ on the total space and $N$ on the base space of a quantum principal fiber bundle with that structure quantum group.

Hopf-Galois extensions over an ordinary Hopf algebra already give rise to Hopf algebroids, namely the Ehresmann Hopf algebroid $L(P,H)$ constructed in \cite{schau4}. In the sense alluded to above it can be interpreted as the function algebra on the gauge groupoid of the extension \cite{HL20}. It was already observed in \cite{schau4} that $P$ has some properties making it similar to an $L(P,H)$-$H$-biGalois extension, thus generalizing \cite{schau5}. However, lacking a general Galois theory for Hopf algebroids this was only an analogy. A full theory of bi-Galois extensions over Hopf algebroids, including an Ehresmann Hopf algebroid construction, was given in \cite{HS25}.

For an ordinary Hopf algebra, cleft extensions are a well-studied particular class of Hopf-Galois extensions \cite{BM,DT, mont, schau7}. Purely algebraically, they are extensions whose properties are defined by maps satisfying suitable equations. Going up, a cleft extension $P$ can be constructed as a crossed product $P=N\#_\sigma H$ by giving a twisted multiplication on $N\ot H$ in terms of a ``weak action" of $H$ on $N$ and a ``cocycle" on $H$ with values in $N$. The weak action and cocycle conditions are algebraic equations on multilinear maps. Going down, a cleft comodule algebra $H$ is defined by a "cleaving" $j\colon H\to P$ subject to suitable equations as well (it is colinear and admits a convolution inverse). Such a map allows to cleave $P$ into a tensor product $N\ot H$ whose structure is that of a crossed product, and which is a Hopf-Galois extension. In terms of noncommutative algebraic geometry, cleft extensions can be interpreted as trivial principal bundles \cite{AFL,BHMS,HRZ,HKMZ,Pflaum}.

The present paper seeks to give a suitable theory of cleft extensions over a Hopf algebroid $\CH$ instead of an ordinary Hopf algebra. We can already remark at this point that such a theory was developed by Böhm and Brzezinski in \cite{BB}, but only for a more restricted class of Hopf algebroids that admit antipodes. We give a detailed comparison in an appendix and use some results from \cite{BB}.

In the general case, the principal problem with the generalization of cleft extensions already arises with the very definition. In the classical case the definition requires a convolution invertible colinear map $\CH\to P$, but for general bialgebroids there is no convolution product that would be applicable to write this definition. This already concerns the definition of Hopf algebroids themselves; in the classical case a bialgebra is a Hopf algebra if the identity is convolution invertible, for a bialgebroid the underlying convolution does not exist. Thus already the most trivial cleft extension over $\CH$, namely $\CH$ itself, does not generalize naively to the Hopf algebroid case. 

The two conditions replacing the existence of a bijective antipode in the definition of a Hopf algebroid are bijectivity of the maps (conventions and notations on the diverse $B$-module structures to be recalled in \Cref{sec2})
\begin{align*}
    \lambda\colon \CH\ou\BB\CH&\to \CH\di\CH&\mu\colon\CH\ou B\CH&\to \CH\di\CH\\
    X\ot Y&\mapsto X\o\ot X\t Y&X\ot Y&\mapsto X\o Y\ot X\t.
    \intertext{In places where Hopf algebra theory uses the antipode, the partial inverses}
    X_+\ot X_-&\mapsfrom X\ot 1& X_{[+]}\ot X_{[-]}&\mapsfrom 1 \ot X
\end{align*}
then make frequent appearances.

The replacement we now propose for the classical notion of cleftness for a right $\CH$-comodule algebra $P$ is to require the existence of colinear maps 
\[
\rho,\trho:\cL\to P
\]
such that $\rho$ is right $B$-linear, $\trho$ is left $B$-linear, and
\[
\trho(X_{+})\rho(X_{-})=\varepsilon(X)1_{P},\qquad
\rho(X_{[+]})\trho(X_{[-]})=\overline{\varepsilon(X)}1_{P}.
\]
Thus we require a sort of convolution invertibility that uses the restricted inverses of the $\mu,\lambda$ maps instead of comultiplication. We will discuss properties of this twisted convolution at length, and in particular prove that it generalizes ordinary convolution invertibility in the classical situation (with $\trho$ replacing the classical cleaving map). For the case $P=\CH$ it should be noted that the existence of such a cleaving is tautological: Both $\rho$ and $\trho$ can be chosen to be the identity. The new  condition thus does not introduce a cleaving independent of the trivial cleaving of $\CH$ itself, but rather postulates a specific way of extending that cleaving to the comodule algebra.

As stated, the condition to be cleft 
is formulated (as in the classical case) as a set of equations (colinearity, mutual "inverse") on linear maps. We will also require the existence of a reverse comodule structure on $P$, which means another map (said comodule structure) and equations.

In case $\CH$ is suitably faithfully flat over its base, Chemla \cite{C20} has shown that all comodule structures are reversible. Under the condition that $P$ is left faithfully flat over the base $\ol B$ of $\CH$, we will see that invertibility of $\rho$ in our twisted sense is equivalent to bijectivity of a certain canonical map $\CH\ou B P\to P\di\CH.$ We will prefer, however, to develop as much as possible of the theory without imposing flatness conditions, to underline the ``explicit" nature of the theory of cleft extensions, where everything is defined by explicit equations, whereas in the general theory of (bi)Galois extensions notably the invariant subalgebra is only given as an equalizer (for a cleft extension it turns out to be also the image of a module projection) and flatness conditions over various subrings are frequently needed to deduce the existence or bijectivity of crucial maps. In our brief summary of biGalois theory above we have summarily swept all such technical problems
under the rug.

With our definition in place, we can prove a complete analog of the classical characterization of cleft extensions known in the Hopf algebra case. Namely, an $\CH$-anti-cleft comodule algebra $P$ is the same thing as an anti-Galois extension with a normal basis $P\cong N\ou B\CH$ and also the same thing as a crossed product comodule algebra $P\cong N\#_\sigma\CH$ with invertible cocycle. For the equivalence between crossed products and comodule algebras with a normal basis we can tap into \cite{BB} except for the crucial link between the Galois condition and invertibility of the cocycle.

Having thus established a theory of cleft extensions, we turn to their biGalois theory in the sense developed in \cite{HS25}. That is to say, we consider the Ehresmann Hopf algebroid $L(N\#_\sigma\CH,\CH)$ of an anti-cleft extension and the inverse of the extension in the groupoid of bi-Galois extensions. All these notions are defined and studied in \cite{HS25} under diverse faithful flatness conditions both over the base of the Hopf algebroid and the coinvariant subalgebra (which is the base of the Ehresmann Hopf algebroid). In our present situation, we can compute them explicitly in terms of the data determining the cleft extension. Notably, the inverse in the groupoid of bi-Galois extensions of a crossed product right comodule algebra with invertible cocycle turns out to be the crossed product left comodule algebra using the inverse cocycle. The Ehresmann algebroid is a triple tensor product $N\#_\sigma\CH\#_\sigma N$ of $\CH$ with two copies of the coinvariant subalgebra of $P$, with a twice twisted multiplication using the action (twice) and the cocycle and its inverse; this is a Hopf algebroid generalization of the so called Connes-Moscovici Hopf algebroid constructed from a crossed product over a Hopf algebra (see \cite{connes-moscovici}, the algebraic construction appears without a cocycle in \cite{Takeuchi77} and with a cocycle in \cite{schau4}).

Given the more explicit nature of cleft extensions as compared with general Galois extensions, we can also establish some of these constructions \emph{without} imposing the flatness conditions needed in the general theory in \cite{HS25}. Up to some subtleties (mostly coming from the fact that subspaces in tensor products defined by equalizers may not coincide with the "obvious" candidate in the absence of flatness conditions) we can establish the same results, notably the construction of the generalized Ehresmann or Connes-Moscovici Hopf algebroid.

A special case arises when the coinvariant subring of the extension coincides with the base of the Hopf algebroid. In the case of ordinary Hopf algebras \cite{schau5} such extensions are sometimes called Hopf Galois \emph{objects}. They can be identified with $\CH$ itself with multiplication twisted by a cocycle, and their Ehresmann Hopf algebroid is $\CH$ with multiplication twisted on both sides with a cocycle and its inverse. We describe this as a special case of the Ehresmann construction and find a cocycle twist as described in \cite{Boehm,HM22,HM23}, except that we can simply do without one condition imposed on cocycles there. We study examples of such cocycle twists arising from considering two crossed products with the same Hopf algebra but different cocycles and actions; in particular this gives examples of twists that do not fulfill the conditions in \cite{Boehm,HM22,HM23}.

\subsection*{Acknowledgements} Xiao Han was supported by the European Union's Horizon 2020 research and innovation program under the Marie Sklodowska-Curie grant agreement No 101027463 at the early stage of the project. Xiao Han was supported by Leverhulme Trust project grant RPG-2024-177 at the later stage of the project. Xiao Han was supported by COST Action CA21109 for Short-Term Scientific Mission. Xiao Han is grateful to the Institut de Mathematiques de Bourgogne
(Dijon) for hospitality.

\section{Preliminaries} \label{sec2}

In this section, we will recall some  definitions and notation concerning Hopf algebroids and the general theory of (bi)Galois extensions. We do this to make the paper more self-contained; the results either come from \cite{HS25} or from the literature cited there, with the exception of \Cref{skewedbijections}.

Let $B$ be an unital algebra over a field $k$. We denote the opposite algebra by $\BB$ and let $B\to \BB$, $b\mapsto\Bar{b}$ for any $b\in B$ be the obvious $k$-algebra antiisomorphism. Define $B^{e}:=B\ot \BB$, so $B$ and $\BB$ are obvious subalgebras of $B^{e}$. Let $M, N$ be $B^{e}$-bimodules. We define
\begin{align*}
    M\di N:=\int_{b} {}_{\Bar{b}}M\ot {}_{b}N:=&M\ot N/\langle \Bar{b}m\ot n-m\ot bn|b\in B, m\in M, n\in N\rangle\\
    M\blacklozenge_{B} N:=\int_{b} M{}_{b}\ot N{}_{\Bar{b}}:=&M\ot N/\langle m\ot n\Bar{b}-mb\ot n|b\in B, m\in M, n\in N\rangle\\
    M\ot_{B} N:=\int_{b} M_{b}\ot {}_{b}N:=&M\ot N/\langle mb\ot n-m\ot bn|b\in B, m\in M, n\in N\rangle\\
    M\ot_{\BB} N:=\int_{b} M_{\Bar{b}}\ot {}_{\Bar{b}}N:=&M\ot N/\langle m\Bar{b}\ot n-m\ot \Bar{b}n|b\in B, m\in M, n\in N\rangle\\
\end{align*}
For convenience, we also define $N\ot^{B}M=\int_{b} {}_{b}N\ot M_{b}$ and  $N\ot^{\BB}M=\int_{b} {}_{\Bar{b}}N\ot M_{\Bar{b}}$. Moreover, we define
\begin{align*}
    \int^{b}M_{\Bar{b}}\ot N_{b}:=\{\sum_{i}m_{i}\ot n_{i}\in M\ot N\quad |\quad m_{i}\Bar{b}\ot n_{i}=m_{i}\ot n_{i}b, \forall b\in B\}.
\end{align*}

The symbol $\int^{b}$ and $\int^{c}$ commute, also $\int_{b}$ and $\int_{c}$ commute. However, in general, the symbol $\int^{b}$ and $\int_{c}$ doesn't commute. For any $\BB$-bimodule $M$ and any $B$-bimodule $N$, we also define
\begin{align*}
    M\times_{B}N:=\int^{a}\int_{b} {_{\Bar{b}}M_{\Bar{a}}}\ot {_bN_a}.
\end{align*}
$M\times_{B}N$ is called the Takeuchi product of $M$ and $N$. If $P$ is a $B^{e}$-bimodule, then $P\times_{B}N$ is a $B$-bimodule with $B$ acting on $P$. Similarly, $M\times_{B}P$ is a $\BB$-bimodule with $\BB$ acting on $P$. If both $M$ and $N$ are $B^{e}$-bimodule, then $M\times N$ is also a $B^{e}$-bimodule. However, the product $\times_{B}$ is neither associative and unital on the category of $B^{e}$-bimodules. For any $M,N, P\in {}_{B^{e}}\M_{B^{e}}$, we can define
\begin{align*}
    M\times_{B}P\times_{B}N:=\int^{a,b}\int_{c,d} {}_{\Bar{c}}M_{\Bar{a}}\ot {}_{c,\Bar{d}}P_{a, \Bar{b}}\ot {}_{d}N_{b},
\end{align*}
where $\int^{a,b}:=\int^{a}\int^{b}$ and $\int_{c,d}:=\int_{c}\int_{d}$. There are maps
\begin{align*}
    &\alpha:(M\times_{B}P)\times_{B} N\to M\times_{B}P\times_{B}N,\quad m\ot p\ot n\mapsto m\ot p\ot n,\\
    &\alpha':M\times_{B}(P\times N)\to M\times_{B}P\times_{B}N,\quad m\ot p\ot n\mapsto m\ot p\ot n.\\
\end{align*}
Notice that neither $\alpha$ nor $\alpha'$ are isomorphisms  in general. In particular, if all $B$-module and $\BB$-module structures are faithfully flat, then $\alpha$ and $\alpha'$ are isomorphisms.

\subsection{Hopf algebroids}

Here we recall the basic definitions (cf. \cite{BW}, \cite{Boehm}). Let $B$ be a unital algebra over a field $k$.
A {\em $B$-ring}   means a unital algebra in the monoidal category ${}_B\CM_B$ of $B$-bimodules. Likewise,  a  {\em $B$-coring} is a coalgebra in ${}_B\CM_B$. Morphisms of $B$-(co)rings are defined to be morphisms of (co)algebras, but in the category ${}_B\CM_B$.

Specifying a unital $B$-ring $\cL$ is equivalent to specifying a unital algebra $\cL$ (over $k$) and an algebra map $\eta:B\to \cL$. Left and right multiplication in $\cL$ pull back to left and right $B$-actions as a bimodule (so $bXc=\eta(b)X\eta(c)$ for all $b,c\in B$ and $X\in \cL$) and the product descends to the product $\mu_B:\cL\tens_B\cL\to \cL$ with $\eta$ the unit map. Conversely, given
$\mu_B$ we can pull back to an associative product on $\cL$ with unit $\eta(1)$.

Now suppose that $s:B\to \cL$ and $t:\BB\to \cL$ are algebra maps with images that commute. Then $\eta(b\tens c)=s(b)t(c)$ is an algebra map $\eta: B^e\to \cL$, where $B^e=B\tens \BB$, and is equivalent to making $\cL$ a $B^e$-ring. The left $B^e$-action part of this is equivalent to a $B$-bimodule structure
\begin{equation}\label{eq:rbgd.bimod}
b.X.c=b\Bar{c}X:= s(b) t(c)X
\end{equation}
for all $b,c\in B$ and $X\in \cL$.

If $\cL$ and $\mathcal R$ are two $B^e$-rings, then the Takeuchi product $\cL\times_B\mathcal R$ is an algebra with multiplication given by $(\ell\otimes r)(\ell'\otimes r')=\ell\ell' \otimes rr'$ for $\ell\ot r,\ell'\ot r'\in \cL\times_B\mathcal R$.

\begin{defi}\label{def:left.bgd} Let $B$ be a unital algebra. A left $B$-bialgebroid (or left bialgebroid over $B$) is an algebra $\cL$ and (`source' and `target') commuting algebra maps $s:B\to \cL$ and $t:\BB\to \cL$ (thus making $\cL$ a $B^e$-ring), such that
\begin{itemize}
\item[(i)]  There are two left $B^{e}$-module maps, the coproduct $\Delta:\cL\to \cL\times_{B} \cL$ and counit $\varepsilon:\cL\to B$ satisfying
\begin{align*}
&\alpha\circ(\Delta\times_{B}\id)\circ\Delta=\alpha'\circ(\id\times_{B}\Delta)\circ\Delta:\cL\to \cL\times_{B}\cL\times_{B}\cL\\    &\varepsilon(X\o)X\t=\overline{\varepsilon(X\t)}X\o=X,
\end{align*}
for any $X\in\cL$, where we use the sumless Sweedler index to denote the coproduct, namely, $\Delta(X)=X\o\ot X\t$.
\item[(ii)] The coproduct is an algebra map. The counit $\varepsilon$ is a left character in the following sense:
\begin{equation*}\varepsilon(1_{\cL})=1_{B}, \quad \varepsilon(X\varepsilon(Y))=\varepsilon(XY)=\varepsilon(X\overline{\varepsilon(Y)})\end{equation*}
for all $X,Y\in \cL$ and $a\in B$.
\end{itemize}
\end{defi}

\begin{rem}
   Condition (i) imply $\cL$ is a $B$-coring with the $B$-bimodule structure given by (\ref{eq:rbgd.bimod}). By condition (ii), we can define an algebra map $\hat{\varepsilon}: \cL\to \operatorname{End}(B)$ by $\hat{\varepsilon}(X)(b):=\varepsilon(X b)$. Moreover, the image of $\alpha\circ(\Delta\times_{B}\id)\circ\Delta=\alpha'\circ(\id\times_{B}\Delta)\circ\Delta$ could be also denoted by the Sweedler index, namely, $\alpha\circ(\Delta\times_{B}\id)\circ\Delta(X)=\alpha'\circ(\id\times_{B}\Delta)\circ\Delta(X)=X\o\ot X\t\ot X\th\in \cL\times_{B}\cL\times_{B}\cL$. Unlike the bialgebra case, the elements $X\o\o\ot X\o\t\ot X\t$, $X\o\ot X\t\o\ot X\t\t$ and $X\o\ot X\t\ot X\th$ are not the same, since they belong to three different vector spaces, even if in many interesting cases both $\alpha$ and $\alpha'$ are isomorphisms.
   Morphisms between left $B$-bialgebroids are $B$-coring maps which are also $B^e$-ring map maps.
\end{rem}


\begin{defi}\label{defHopf}
A left bialgebroid $\cL$ over $B$ is a left Hopf algebroid (\cite{schau1}, Thm and Def 3.5.) if
\[\lambda: \cL\ot_{\BB}\cL\to \cL\di\cL,\quad
    \lambda(X\ot Y)=\one{X}\ot \two{X}Y\]
is invertible.\\
Similarly, A left bialgebroid $\cL$ over $B$ is a anti-left Hopf algebroid if
\[\mu: \cL\ot_{B}\cL\to \cL\di \cL,\quad
    \mu(X\ot Y)=\one{X}Y\ot \two{X}\]
is invertible. A left $B$-bialgebroid is a $B$-Hopf algebroid if it is a left Hopf algebroid and anti-left Hopf algebroid.
\end{defi}
In the following we will always use the balanced tensor product explained as above. If $B=k$ then this reduces to the map $\cL\tens\cL\to \cL\tens\cL$ given by $h\tens g\mapsto h\o\tens h\t g$ which for a usual bialgebra has an inverse, namely $h\tens g\mapsto h\o\tens (Sh\t)g$ if and only if there is an antipode. We adopt the shorthand
\begin{equation}\label{X+-} X_{+}\ot_{\BB}X_{-}:=\lambda^{-1}(X\di 1)\end{equation}
\begin{equation}\label{X[+][-]} X_{[+]}\ot_{B}X_{[-]}:=\mu^{-1}(1\di X).\end{equation}
So for a Hopf algebra $H$, $h_{+}\ot h_{-}=h\o\ot S(h\t)$ and $h_{[-]}\ot h_{[+]}= S^{-1}(h\o)\ot h\t$ for any $h\in H$.
We recall from \cite[Prop.~3.7]{schau1} that for a left Hopf algebroid, and any $X, Y\in \cL$ and $a,a',b,b'\in B$,
\begin{align}
    \one{X_{+}}\di{}\two{X_{+}}X_{-}&=X\di{}1\label{equ. inverse lamda 1};\\
    \one{X}{}_{+}\ot_{\BB}\one{X}{}_{-}\two{X}&=X\ot_{\BB}1\label{equ. inverse lamda 2};\\
    (XY)_{+}\ot_{\BB}(XY)_{-}&=X_{+}Y_{+}\ot_{\BB}Y_{-}X_{-}\label{equ. inverse lamda 3};\\
    1_{+}\ot_{\BB}1_{-}&=1\ot_{\BB}1\label{equ. inverse lamda 4};\\
    \one{X_{+}}\di{}\two{X_{+}}\ot_{\BB}X_{-}&=\one{X}\di{}\two{X}{}_{+}\ot_{\BB}\two{X}{}_{-}\label{equ. inverse lamda 5};\\
    X_{+}\ot\one{X_{-}}\ot{}\two{X_{-}}&=X_{++}\ot X_{-}\ot{}X_{+-}\in\int^{a,b}\int_{c,d}{}_{\Bar{a}}\cL_{\Bar{c}}\ot {}_{\Bar{d}}\cL_{\Bar{b}}\ot {}_{\Bar{c},d}\cL_{b,\Bar{a}}
    \label{equ. inverse lamda 6};\\
    X&=X_{+}\overline{\varepsilon(X_{-})}\label{equ. inverse lamda 7};\\
    X_{+}X_{-}&=\varepsilon(X)\label{equ. inverse lamda 8};\\
    (a\Bar{a'}Xb\Bar{b'})_{+}\ot_{\BB}(a\Bar{a'}Xb\Bar{b'})_{-}&=aX_{+}b\ot_{\BB}b'X_{-}a'\label{equ. inverse lamda 9};\\
    \Bar{b}X_{+}\ot_{\BB}X_{-}&=X_{+}\ot_{\BB}X_{-}\Bar{b}\label{equ. inverse lamda 10}.
\end{align}
Similarly, for an anti-left Hopf algebroid, from \cite{BS} we have
\begin{align}
    \one{X_{[+]}}X_{[-]}\di{}\two{X_{[+]}}&=1\di{}X\label{equ. inverse mu 1};\\
    \two{X}{}_{[+]}\ot_{B}\two{X}{}_{[-]}\one{X}&=X\ot_{B}1\label{equ. inverse mu 2};\\
    (XY)_{[+]}\ot_{B}(XY)_{[-]}&=X_{[+]}Y_{[+]}\ot_{B}Y_{[-]}X_{[-]}\label{equ. inverse mu 3};\\
    1_{[+]}\ot_{B}1_{[-]}&=1\ot_{B}1\label{equ. inverse mu 4};\\
    (\one{X_{[+]}}\di{}\two{X_{[+]}})\ot_{B}X_{[-]}&=(\one{X}{}_{[+]}\di{}\two{X})\ot_{B}\one{X}{}_{[-]}\label{equ. inverse mu 5};\\
    \one{X_{[-]}}\ot{}\two{X_{[-]}}\ot X_{[+]}&=X_{[+][-]}\ot{}X_{[-]}\ot X_{[+][+]}\in \int^{a,b}\int_{c,d}{}_{c,\Bar{d}}\cL_{a,\Bar{b}}\ot {}_{d}\cL_{b}\ot {}_{a}\cL_{c}\label{equ. inverse mu 6};\\
    X&=X_{[+]}\varepsilon(X_{[-]})\label{equ. inverse mu 7};\\
    X_{[+]}X_{[-]}&=\overline{\varepsilon(X)}\label{equ. inverse mu 8};\\
    (a\Bar{a'}Xb\Bar{b'})_{[+]}\ot_{B}(a\Bar{a'}Xb\Bar{b'})_{[-]}&=\Bar{a'}X_{[+]}\Bar{b'}\ot_{B}\Bar{b}X_{[-]}\Bar{a}\label{equ. inverse mu 9};\\
    X_{[+]}\ot_{B}X_{[-]}b&=bX_{[+]}\ot_{B}X_{[-]}\label{equ. inverse mu 10}.
\end{align}

\begin{prop}\label{prop. anti left and left Galois maps}
    If $\cL$ is a left and anti-left Hopf algebroid, then we have
    \begin{align}\label{mixed coass 1}
        X_{[+]}\ot X_{[-]+}\ot X_{[-]-}&=X\t{}_{[+]}\ot X\t{}_{[-]}\ot X\o\in \cL\ot_{B}\cL\ot_{\BB}\cL;\\\label{mixed coass 2}
        X_{+}\ot X_{-[+]}\ot X_{-[-]}&=X\o{}_{+}\ot X\o{}_{-}\ot X\t\in \cL\ot_{\BB}\cL\ot_{B}\cL;\\\label{mixed coass 3}
        X_{[+]+}\ot X_{[-]}\ot X_{[+]-}&=X_{+[+]}\ot X_{+[-]}\ot X_{-}\in \int^{c,d}\int_{a,b} {}_{c, \Bar{d}}\cL_{\Bar{a}, b}\ot {}_{b}\cL_{c}\ot {}_{\Bar{a}}\cL_{\Bar{d}},
    \end{align}
   for any $X\in\cL$.
\end{prop}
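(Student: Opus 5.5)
The plan is to prove each identity by applying an injective map, built from $\lambda$ and $\mu$, to both sides and showing that the images agree. Once both sides are known to live in the same tensor product, it suffices to check equality after applying a bijection.

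For \eqref{mixed coass 1}, I would apply $\id\ot_B\lambda$ to both sides. This is well defined and bijective on $\cL\ot_B(\cL\ot_{\BB}\cL)\to\cL\ot_B(\cL\di\cL)$, since $\lambda$ is left $B$-linear for the action on the first leg.
\begin{itemize}
\item On the left-hand side, \eqref{equ. inverse lamda 1} gives
\[
X_{[+]}\ot X_{[-]+(1)}\ot X_{[-]+(2)}X_{[-]-}=X_{[+]}\ot X_{[-]}\ot 1 .
\]
\item On the right-hand side we obtain $X\t{}_{[+]}\ot X\t{}_{[-](1)}\ot X\t{}_{[-](2)}X\o$.
\item To compare the two, I would further apply $\mu\ot\id$ on the first two legs. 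By \eqref{equ. inverse mu 1}, the left-hand side becomes $1\ot X\ot1$.
\item For the right-hand side, I would use \eqref{equ. inverse mu 6} to rewrite the coproduct of $X_{[-]}$ in terms of $X_{[+][-]}$ and $X_{[+][+]}$. Then, applied to $X\t$, \eqref{equ. inverse mu 1} collapses the first two legs to $1\ot X\t{}_{[+](2)}$, and \eqref{equ. inverse mu 2} collapses the remaining legs, giving $1\ot X\ot 1$.
\end{itemize}
I expect checking that $\mu\ot\id$ is well defined on these Takeuchi-type subspaces to be the fiddly part. If it is not, I would instead precompose by writing $1\di X=\mu(X_{[+]}\ot X_{[-]})$ and applying \eqref{equ. inverse mu 5} directly.

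Identity \eqref{mixed coass 2} is the mirror image of \eqref{mixed coass 1}. I would apply $\id\ot_{\BB}\mu$, then $\lambda\ot\id$, and use \eqref{equ. inverse lamda 1}, \eqref{equ. inverse lamda 2}, \eqref{equ. inverse lamda 5} and \eqref{equ. inverse lamda 6} in place of their $\mu$-counterparts.

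For \eqref{mixed coass 3}, the cleanest route is to apply $\lambda$ to the first and third factors and $\mu$ to the first and second factors. In practice I would check that each side maps to the same element under the composite
\[
Z\ot W\ot V\mapsto Z\o W\ot Z\t V ,
\]
which is an injective map when restricted to the subspace in question.
\begin{itemize}
\item On the left-hand side, apply \eqref{equ. inverse lamda 1} to $X_{[+]}$ to get $X_{[+](1)}X_{[-]}\ot X_{[+](2)}$, and then \eqref{equ. inverse mu 1} to get $1\ot X$.
\item On the right-hand side, apply \eqref{equ. inverse mu 1} to $X_+$ to get $X_{+(1)}\ot X_{+(2)}X_-$, and then \eqref{equ. inverse lamda 1} to get $X\ot 1$.
\end{itemize}
These outputs sit in different legs, so the comparison map must be chosen asymmetrically. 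I would therefore verify the identity by showing that both sides are mapped to
\[
X_{(1)}{}_{[+]}\ot X_{(1)}{}_{[-]}\ot X\t
\]
under $\id\ot\lambda$-type maps, using \eqref{mixed coass 2}. The main obstacle is exactly this step: establishing the injectivity and well-definedness of the comparison map on the Takeuchi-type subspace $\int^{c,d}\int_{a,b}$, where $\alpha,\alpha'$ need not be isomorphisms. I would handle it by working throughout with representatives in the balanced tensor products where $\lambda$ and $\mu$ are genuinely bijective, and only at the end passing to the subspace.
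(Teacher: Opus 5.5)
\textbf{Parts \eqref{mixed coass 1} and \eqref{mixed coass 2}.} Here your plan is essentially the paper's. You apply $\id\ot\lambda$ (resp.\ $\id\ot\mu$), then use \eqref{equ. inverse mu 6} followed by \eqref{equ. inverse mu 2} (resp.\ \eqref{equ. inverse lamda 6} followed by \eqref{equ. inverse lamda 2}). The further application of $\mu\ot\id$ (resp.\ $\lambda\ot\id$) is harmless but redundant. After \eqref{equ. inverse mu 6} and \eqref{equ. inverse mu 2} the right-hand side is already $X_{[+]}\ot X_{[-]}\ot 1$, which is what the left-hand side gave. Also, $\mu$ collapses the first two legs to $1\ot X\t{}_{[+]}$, not to $1\ot X\t{}_{[+](2)}$.

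\textbf{Part \eqref{mixed coass 3}: a genuine gap.} The composite $Z\ot W\ot V\mapsto Z\o W\ot Z\t V$ is not injective. It maps a threefold tensor product to a twofold one, and the Hopf algebra case shows the failure concretely. There both sides of \eqref{mixed coass 3} equal $X\t\ot S^{-1}(X\o)\ot S(X\th)$, and your map sends this element to $\varepsilon(X)1\ot 1$, exactly as it sends $\varepsilon(X)1\ot 1\ot 1$.

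Your computation of the images is also off:
\begin{itemize}
\item \eqref{equ. inverse lamda 1} applied to $X_{[+]}$ turns the left side into $X_{[+]}X_{[-]}\ot 1=\overline{\varepsilon(X)}\ot 1$;
\item \eqref{equ. inverse mu 1} applied to $X_+$ turns the right side into $1\ot X_+X_-=1\ot\varepsilon(X)$.
\end{itemize}
These agree in $\cL\di\cL$, but only because everything except the counit has been discarded.

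The fallback does not close the gap either. It asserts a common image $X\o{}_{[+]}\ot X\o{}_{[-]}\ot X\t$ under unspecified ``$\id\ot\lambda$-type'' maps via \eqref{mixed coass 2}, but that is not an argument. That element is also not the image of either side under any map you name. So the real obstacle is the choice of comparison map, not the Takeuchi-product subtlety you flag.

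\textbf{How to fix it.} Apply a single partial bijection. The paper uses $\mu\ot\id$ on the first two legs.
\begin{itemize}
\item The right side becomes $1\ot X_+\ot X_-$ by \eqref{equ. inverse mu 1} applied to $X_+$.
\item The left side becomes $X_{[+]+}\o X_{[-]}\ot X_{[+]+}\t\ot X_{[+]-}$. By \eqref{equ. inverse lamda 5} applied to $X_{[+]}$ this equals $X_{[+]}\o X_{[-]}\ot X_{[+]}\t{}_{+}\ot X_{[+]}\t{}_{-}$. Then \eqref{equ. inverse mu 1} gives $1\ot X_+\ot X_-$.
\end{itemize}
Your first instinct, applying $\lambda$ to legs one and three alone, also works. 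Both sides go to $X_{[+]}\ot X_{[-]}\ot 1$: the left by \eqref{equ. inverse lamda 1}, the right by \eqref{equ. inverse mu 5} for $X_+$ followed by \eqref{equ. inverse lamda 1}.

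Injectivity of such a single partial application follows from bijectivity of $\mu$ (resp.\ $\lambda$) tensored with an identity over the appropriate base. That map is injective on the ambient balanced tensor product, so the subspace $\int^{c,d}\int_{a,b}$ causes no extra difficulty.
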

\begin{proof}
    The first equality can be proved by comparing the result of $\id_{\cL}\ot \lambda $ on both sides. The second
 equality can be proved by comparing the result of $\id_{\cL}\ot \mu$ on both sides. The third
 equality can be proved by comparing the result of $\mu\ot \id_{\cL}$ on both sides.
\end{proof}

\subsection{Comodules of left bialgebroids}

Let $\CL$ be a left bialgebroid over $B$. In particular $\CL$ is a $B$-coring with respect to the left $B$-module and left $\ol B$-module structure. Thus we can define a comodule of $\CL$ to be the same as a comodule over that coring, that is $P\in {}_B\mathcal M$ with a coassociative coaction $\delta_0\colon P\to\CL\di P$. We will use the notation $\delta_0(p)=p\o\ot p\z$.

If $P$ is a left $\CL$-comodule, then by \cite{HHP}, there is a unique right $B$-module structure on $P$, namely $pb=\varepsilon(p\mo b)p\z$, such that $\operatorname{Im}(\delta_0)\subset \CL\times_BP$. Thus we can define $\delta\colon P\to\CL\times_BP$ and $P$ is a comodule according to the definition in \cite{schau1}. We will not distinguish between $\delta$ and $\delta_0$ in the sequel.

Similarly a right comodule is defined by a coaction $\delta\colon P\to P\di\CL$. It is a $\BB$-bimodule, and we will write $\delta(p)=p\z\ot p\o$.

\begin{defi}\label{def. regular of comodule}
A left $\cL$-comodule $P$ is called  reversible (\cite{C20} with different terminology), if
\[\phi: P\ot_{B} \cL\to \cL\di P, \quad \phi(X\ot p)=p\mo X\ot p\z\]
is invertible. We adopt the shorthand
\begin{align}
    p\rz\ot_{B} p\ro=\phi^{-1}(1\di p).
\end{align}
A right $\CH$-comodule $P$ is called  reversible right $\CH$-comodule \cite{C20}, if
\[\psi: P\ot_{\BB} \CH\to P\di \CH, \quad \psi(p\ot_{\BB} X)=p\z \di p\o X\]
is invertible. We adopt the shorthand
\begin{align}
    p\rz\ot_{\BB} p\rmo=\psi^{-1}(p \di 1).
\end{align}
\end{defi}

If $P$ is a reversible left $\CL$-comodule, then $\ol P$ is a right $\CL$-comodule, where $\ol P$ is $P$ with the $\BB$-bimodule structure defined by the $B$-bimodule structure of $P$, and the coaction
$p\mapsto p\rz\ot p\ro$. Similarly, a reversible right comodule defines a left comodule structure on the same space, with the $B$-bimodule structure defined by the original $\BB$-bimodule structure.
This defines a bijection between reversible left comodule structures and reversible right comodule structures.

If a comodule is reversible, then there are compatibility conditions linking the comodule structures and the (anti)translation maps:
    \begin{align}\label{equ. regular map 7}
        p\rz\z\ot{}p\rz\o\ot p\rmo=&p\z\ot{} p\o{}_{+}\ot p\o{}_{-},\\
    \label{equ. regular map 8}
        p\z\rz\ot p\z\rmo\ot p\o=&p\rz\ot p\rmo{}_{[+]}\ot p\rmo{}_{[-]},
    \end{align}
    for any $p\in P$.

 We recall the definition of cotensor product:
Let $P$ be a right comodule of a left $B$-bialgebroid $\cL$ and $Q$ be a left comodule of $\cL$, then the cotensor product $P\Box^{\cL} Q$ is defined to be the equalizer
\[P\Box^{\cL} Q:=\{p\ot q\in P\di{}Q\quad|\quad p\z\ot p\o\ot q=p\ot q\mo\ot q\z\}\subset P\times_BQ,\]
where the balanced tensor product $P\di{} Q$ above is induced by the $\BB$-bimodule of $P$ and the $B$-bimodule of $Q$, i.e. $\overline{b}p\di{}q=p\di{}bq$.

\subsection{Hopf (bi)Galois extensions}
The theory of bi-Galois extensions was developed in \cite{HS25}. It often has to require \emph{some} conditions of (faithful) flatness of the extensions as well of the Hopf algebroids involved. For this introduction we are "generous" and sometimes impose more restrictions than strictly necessary. One result of the present paper will be to establish some of the results with practically no such restrictions in the cleft case.

\begin{defi}
Given a left bialgebroid over $B$,    a left $\cL$-comodule algebra $P$ is a $B$-ring and a left $\cL$-comodule, such that the coaction is a $B$-ring map. Let $N={}^{co\cL}P$ be the left invariant subalgebra of $P$, $N\subseteq P$ is called a left $\cL$-Galois extension if the left canonical map $\lcan: P\ot_{N} P\to \cL\di{}P$ given by
    \[\lcan(p\ot_{N}q)=p\mo\di{}p\z q,\]
    is bijective. If $N\subseteq P$ is a
left $\cL$-Galois extension, the inverse of $\lcan$ can be determined by the left translation map:
    \begin{align}
        \ltau:=\lcan^{-1}|_{\cL\di{}1}:\cL\to P\ot_{N}P, \qquad X\mapsto \tuno{X}\otimes_{N}\tdue{X}.
    \end{align}
 If $P$ is a faithfully flat  left and right $N$-module, we call $N\subseteq P$ a faithfully flat $\cL$-Galois extension.
\end{defi}

\begin{prop}
    Let $N\subseteq P$ be a faithfully flat left $\cL$-Galois extension, then we have

\begin{align}\label{equ. translation map 1}
  \tuno{X}\mo \di \tuno{X}\z \ot_{N} \tdue{X} &= X\o\di{}\tuno{X\t}\ot_{N}  \tdue{X\t},\\
\label{equ. translation map 2}
~~ \tdue{X}\mo\ot{}\tuno{X}  \ot \tdue{X}\z &= X_{-}\ot{}\tuno{X_{+}}\ot\tdue{X_{+}}\in \int_{b}{}_{\Bar{b}}\cL\ot P\ot_{N} {}_{b}P,\\
\label{equ. translation map 6}
\tuno{(\Bar{a}X  \Bar{b})}\ot_{N}\tdue{(\Bar{a}X  \Bar{b})}
&=\tuno{X}\ot_{N}b\tdue{X}a,
\end{align}
for any $X, Y\in \cL$, $p\in P$, $n\in N$ and $a, b\in B$. 
\end{prop}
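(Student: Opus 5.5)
The plan is to use the standard device for translation-map identities: both sides of each equation should be compared after applying a suitable injective map built from $\lcan$. The point is that $\lcan$ is bijective. Since $P$ is faithfully flat over $N$, tensoring $\lcan$ with identities on either side again gives isomorphisms, and equalities such as $\int_b$-balanced ones can be tested inside the relevant tensor products. Throughout we use the defining property $\lcan(\tuno{X}\ot_N\tdue{X})=X\di 1$, i.e.
\[
\tuno{X}\mo\di\tuno{X}\z\tdue{X}=X\di 1 .
\]
We also use that $\lcan$ is left $P$-linear in the sense $\lcan(pq\ot_N r)=p\mo q\mo\di p\z\lcan(q\ot r)$ for the coaction being an algebra map. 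Finally, we use that $\lcan$ is right $P$-linear.

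For \eqref{equ. translation map 1}, apply $\id_{\cL}\di\lcan$ to both sides, which is injective by flatness. On the left side this gives
\[
\tuno{X}\mo\di\tuno{X}\z\mo\di\tuno{X}\z\z\tdue{X},
\]
and coassociativity of the coaction turns it into $\Delta$ applied to the first leg of $\lcan(\tuno{X}\ot\tdue{X})$, which is $X\o\di X\t\di 1$. On the right side it gives $X\o\di\lcan(\tau(X\t))=X\o\di X\t\di 1$. Hence the two sides agree. A technical point is that the left side must first be shown to be well defined in $\cL\di P\ot_N P$. This follows because $\ltau$ is $N$-balanced and the coaction on $N$ is trivial, and the Takeuchi-product property of $\delta$ is what makes the identification with $\cL\di\cL\di P$ legitimate.

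For \eqref{equ. translation map 6}, use the $B^e$-linearity of $\lcan$: one has $\lcan(\tuno{X}\ot b\tdue{X}a)$. Here the left $B$-action on $P$ is through the $B$-ring structure, and $\delta(b)=b\di 1$ for $b\in B$ (via the Takeuchi condition, $\bar b$-actions on $\cL$ correspond to $b$ acting on $P$). Expanding, this equals $\Bar{a}X\Bar{b}\di 1$, and bijectivity of $\lcan$ concludes.

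The main obstacle is \eqref{equ. translation map 2}, which mixes $\ltau$ with $\lambda^{-1}$ and lives in a balanced space where injectivity of the test map requires care. The plan here is to apply the map $\cL\ot P\ot_N P\to\cL\di\cL\di P$ given by $Y\ot p\ot q\mapsto p\mo\di p\z q\mo\ldots$. More precisely, apply $\lambda$ on the $\cL$-factors after applying $\lcan$ to the $P\ot_N P$ factor, after first coacting on the last leg. On the right side, $\id\ot\lcan$ gives $X_-\ot X_+\di 1$, and $\lambda$-type recombination using \eqref{equ. inverse lamda 1} yields $X\di 1$-type data. On the left side, coassociativity and multiplicativity of the coaction give
\[
\tuno{X}\mo\di\tuno{X}\z\tdue{X}\z\mo\ldots,
\]
and using $\lcan(\ltau(X))=X\di 1$ again collapses it to the same expression. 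Concretely, I would show that both sides have the same image under the composite isomorphism
\[
\cL\ot P\ot_N P\supseteq\int_b\cdots\xrightarrow{\ \id\ot\lcan\ }\cL\ot\cL\di P\xrightarrow{\ \text{flip},\ \lambda\ot\id\ }\cdots
\]
Invertibility of $\lambda$ and the faithful flatness of $P$ over $N$ then ensure injectivity. Checking that this composite is well defined on the $\int_b$-balanced subspace is the delicate bookkeeping step.
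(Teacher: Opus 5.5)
Your proposal is correct and is the standard argument. The paper gives no proof of its own here, since the statement is recalled from \cite{HS25}. Your route is exactly what is needed: test each identity after a bijection built from $\lcan$, and for \eqref{equ. translation map 2} use the composite $(\lambda\ot\id)\circ\mathrm{flip}\circ(\id\ot\lcan)$. That composite sends both sides to $X\ot 1\ot 1$; for the left side this follows from coassociativity, multiplicativity of the coaction into the Takeuchi product, and $\lcan(\ltau(X))=X\di 1$.

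Two small corrections:
\begin{itemize}
\item $\int_b{}_{\Bar{b}}\cL\ot P\ot_N{}_bP$ is a quotient of $\cL\ot P\ot_N P$, not a subspace, so your test map must descend to this quotient rather than be restricted to it.
\item Faithful flatness plays no role in the injectivity. The maps $\id\ot\lcan$, the flip and $\lambda\ot\id$ are already isomorphisms of the relevant balanced tensor products, because $\lcan(p\ot bq)=p\mo\Bar{b}\di p\z q$ by the Takeuchi property and $\lambda(\Bar{c}X\ot Y)=X\o\ot\Bar{c}X\t Y$.
\end{itemize}
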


\begin{defi}
Given a left $B$-bialgebroid $\CH$,  a right $\CH$-comodule algebra $P$ is a $\BB$-ring and a right $\CH$-comodule, such that the coaction is a $\BB$-ring map. Let $N:=P{}^{co\CH}$ be the right invariant subalgebra of $P$, $N\subseteq P$ is called a anti-right $\CH$-Galois extension if the anti-right canonical map $\rcan: P\ot_{N} P\to P\di{}\CH$ given by
    \[\rcan(p\ot_{N}q)=p\z q\di{}p\o ,\]
    is bijective. If $N\subseteq P$ is a
anti-right $\CH$-Galois extension, the inverse of $\rcan$ can be determined by the anti-right translation map:
    \begin{align}
       \rtau:=(\rcan)^{-1}|_{1\di{}\CH}:\CH\to P\ot_{N}P, \qquad X\mapsto \yi{X}\otimes_{N}\er{X},
    \end{align}
    namely $\rcan\inv(p\ot X)=\yi X\ot \er Xp$. If $P$ is a faithfully flat left and right $N$-module, we call $N\subseteq P$ a faithfully flat anti $\CH$-Galois extension.
\end{defi}

\begin{prop}\label{prop. left Hopf Galois extension}
    Let $N\subseteq P$ be an anti-right $\CH$-Galois extension, then we have

\begin{align}\label{equ. anti translation map 1}
  \yi{X}\z \ot_{N} \er{X} \ot{} \yi{X}\o &= \yi{X\o}\ot_{N}\er{X\o}\ot{}X\t\in\int_{b}{}_{\Bar{b}}P\ot_{N}P\ot {}_{b}\CH,\\
\label{equ. anti translation map 2}
~~ \yi{X}\ot_{N}\er{X}\z\di{}\er{X}\o &= \yi{X_{[+]}}\ot_{N}\er{X_{[+]}}\di{}X_{[-]},\\
\label{equ. anti translation map 3}
\yi{X}\z \er{X}\di{}\yi{X}\o &= 1\di{}X,\\
\label{equ. anti translation map 4}
    \yi{p\o}\ot_{N}\er{p\o}p\z&=p\ot_{N}1,\\
 \label{equ. anti translation map 4.5}
    mX\yi{}\ot_{N}X\er{}=&
    X\yi{}\ot_{N}X\er{}m,\\   
\label{equ. anti translation map 5}
\yi{(aX b )}\ot_{N}\er{(aXb )}
&=\yi{X}\ot_{N}\Bar{b}\er{X}\Bar{a},\\
\label{equ. anti translation map 6}
\yi{(\Bar{a}X  \Bar{b})}\ot_{N}\er{(\Bar{a}X  \Bar{b})}
&=\Bar{a}\yi{X}\Bar{b}\ot_{N}\er{X},\\
\label{equ. anti translation map 6.5}
\yi{X}\er{X}
&=\overline{\varepsilon(X)},\\
\label{equ. anti translation map 7}
\yi{(XY)}\ot_{N}\er{(XY)}&=\yi{X}\yi{Y}\ot_{N}\er{Y}\er{X},\\
\label{equ. anti translation map 7.5}
\yi{X_{[+]}}\ot_{N} \yi{X_{[-]}}\ot_{N} \er{X_{[-]}}\er{X_{[+]}}&=\yi{X}\ot_{N}\er{X}\ot_{N} 1,
\end{align}
for any $X, Y\in \cL$, $p\in P$, $m\in N$ and $a, b\in B$. If $P$ is a  reversible right $\CH$-comodule, then we have
\begin{align}
    \label{equ. regular anti translation map 1}
    p\rz \yi{p\rmo}\ot_{N} \er{p\rmo}=&1\ot_{N}p,\\
    \label{equ. regular map 3}
        (pq)\rz\ot_{\BB}(pq)\rmo=& p\rz q\rz\ot_{\BB} q\rmo p\rmo\\
    \label{equ. regular anti translation map 2}
    \yi{X}\ot_{N}\er{X}\rz\di \er{X}\rmo=&\yi{X\t}\ot_{N}\er{X\t}\di X\o,\\
    \label{equ. regular anti translation map 3}
    \yi{X}\rz\ot_{N}\er{X}\ot{} \yi{X}\rmo=&\yi{X_{+}}\ot_{N}\er{X_{+}}\ot{} X_{-}\in \int_{b}P_{\Bar{b}}\ot_{N}P\ot {}_{\Bar{b}}\cL,\\
    \label{equ. regular anti translation map 4}
    \yi{X_{+}} \yi{X_{-}}\ot \er{X_{-}}\ot \er{X_{+}}=&1\ot\yi{X}\ot \er{X}.
\end{align}
\end{prop}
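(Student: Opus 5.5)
The strategy is the standard one for Galois extensions: every identity in the list is an equation in a space from which $\rcan$ (or $\rcan\otimes\id$, $\id\otimes\rcan$, and so on) is injective. So I will prove each identity by applying the relevant canonical map to both sides and using only three ingredients: the defining property $\rcan(\yi X\ot_N \er X)=1\di X$, the fact that the coaction is a $\BB$-ring map, and the Hopf algebroid identities \eqref{equ. inverse mu 1}--\eqref{equ. inverse mu 10} for $\mu^{-1}$.

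The elementary identities come first.
\begin{itemize}
\item \eqref{equ. anti translation map 3} is just $\rcan\circ\rtau(X)=1\di X$ written out.
\item \eqref{equ. anti translation map 6.5} follows by applying $\id\di\varepsilon$ to it, using that the coaction is counital and that $N$-balancing is respected.
\item \eqref{equ. anti translation map 4} follows because $\rcan(p\ot_N 1)=p\z\di p\o$. Right $\CH$-linearity of $\rcan^{-1}$ in the form $\rcan^{-1}(p\di X)=\yi X\ot\er X p$ then gives $\rcan^{-1}(p\z\di p\o)=\yi{p\o}\ot \er{p\o}p\z$.
\item The $B$-linearity statements \eqref{equ. anti translation map 5} and \eqref{equ. anti translation map 6} follow by applying $\rcan$ and using how $a,\Bar a$ act on $P$ through $\delta$.
\item \eqref{equ. anti translation map 4.5} follows because $\rcan(mp\ot q)$ and $\rcan(p\ot qm)$ agree for $m\in N$, once we use $\rcan(\yi X\ot\er X m)=m\di X$ and that $m$ is coinvariant.
\end{itemize}

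For the multiplicativity \eqref{equ. anti translation map 7}, apply $\rcan$ to $\yi X\yi Y\ot\er Y\er X$. This gives $\yi X\z(\yi Y\z\er Y)\er X\di \yi X\o\yi Y\o$, and using \eqref{equ. anti translation map 3} for $Y$ first and then for $X$ it equals $1\di XY$. For \eqref{equ. anti translation map 1}, apply $\rcan\ot\id$: the left side becomes $\yi X\z\z\er X\di\yi X\z\o\ot\yi X\o$, which by coassociativity and \eqref{equ. anti translation map 3} is $1\di X\o\ot X\t$, and this is also the image of the right side. The main technical care is that the target space $\int_b$ is one on which $\rcan\ot\id$ is well defined and injective. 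I would handle this by noting that $\rcan\ot_{\BB}\id$ is a bijection between the appropriate balanced tensor products.

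For \eqref{equ. anti translation map 2}, apply $\id\ot\rcan$ in the sense of the map $p\ot q\ot Z\mapsto \dots$. More concretely, I apply $\rcan\ot\id$ to both sides viewed in $P\ot_N P\di\CH$. The left side gives $\yi X\z\er X\z\di \yi X\o\ot\er X\o$. The right side gives $1\di X_{[+]}\ot X_{[-]}$. Equality then reduces, via multiplicativity of $\delta$ and \eqref{equ. anti translation map 3}, to checking that these agree after applying $\mu$. Indeed $\mu$ of the left side is $\yi X\o\er X\o\ot\dots$, i.e. $(\yi X\z\er X)\o$-type expressions, and collapses to $1\di X$ by \eqref{equ. anti translation map 3}. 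By \eqref{equ. inverse mu 1}, $\mu$ of the right side is also $1\di X$. I expect this step to be the main obstacle, because it mixes $\rcan$ and $\mu$ and one must check that the composite $(\id\ot\mu)\circ(\rcan\ot\id)$ is injective on the correct balanced space. The identity \eqref{equ. anti translation map 7.5} then follows from \eqref{equ. anti translation map 2} by applying $\id\ot\rcan^{-1}$ to the last two legs and then using \eqref{equ. anti translation map 3}, or equivalently by applying $\rcan$ to the outer factors.

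For the reversible case, I use $\psi^{-1}$ and \eqref{equ. regular map 7}--\eqref{equ. regular map 8}.
\begin{itemize}
\item \eqref{equ. regular map 3} holds because $\psi$ of the right side is computed with the ring-map property of $\delta$ to be $pq\di1$.
\item \eqref{equ. regular anti translation map 1} follows by applying $\rcan$: using \eqref{equ. anti translation map 3} it becomes $p\rz\z\di p\rz\o p\rmo=p\di 1$, which is the definition of $\psi^{-1}$.
\item \eqref{equ. regular anti translation map 3} is obtained by applying $\psi$ in the first and third legs together with \eqref{equ. anti translation map 1} and \eqref{equ. inverse lamda 1}, where $\lambda$ now replaces $\mu$.
\item \eqref{equ. regular anti translation map 2} is dual to \eqref{equ. regular anti translation map 3}, using \eqref{equ. anti translation map 2} and \eqref{equ. regular map 8}.
\item \eqref{equ. regular anti translation map 4} follows from \eqref{equ. regular anti translation map 3} combined with \eqref{equ. regular anti translation map 1}, applied to $p=\yi X$.
\end{itemize}
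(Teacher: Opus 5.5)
The paper does not prove this proposition: it appears in the preliminaries and is quoted from \cite{HS25}. So there is no proof in the paper to compare against. Your strategy is the standard one: verify each identity after applying a bijection built from $\rcan$, $\psi$, $\mu$ or $\lambda$. It does go through. What needs repair are details, not ideas.

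\textbf{The step you flagged in \eqref{equ. anti translation map 2}.} The injectivity you worry about is automatic.
\begin{itemize}
\item Since $\delta(p)\in P\times_B\CH$, one has $\rcan(p\ot\bar b q)=p\z q\ot p\o b$.
\item Hence $\rcan\ot\id$ is the isomorphism $\rcan\ot_B\CH$ from $P\ot_N(P\di\CH)$ onto $P\di(\CH\ou B\CH)$.
\item That target is exactly the domain of the bijection $\id_P\ot\mu$.
\item Both sides are sent to $1\ot 1\ot X$. For the left side this uses coassociativity, multiplicativity of $\delta$ and \eqref{equ. anti translation map 3}.
\end{itemize}

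\textbf{Wrong citations.}
\begin{itemize}
\item For \eqref{equ. anti translation map 7.5}: after applying $\id\ot_N\rcan^{-1}$ to the last two legs of \eqref{equ. anti translation map 2}, the identity you need is \eqref{equ. anti translation map 4}, i.e.\ $\rcan^{-1}(q\z\ot q\o)=q\ot 1$. It is not \eqref{equ. anti translation map 3}.
\item For \eqref{equ. regular anti translation map 2}: \eqref{equ. regular map 8} is not the right input. Instead apply the bijection $\id_P\ot_N\psi$ to the last two legs ($\psi$ is left $N$-linear). The left side becomes $\yi X\ot\er X\ot 1$. By \eqref{equ. anti translation map 2} the right side becomes $\yi{X\t{}_{[+]}}\ot\er{X\t{}_{[+]}}\ot X\t{}_{[-]}X\o$. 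This equals $\yi X\ot\er X\ot 1$ by \eqref{equ. inverse mu 2}.
\item For \eqref{equ. regular anti translation map 3}: after reordering legs, the map you apply is $\psi\ot_N P$. It is bijective because $\psi$ is right $N$-linear.
\end{itemize}

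\textbf{Well-definedness.} Several expressions make sense only because of earlier items, and you should say so.
\begin{itemize}
\item The right side of \eqref{equ. anti translation map 7} is defined because of \eqref{equ. anti translation map 4.5}.
\item The right side of \eqref{equ. regular map 3} is defined because $\bar b q\rz\ot q\rmo=q\rz\ot q\rmo\bar b$. To check this, apply $\psi$: both sides give $q\ot\bar b$.
\item The map $p\ot q\ot Y\mapsto p\yi Y\ot\er Y\ot q$, used for \eqref{equ. regular anti translation map 4}, is defined thanks to \eqref{equ. anti translation map 6} and \eqref{equ. anti translation map 4.5}.
\end{itemize}

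\textbf{Two misstatements.}
\begin{itemize}
\item In your argument for \eqref{equ. anti translation map 4.5}, the claim $\rcan(mp\ot q)=\rcan(p\ot qm)$ is false in general. It holds for $p\ot q=\yi X\ot\er X$, where both sides equal $m\ot X$. Left multiplication by $m$ on $P\di\CH$ is defined because $N$ commutes with $\BB$.
\item In \eqref{equ. anti translation map 4} you mean right $P$-linearity of $\rcan^{-1}$, not right $\CH$-linearity.
\end{itemize}
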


\begin{rem}\label{hopf module adjunction}
    For any right $\CH$-comodule algebra $P$ and $N\subset P^{\co\CH}$ we have a pair of adjoint functors
    \begin{align*}
        \mathcal M^{\CH}_P&\rightleftarrows \RMod N\\
        \Gamma&\mapsto \Gamma^{\co\CH}\\
        M\ou NP&\mapsfrom M. 
    \end{align*}
    The unit and counit of adjunction are given by
    \begin{align*}
        M\ni m&\mapsto m\ot 1\in(M\ou NP)^{\co\CH}\\
        \Gamma^{\co\CH}\ou NP\ni \gamma\ot p&\mapsto \gamma p\in \Gamma.
    \end{align*}
   
    If $\CH$ is faithfully flat as a left module over both bases, and $P$ is a left faithfully flat $\CH$-anti-Galois extension of $N=P^{\co\CH}$ the adjunction above is an equivalence by \cite[Corollary 3.27]{HS25}.
\end{rem}

\begin{defi}\label{def. Hopf biGalois extensions}
    Let $B,N$ be two algebras. 
    Let $\cL$ be a $B$-Hopf algebroid  and let $\CH$ an $N$-Hopf algebroid. 
    
    A $\cL$-$\CH$-Hopf biGalois extension is an algebra $P$,\ such that $\ol N\subseteq P$ is a 
        left $\cL$-Galois extension and $B\subseteq P$ is an  anti-right $\CH$-Galois extension, in such a way that the two comodule structures make $P$ a $\cL$-$\CH$-bicomodule. 

    We denote by $\Bi(\cL, \CH)$ the collection of $\cL$-$\CH$-Hopf biGalois extensions that are faithfully flat as left and right modules both over $B$ and $\ol N$. 
\end{defi}

\begin{prop}\label{bigalois imply equivalence}
 If $P\in\Bi(\cL,\CH)$, then we have a monoidal category equivalence
    \[\LComod{\CH}\ni V\mapsto P\Box^{\CH}V\in\LComod{\cL}.\]
\end{prop}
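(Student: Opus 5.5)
The proof has two parts: show that $V\mapsto P\Box^{\CH}V$ is a monoidal functor into left $\cL$-comodules, and then show it is an equivalence by producing an explicit quasi-inverse.

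\textbf{The functor and its monoidal structure.} For a left $\CH$-comodule $V$, I equip $P\Box^{\CH}V\subset P\times_N V$ with the left $\cL$-coaction $p\ot v\mapsto p\mo\ot p\z\ot v$. Since $P$ is an $\cL$-$\CH$-bicomodule, this lands again in the cotensor product. This needs the map
\[
(\cL\di P)\Box^{\CH}V\to \cL\di(P\Box^{\CH}V)
\]
to be bijective, which follows because $\cL$ is flat as a right module over the relevant base; here I use the faithful flatness in the definition of $\Bi(\cL,\CH)$, exactly as in \cite{HS25}.

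The monoidal structure is
\[
(P\Box^{\CH}V)\di(P\Box^{\CH}W)\to P\Box^{\CH}(V\di W),\qquad (p\ot v)\ot(q\ot w)\mapsto pq\ot v\ot w .
\]
That it is well defined and $\cL$-colinear follows because both coactions on $P$ are algebra maps. The unit map $B\to P\Box^{\CH}N$ is given by $b\mapsto b\ot 1$.

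\textbf{Invertibility of the structure maps.} To see that the monoidal structure map is bijective, I use the anti-right translation map. An inverse is
\[
p\ot v\ot w\mapsto p\z\yi{p\o}\ot \dots
\]
More conceptually, I identify $P\Box^{\CH}V$ with the coinvariants of the Hopf module $P\ou{}V$. Then the equivalence of \Cref{hopf module adjunction} (faithfully flat anti-Galois) gives $(P\Box^{\CH}V)\ou{B}P\cong P\di V$. Combining this with \eqref{equ. anti translation map 4} and \eqref{equ. anti translation map 7} yields bijectivity.

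\textbf{Equivalence via the co-Galois object.} This is the step I expect to be the main obstacle. The plan is to construct the inverse biGalois object $P^{-1}$, which is an $\CH$-$\cL$-biGalois extension, realised as $P^{-1}=\{x\in P^{\op}\text{-side}\}$ via the left translation map $\ltau$, as in \cite{HS25}. I then show
\[
P^{-1}\Box^{\cL}(P\Box^{\CH}V)\cong (P^{-1}\Box^{\cL}P)\Box^{\CH}V\cong \CH\Box^{\CH}V\cong V .
\]
The first isomorphism is associativity of cotensor products, which needs flatness; the faithful flatness over $B$ and $\ol N$ provides it. The second isomorphism $P^{-1}\Box^{\cL}P\cong\CH$ is obtained from the Galois conditions, by restricting $\lcan$ and $\rcan$ and using \eqref{equ. translation map 1} and \eqref{equ. translation map 2}.

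The symmetric argument gives $P\Box^{\CH}P^{-1}\cong\cL$. Hence the two cotensor functors are mutually quasi-inverse, and the monoidal equivalence follows. Alternatively, and more simply, one can cite \cite{HS25}, where this proposition is established; the verification above is only a sketch of that argument.
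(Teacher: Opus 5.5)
The paper gives no proof of this proposition. It is one of the results recalled in the preliminaries from \cite{HS25}, so your closing suggestion to cite \cite{HS25} is exactly the paper's route.

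If your sketch is meant as a proof in its own right, it has real gaps.
The faithful flatness built into $\Bi(\cL,\CH)$ is a condition on $P$ (over $B$ and $\ol N$), not on $\cL$ or $\CH$. So the flatness of $\cL$ you invoke to get $\cL\di(P\Box^{\CH}V)\cong(\cL\di P)\Box^{\CH}V$ is not among the hypotheses, even though it is needed just to define the $\cL$-coaction on $P\Box^{\CH}V$. It would have to be derived (e.g.\ from bijectivity of $\lcan$ together with faithful flatness of $P$) or assumed. Likewise, \Cref{hopf module adjunction} is an equivalence only when $\CH$ is faithfully flat over both bases. Identifying $P\Box^{\CH}V$ with the coinvariants of a Hopf module also uses a reverse comodule structure on $V$, i.e.\ reversibility, which again is automatic only under flatness assumptions on $\CH$. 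On top of that, your inverse of the monoidal structure map is left as ``$\dots$'', and its bijectivity is asserted rather than verified.

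The equivalence step is only a plan. $P^{-1}$ is not actually constructed; in \cite{HS25}, and later in this paper, it is $(\CH\ou{\ol B}P)^{\co\CH}$. Everything that carries the weight is deferred:
that $P^{-1}$ is again a biGalois extension with the required faithful flatness;
that the iterated cotensor products associate for an arbitrary comodule $V$;
that $P^{-1}\Box^{\cL}P\cong\CH$;
and that $P\Box^{\CH}P^{-1}\cong\cL$, which needs either $(P^{-1})^{-1}\cong P$ or the identification of $\cL$ with the Ehresmann Hopf algebroid $L(P,\CH)$.
So the real justification in your proposal is the citation, which matches the paper. The sketch should be presented as a heuristic outline, not as a proof.
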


\begin{LD}\label{lem. bigalois given by ehresmann groupoid} 
     Let $\CH$ be a  $B$-Hopf algebroid which is faithfully flat as left $B$ and $\ol B$-module. Let $N\subseteq P$ be a faithfully flat anti-right $\CH$-Hopf Galois extension. The  Ehresmann Hopf algebroid $L(P,\CH):=(P\ot_{\BB}P^{op})^{co\CH}$ is a Hopf algebroid over $N$ with the $N^e$-ring structure:
     \[s(n)=n\ot_{\overline{B}}1,\quad t(n)=1\ot_{\overline{B}}n,\quad (p\ot_{\overline{B}}q)(p'\ot_{\overline{B}}q')=pp'\ot_{\overline{B}}q'q,\]
    for any $p\ot_{\BB}q, p'\ot_{\BB}q'\in L(P, \CH)$.
    And the $N$-coring structure:
    \[\Delta(p\ot_{\BB} q)=p\ot_{\BB}\yi{q\o}\ot_{N}\er{q\o}\ot_{\BB}\,q\z,\qquad \varepsilon(p\ot q)=pq.\]
   Moreover, $P\in\Bi(L(P,\CH),\CH)$ with the left coaction $\delta:P\to L(P, \CH)\diamond_{N}P$ given by
    \[\delta(p)=p\rz\ot p\rmo\yi{}\ot p\rmo\er{}.\]  
\end{LD}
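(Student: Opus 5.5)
This Lemma and Definition bundles three claims about $L(P,\CH)=(P\ot_{\BB}P^{op})^{\co\CH}$. First, the stated structure maps are well defined and land where they should. Second, they make $L(P,\CH)$ a Hopf algebroid over $N$. Third, $P$ carries the stated left coaction and lies in $\Bi(L(P,\CH),\CH)$. I will take the coaction on $P\ot_{\BB}P^{op}$ to be the diagonal one, $p\ot q\mapsto p\z\ot q\rz\ot p\o q\rmo$, using the reversed structure on the second factor. I would take the three claims in this order.

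\textbf{Well-definedness of the structure maps.} I would use faithful flatness of $\CH$ over $B$ and $\BB$ and of $P$ over $N$ throughout. Its role is to identify coinvariants of iterated tensor products with iterated tensor products of coinvariants. This lets us write, for example,
\[
L(P,\CH)\ot_N L(P,\CH)\cong (P\ot_{\BB}P\ot_{\BB}P\ot_{\BB}P)^{\co}
\]
in a controlled way, and in particular forces the maps $\alpha,\alpha'$ to be isomorphisms. The algebra structure is then clear: coinvariants form a subalgebra because the coaction is a $\BB$-ring map and \eqref{equ. regular map 3} holds. Next, $s(n)$ and $t(n)$ are coinvariant because $n\in P^{\co\CH}$. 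For the coproduct, $\yi{q\o}\ot_N\er{q\o}$ is well defined over the $\BB$-balancing by \eqref{equ. anti translation map 5}--\eqref{equ. anti translation map 6}. The image lands in the Takeuchi product $L\times_N L$; to see this I would use \eqref{equ. anti translation map 4.5}, together with \eqref{equ. anti translation map 1} and \eqref{equ. regular anti translation map 2} to show that each tensor factor pair is coinvariant. The counit $\varepsilon(p\ot q)=pq$ lands in $N$ because the element is coinvariant and \eqref{equ. anti translation map 3} applies.

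\textbf{Bialgebroid axioms.} Coassociativity follows from \eqref{equ. anti translation map 1}. The counit laws follow from \eqref{equ. anti translation map 4} and \eqref{equ. anti translation map 6.5}. Multiplicativity of $\Delta$ follows from \eqref{equ. anti translation map 7}. The character property of $\varepsilon$ is a direct computation.

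\textbf{Hopf property.} I expect the main obstacle to be bijectivity of $\lambda$ and $\mu$ for $L(P,\CH)$ over $N$. I would write down candidate inverses explicitly, modeled on the Hopf algebra case of \cite{schau4}. For $\lambda$ I would try
\[
(p\ot q)_+\ot(p\ot q)_-=p\ot \yi{q\rmo}\ot \er{q\rmo}\ot q\rz,
\]
suitably interpreted, and for $\mu$ the analogous formula built from $(\cdot)_{[+]},(\cdot)_{[-]}$ via \eqref{equ. anti translation map 2}. Checking that these maps are well defined on coinvariants needs faithful flatness, and the inverse identities use \eqref{equ. regular anti translation map 1}--\eqref{equ. regular anti translation map 4}. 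As a fallback I would argue categorically: by \Cref{hopf module adjunction}, $\mathcal M^{\CH}_P\simeq\RMod N$. Under this equivalence, $\lambda$ and $\mu$ become the coinvariant parts of isomorphisms of relative Hopf modules built from $\rcan$, so they are bijective. I would prefer the explicit formulas where possible.

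\textbf{The biGalois claim.} The coaction $\delta(p)$ is coinvariant in its first two legs by \eqref{equ. regular anti translation map 3}, and it is coassociative and counital by \eqref{equ. regular anti translation map 1} and \eqref{equ. anti translation map 1}. It commutes with the right $\CH$-coaction by \eqref{equ. regular anti translation map 2}. It remains to show that the left canonical map $P\ot_B P\to L\di_N P$ is bijective. Here I would note that $\ol B$ is the coinvariant subalgebra for $\delta$ (this uses faithful flatness). I would then build the inverse as $p\ot q\ot r\mapsto p\ot qr$ restricted to coinvariants, and verify it using \eqref{equ. regular anti translation map 1}. Faithful flatness of $P$ over $B$ and $N$ is part of the hypotheses, so this gives $P\in\Bi(L(P,\CH),\CH)$.
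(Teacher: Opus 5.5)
The proposal has two genuine gaps: the coaction defining $L(P,\CH)$ is the wrong one, and the Hopf property is never actually argued.

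\textbf{The coaction.} The coaction you put on $P\ou{\BB}P^{\op}$ is not the one that defines $L(P,\CH)$. The coinvariants are taken for the ordinary diagonal coaction $p\ot q\mapsto p\z\ot q\z\ot p\o q\o$, with no reversal on the second factor. This is the convention behind $(Q\ou{\ol B}P)^{\co\CH}\cong(P\Box^{\CH}\ol Q)^{\op}$. It is also the one for which the elements $n\trho(X_+)\ot\rho(X_-)m$ in the proof of \Cref{Ehresmann in the cleft case} are coinvariant.

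For an ordinary Hopf algebra, your map $p\ot q\mapsto p\z\ot q\rz\ot p\o q\rmo$ becomes $p\z\ot q\z\ot p\o S(q\o)$. This is in general not coassociative, and its coinvariants are wrong. Take $P=\CH=kG$ with $G$ nonabelian. Your coinvariants are spanned by the $g\ot g$, and this subspace fails everything:
\begin{itemize}
\item it is not closed under $(g\ot g)(h\ot h)=gh\ot hg$;
\item $\varepsilon(g\ot g)=g^2\notin k$;
\item $\Delta(g\ot g)=(g\ot g)\ot(g^{-1}\ot g)$ leaves it.
\end{itemize}
The correct $L(kG,kG)$ is spanned by the $g\ot g^{-1}$. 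So your well-definedness checks are aimed at the wrong subspace.

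With the right coaction, the checks go differently:
\begin{itemize}
\item Coinvariants are closed under products because of the nesting $p\o p'\o q'\o q\o$, not because the coaction is multiplicative on $P\ot P^{\op}$ (it is not).
\item $pq\in N$ follows immediately from coinvariance.
\item Coinvariance of $p\ot\yi{q\o}$ comes from \eqref{equ. anti translation map 1}, and coinvariance of $\er{q\o}\ot q\z$ from \eqref{equ. anti translation map 2} together with \eqref{equ. inverse mu 2}. Neither uses the reversed-structure identities \eqref{equ. regular map 3} or \eqref{equ. regular anti translation map 2}.
\end{itemize}

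\textbf{The Hopf property.} You rightly identify this as the core, but your candidate for $\lambda^{-1}$ fails. In the $kG$ example it gives $(g\ot g)\ot(g^{-1}\ot g^{-1})$, which is not even in $L\ot L$. The correct value is $(g\ot g^{-1})_+\ot(g\ot g^{-1})_-=(g\ot g^{-1})\ot(g^{-1}\ot g)$. Your fallback sentence names no mechanism.

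The paper takes this result from \cite{HS25} and reruns the argument in the proof of \Cref{Ehresmann in the cleft case}. It goes through the counit $\kappa\colon L\diamond_N P\to P\ou{\BB}P$, $(p\ot q)\ot r\mapsto p\ot qr$, of the adjunction in \Cref{hopf module adjunction}. This counit is bijective by the Hopf module equivalence, which is where faithful flatness of $\CH$, and of $P$ over $N$, is used. The argument then runs as follows.
\begin{itemize}
\item One sets $\delta(p)=\kappa^{-1}(p\ot 1)$.
\item The universal property $\Hom^{\CH}(P,\mathcal G\diamond_N P)\cong\Hom_N(L,\mathcal G)$ produces $\Delta$ without hand-checking coinvariance inside iterated tensor products.
\item The left Galois map is $\kappa^{-1}$, so the Galois property is automatic rather than a separate verification.
\item A commutative diagram exhibits $\lambda\diamond_N P$ as a composite of bijections built from $\lcan$. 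Since tensoring with $P$ over $N$ reflects isomorphisms (faithful flatness), $\lambda$ is bijective.
\item Anti-left Hopfness is the mirror argument.
\end{itemize}

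Two smaller corrections. Faithful flatness of $P$ over $\ol B$, which membership in $\Bi(L(P,\CH),\CH)$ requires, is not among the hypotheses as you state. And the left canonical map is defined on $P\ou{\BB}P$, not $P\ou{B}P$.
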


\subsection{Characterizations of colinear maps}\label{skewedbijections}
In this subsection we collect for later use some exotic Hopf algebroid-specific variants of the following well-known ubiquitous tool:
\begin{rem}\label{free comodule bijection}
    Let $C$ be a $B$-coring, $N$ a right $B$-module, and $M$ a right $C$-comodule. Then 
    \begin{align*}
        \Hom_{-B}(M,N)&\cong \Hom^C(M,N\ou BC)\\
        f&\mapsto (f\ou BC)\circ\delta\\
        (N\ou B\varepsilon)\circ F&\mapsfrom F.
    \end{align*}
\end{rem}
While this is of course just part of spiel of the standard pair of adjoint functors between right $B$-modules and right $C$-comodules, the other versions we need are either variants of it tailored to our needs or merely inspired by this and in need of separate proof.

\begin{prop}\label{skew bijection for colinear maps}
Let $\cL$ be a $B$-Hopf algebroid and $N$ a faithfully flat left $\BB$-module, then we have
\begin{align}
    \Hom_{\BB-}(\cL,N)\cong&\,{}^{\cL}\Hom(\cL,\cL\ot_{\BB}N)\label{equ. correspondence 1},\\
    \Hom_{\BB-}(\cL\ot_{\BB}\cL,N)\cong&\,{}^{\cL}\Hom(\cL\ot_{B}\cL,\cL\ot_{\BB}N)\label{equ. correspondence 2}.
\end{align}
Similarly, if $N$ is a faithfully flat left $B$-module, we have
\begin{align}
    \Hom_{B-}(\cL,N)\cong&\Hom{}^{\cL}(\cL,\cL\ot_{B}N)\label{equ. correspondence 3},\\
    \Hom_{B-}(\cL\ot_{B}\cL,N)\cong&\Hom{}^{\cL}(\cL\ot_{ \BB}\cL,\cL\ot_{B}N)\label{equ. correspondence 4}.
\end{align}
\end{prop}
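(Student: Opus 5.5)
The plan is to write down explicit mutually inverse maps, built from the translation map $X\mapsto X_+\ot_{\BB}X_-$ (for \eqref{equ. correspondence 1}, \eqref{equ. correspondence 2}) and from $X\mapsto X_{[+]}\ot_B X_{[-]}$ (for \eqref{equ. correspondence 3}, \eqref{equ. correspondence 4}). Faithful flatness will be used only to see that the inverse map is well defined and lands where it should. I treat \eqref{equ. correspondence 1} in detail. The other three follow the same pattern, using \Cref{prop. anti left and left Galois maps} and the $\mu$-identities \eqref{equ. inverse mu 1}--\eqref{equ. inverse mu 10} in place of the $\lambda$-identities.

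\emph{Forward map.} For $f\in\Hom_{\BB-}(\cL,N)$, set
\[
F_f(X):=X_+\ot_{\BB}f(X_-)\in\cL\ot_{\BB}N .
\]
This is well defined because $X_+\ot_{\BB}X_-$ is balanced over $\BB$ and $f$ is left $\BB$-linear. Left $\cL$-colinearity of $F_f$, where $\cL\ot_{\BB}N$ carries the coaction $\Delta\ot\id$, is exactly \eqref{equ. inverse lamda 5} with $f$ applied to the last leg. The $\BB$-linearity conditions needed to make sense of the coaction follow from \eqref{equ. inverse lamda 9} and \eqref{equ. inverse lamda 10}. For \eqref{equ. correspondence 2} I would use the same formula with $X\ot_B Y\mapsto X_{+}Y_{+}\ot_{\BB} f(Y_-\ot_{\BB} X_-)$, or more precisely the version dictated by \eqref{mixed coass 2}; colinearity again reduces to \eqref{equ. inverse lamda 5}.

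\emph{Inverse map.} Given a colinear $F$, I would first apply $\lambda^{-1}\ot_{\BB}N$ to the colinearity identity $(\Delta\ot\id)F(X)=X\o\ot F(X\t)$. Then \eqref{equ. inverse lamda 1} and \eqref{equ. inverse lamda 2} should show that $X\o{}_-\cdot F(X\t)$ lies in the image of $1\ot_{\BB}N\to\cL\ot_{\BB}N$, where $\cL$ acts on $\cL\ot_{\BB}N$ by left multiplication on the first leg. The point is that the map $\cL\ot_{\BB}N\to \cL\di(\cL\ot_{\BB}N)$, $Y\ot n\mapsto Y\o\ot Y\t\ot n$, together with the "constant" inclusion $Y\ot n\mapsto Y\ot 1\ot n$, forms an equalizer whose equalizing part is $1\ot_{\BB}N\cong N$. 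This is the step where faithful flatness of $N$ over $\BB$ enters: it lets me pass the exactness of the Amitsur-type complex for $\cL$ (coming from bijectivity of $\lambda$) to $-\ot_{\BB}N$, and it gives injectivity of $N\to\cL\ot_{\BB}N$. I then define $f(X)\in N$ by
\[
1\ot_{\BB}f(X)=X\o{}_-\,F(X\t).
\]
Left $\BB$-linearity of $f$ follows from \eqref{equ. inverse lamda 9}.

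\emph{Checking the two maps are inverse.} Starting from $f$, I compute $X\o{}_-\,(X\t{}_+\ot f(X\t{}_-))$ and reduce it to $1\ot f(X)$ using \eqref{equ. inverse lamda 5}, \eqref{equ. inverse lamda 6} and \eqref{equ. inverse lamda 8}. Starting from $F$, I show $X_+\ot f(X_-)=F(X)$. For this I multiply by $X_+$ and use $X_+\,X_-\o{}_-\ot X_-\t=\dots$, obtained from \eqref{equ. inverse lamda 6} and \eqref{equ. inverse lamda 1}, so that $X_+X_{-\o -}F(X_{-\t})$ collapses to $F(X)$.

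The main obstacle will be the equalizer/descent step in the inverse construction. It requires producing an element of $N$ from a colinear map when only $\lambda$, not an antipode, is available, and keeping track of which balanced tensor product each expression lives in. That is the only place where faithful flatness is really needed, and I expect it to carry most of the work. The remaining verifications are Sweedler-type computations with the listed identities.
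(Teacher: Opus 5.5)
\textbf{There is a genuine gap in the inverse map.} Your forward map $F_f(X)=X_+\ot_{\BB}f(X_-)$ and its two-variable version agree with the paper. The inverse you propose, however, cannot be repaired by more careful bookkeeping.

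First, as written, $X\o{}_-\,F(X\t)$ is not defined. $X\o{}_-$ is only one leg of $\lambda^{-1}(X\o\di 1)\in\cL\ou{\BB}\cL$, and without an antipode there is no way to dispose of the $+$-leg.

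More fundamentally, no inverse built from $\lambda^{-1}$ alone can exist, because the statement uses anti-left Hopfness essentially. Take $\cL=H$ an ordinary Hopf algebra ($B=k$).
\begin{itemize}
\item Colinear maps $H\to H\ot N$ are exactly $F=(\id\ot g)\Delta$ with $g=(\varepsilon\ot\id)F$.
\item The map $F_f(X)=X\o\ot f(S(X\t))$ corresponds to $g=f\circ S$.
\item Hence $f\mapsto F_f$ is bijective if and only if $S$ is bijective, which is exactly invertibility of $\mu$.
\end{itemize}
An argument using only $\lambda$ would therefore also show that $f\mapsto F_f$ is bijective for Hopf algebras with non-bijective antipode, where it is false. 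Concretely, read your formula in the Hopf case as $S(X\o)F(X\t)$, the only sensible way of dropping the $+$-leg. It returns $1\ot g(X)$, so it inverts the unskewed correspondence $g\mapsto(\id\ot g)\Delta$ of \Cref{free comodule bijection}. Your final check then fails: $X_+\ot g(X_-)=X\o\ot g(S(X\t))\neq F(X)$ in general.

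\textbf{The missing idea: each of the four bijections uses both translation maps,} one in the forward map and the other in the inverse. For \eqref{equ. correspondence 1} the paper sets $1\ot f_F(X)=X_{[+]}F(X_{[-]})$. This is well defined because $F$ is left $B$-linear.
\begin{itemize}
\item Colinearity of $F$, together with \eqref{equ. inverse mu 5}, \eqref{equ. inverse mu 6} and \eqref{equ. inverse mu 8}, shows that this element is left coinvariant in $\cL\ou{\BB}N$.
\item Faithful flatness then identifies those coinvariants with $1\ot N$. Note that the relevant second arrow of that equalizer is $Y\ot n\mapsto 1\ot Y\ot n$, not $Y\ot 1\ot n$.
\item Both composites are one-line checks with the mixed relations of \Cref{prop. anti left and left Galois maps}. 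By \eqref{mixed coass 2} and \eqref{equ. inverse lamda 8},
\[ F_{f_F}(X)=X_+X_{-[+]}F(X_{-[-]})=X\o{}_+X\o{}_-F(X\t)=F(X). \]
By \eqref{mixed coass 1} and \eqref{equ. inverse mu 8},
\[ f_{F_f}(X)=X_{[+]}X_{[-]+}\ot f(X_{[-]-})=X\t{}_{[+]}X\t{}_{[-]}\ot f(X\o)=f(X). \]
\end{itemize}

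The same crossing pattern governs the other cases. For \eqref{equ. correspondence 2} the inverse is $1\ot\phi_\Phi(X\ot Y)=X_{[+]}Y_{[+]}\Phi(Y_{[-]}\ot X_{[-]})$. For \eqref{equ. correspondence 3} and \eqref{equ. correspondence 4} the forward map uses $X_{[+]}\ot X_{[-]}$ and the inverse uses $X_+\ot X_-$, for example $1\ot f_F(X)=X_+F(X_-)$. So ``replace the $\lambda$-identities by the $\mu$-identities'' is not the right recipe for the remaining cases either.
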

\begin{proof}
    More precisely, the correspondence in (\ref{equ. correspondence 1}) is given by
\[F_{f}(X)=X_{+}\ot f(X_{-})\in \cL\ot_{\BB}N,\]
for any $X\in \cL$ and $f\in \Hom_{\BB-}(\cL,N)$. This map is well-defined because $f$ is left $\BB$-linear, and colinear because of \eqref{equ. inverse lamda 5}.

Conversely, for every $F\in {}^{\cL}\Hom(\cL,\cL\ot_{\BB}N)$, we can define $f_F\colon \CL\to N$ by
\[1\ot f_{F}(X)=X_{[+]}F(X_{[-]})\in\CL\ou\BB N.\]
Namely, the right hand side defines a map to $\CL\ou\BB N$ because $F$ is left $B$-linear. One can then check that the right hand side is of the form $1\ot f_F(X)$ for some $f_F(X)\in N$ using colinearity of $F$ and faithful flatness of $N$. We will treat a more complicated case in detail below. It is not hard to check that $f_F$ is left $\ol B$-linear.
 Moreover
\begin{multline*}
    F_{f_F}(X)=X_+\ot f_F(X_-)
    =X_+(1\ot f_F(X_-))
    \\=X_+(X_{-[+]} F(X_{-[-]}))
    =X\o{}_{+}X\o{}_{-}F(X\t)=F(X).
\end{multline*}Also, we have
\begin{align*}
    f_{F_{f}}(X)=X_{[+]}F_{f}(X_{[-]})=X_{[+]}X_{[-]+}\ot f(X_{[-]-})=X\t{}_{[+]}X\t{}_{[-]}\ot f(X\o)=f(X).
\end{align*}
We turn to the correspondence in (\ref{equ. correspondence 2}).
Given $\phi\in \Hom_{\BB-}(\cL\ot_{\BB}\cL,N)$, we can efine $\Phi_\phi\colon \CL\ou B\CL\to \CL\ou\BB N$ by 
\[\Phi_{\phi}(X,Y)=X_{+}\,Y_{+}\ot \phi(Y_{-}\ot X_{-}).\]
Namely, it is no problem to define 
\begin{align*}\Phi_1\colon \CL\ot\CL\ot\CL&\to\CL\ou\BB N\\
X\ot X'\ot Y&\mapsto XY_+\ot\phi(Y_-\ot X')
\end{align*}
because $\phi$ is left $\BB$-linear. Since 
\begin{multline*}
\Phi_1(X\ol b\ot X'\ot Y)
=X\ol bY_+\ot\phi(Y_-\ot X')
=XY_+\ot \phi(Y_-\ol b\ot X')
\\=XY_+\ot\phi(Y_-\ot\ol bX')
=\Phi_1(X\ot\ol bX'\ot Y),
\end{multline*}
we can define 
\[\Phi_2\colon \CL\ot\CL\ni X\ot Y\mapsto \Phi_1(X_+\ot X_-\ot Y)\in\CL\ou BN,\]
and finally
\begin{multline*}
    \Phi_2(Xb\ot Y)=(Xb)_+Y_+\ot\phi(Y_-\ot  (Xb)_-)
    =X_+bY_+\ot\phi(Y_-\ot  X_-)
    \\=X_+(bY)_+\ot\phi((bY)_-\ot X_-)
    =\Phi_2(X\ot bY)
\end{multline*}
shows that $\Phi_\phi$ is well defined. It is left $B$-linear by \eqref{equ. inverse lamda 9}, and that it is a left comodule map follows from \eqref{equ. inverse lamda 5}.

Conversely, given $\Phi\in {}^{\cL}\Hom(\cL\ot_{B}\cL,\cL\ot_{\BB}N)$, we can define $\phi_\Phi\colon \CL\ou\BB\CL\to N$ by
\[1\ot \phi_{\Phi}(X\ot Y)=X_{[+]}\,Y_{[+]}\,\Phi(Y_{[-]}\ot\,X_{[-]})\in\CL\ou\BB N.\]
In detail, we first define 
\[\phi_1\colon \CL\ot \CL\ot\CL\to \CL\ou \BB N\]
by $\phi(X\ot X'\ot Y)=XY_{[+]}\Phi(Y_{[-]}\ot X')$, using the fact that $\Phi$ is left $B$-linear. Since
\begin{multline*}
    \phi_1(Xb\ot X'\ot Y)
    =XbY_{[+]}\Phi(Y_{[-]}\ot X')
    =XY_{[+]}\Phi(Y_{[-]}b\ot X')
    =XY_{[+]}\Phi(Y_{[-]}\ot bX')
\end{multline*}
we can define 
\[\phi_2\colon \CL\ot\CL\ni X\ot Y\mapsto \phi_1(X_{[+]}\ot X_{[-]}\ot Y)\in \CL\ou\BB N \]
which is easily seen to factor over $\CL\ou\BB\CL$. Since
\begin{align*}
   \delta(\phi_2(X\ot Y))
   =&X_{[+]}\o Y_{[+]}\o\Phi(Y_{[-]}\ot X_{[-]})\mo\ot X_{[+]}\t Y_{[+]}\t\Phi(Y_{[-]}\ot X_{[-]})\z
   \\=&X\o{}_{[+]}Y\o{}_{[+]}\Phi(Y\o{}_{[-]}\ot X\o{}_{[-]})\mo\ot X\t Y\t \Phi(Y\o{}_{[-]}\ot X\o{}_{[-]})\z
   \\=&X\o{}_{[+]}Y\o{}_{[+]}Y\o{}_{[-]}\o X\o{}_{[-]}\o\ot X\t Y\t\Phi(Y\o{}_{[-]}\t\ot X\o{}_{[-]}\t)
   \\=&X\o{}_{[+][+]}Y\o{}_{[+][+]}Y\o{}_{[+][-]}X\o{}_{[+][-]}\ot X\t Y\t \Phi(Y\o{}_{[-]}\ot X\o{}_{[-]})
   \\=&1\ot\phi_2(X\ot Y),
\end{align*}
we have $\phi_2(X\ot Y)\in{{}^{\co\CL}(\CL\ou\BB N)}\cong {}^{\co\CL}\CL\ou\BB N\cong N$ (by faithful flatness of $N$ and thus $\phi_\Phi$ is well-defined. That it is left $\BB$-linear is easy to check.

We omit verifying that the bijections we defined are mutually inverse, i.~e.\ $\Phi_{\phi_\Phi}=\Phi$ and $\phi_{\Phi_\phi}=\phi$

The correspondence in (\ref{equ. correspondence 3}) is similar. Given $f\in \Hom_{B-}(\cL,N)$, we can define 
\[F_{f}(X)=X_{[+]}\ot f(X_{[-]})\in \cL\ot_{B}N.\]
Conversely, given $F\in \Hom{}^{\cL}(\cL,\cL\ot_{B}N)$, we can define $f_F$ by 
\[1\ot f_{F}(X)=X_{+}\,F(X_{-})\in \cL\ot_{B}N.\]
Finally the correspondence in \eqref{equ. correspondence 4} is described by
\[\Phi_\phi(X\ot Y)=X_{[+]}Y_{[+]}\ot\phi(Y_{[-]}\ot X_{[-]})\]
for $\phi\in \Hom_{B-}(\CL\ou B\CL,N)$
and 
\[1\ot \phi_\Phi(X\ot y)=X_+Y_+\Phi(Y_-\ot X_-)\]
for $\Phi\in\Hom^\CL(\CL\ou\BB\CL,\CL\ou BN)$.
\end{proof}

\section{(Anti-)cleft extensions of Hopf algebroids}

In this section, we will study anti-cleft extensions and twisted crossed products of Hopf algebroids. Moreover, we are going to show anti-cleft extensions, twisted crossed products with invertible 2-cocycle and Hopf Galois extensions with normal basis properties are equivalent. Finally, we will also study the equivalence classes of cleft extensions.

\subsection{Skewed convolutions}

 In the theory of Hopf algebroids, convolution of maps is often not the ``right notion'' or not even defined. The prime example is of course the definition of a Hopf algebroid itself, which is \emph{not} by demanding that the identity be convolution invertible (with the antipode being the inverse). The definition of a cleft extension in the Hopf algebra case centers on convolution invertibility of the ``cleaving map''. The most banal cleft extension is the Hopf algebra itself with the identity as the cleaving map and the antipode as its inverse. Visibly our definition has to be different, and we start by proposing a strangely twisted version of convolution invertibility of maps defined on a Hopf algebroid.

\begin{defi}
Let $\CL$ be a Hopf algebroid over $B$, and $A$ an $B^e$-ring. Write $\eta_A\colon B^e\to A$ for the unit of the $B^e$-ring.
For $f\in\Hom_{-\ol B}(\CL,A)$ and $g\in\Hom_{\ol B-}(\CL,A)$ we define the skewed convolution product $f\skpr g\colon\CL\to A$ by
\[f\skpr g(X)=f(X_+)g(X_-).\]
We say that $g$ is a right anti-skewed inverse of $f$ and $f$ a left skewed inverse of $g$ if $f\skpr g=\eta_A\varepsilon$.

If $g\in \Hom_{-B}(\CL,A)$ and $f\in\Hom_{B-}(\CL,A)$ we define the anti-skewed convolution product $g\askpr f$ by
\[g\askpr f(X)=g(X_{[+]})f(X_{[-]})\]
and we say that $g$ is a  left anti-skewed inverse for $f$, and $f$ a right skewed inverse for $g$ if $g\askpr f=\eta_A\ol\varepsilon$.

If $f\in\Hom_{B-\ol B}(\CL,A)$ and $g\in\Hom_{\ol B-B}(\CL,A)$ both products are defined. We will call a left and right (anti)-skewed inverse an (anti-)skewed inverse.
\end{defi}
\begin{rem}
\begin{enumerate}
    \item   Let $f\in\Hom_{-\ol B}(\CL,A)$ and $g\in\Hom_{\ol B-}(\CL,A).$
    \begin{enumerate}
        \item If $f$ is also left $B$-linear then $(f\skpr g)(bX)=b(f\skpr g)(X)$.
        \item If $g$ is also right $B$-linear then $(f\skpr g)(\ol bX)=(f\skpr g)(X)b$.
    \end{enumerate}
    \item Let $g\in \Hom_{-B}(\CL,A)$ and $f\in\Hom_{B-}(\CL,A)$.
    \begin{enumerate}
        \item If $g$ is also left $\ol B$-linear then $(g\askpr f)(\ol bX)=\ol b(g\askpr f)(X)$.
        \item If $f$ is also right $\ol B$-linear then $(g\askpr f)(bX)=(g\askpr f)(X)\ol b$.
    \end{enumerate}
\end{enumerate}
  
\end{rem}\label{rem. classical explaination}
\begin{rem}
 Assume that $\CL$ is an ordinary Hopf algebra ($B=k$) with bijective antipode $S$. Then $f(X_+)g(X_-)=f(X\o)g(S(X\t))$. So skewed convolution is convolution but skewed by inserting the antipode. Further, $f$ admits a right anti-skewed inverse iff $f$ has a right convolution inverse $\ol f$, the anti-skewed inverse being $\ol f S\inv$, so it is the convolution inverse \emph{anti}-skewed by the \emph{inverse of} the antipode.
    
    Thus in the general case, the skewed convolution product is a generalization of the ordinary convolution product of $f$ and $gS$ while neither $S$ nor the ordinary convolution are defined, and the right anti-skewed inverse is a generalization of the convolution inverse composed with $S\inv$.

    Similarly the anti-skewed convolution is given by 
    $g\askpr f(X)=g(X\t)fS\inv(X\o)$ in the case of a Hopf algebra with bijective antipode. In particular $(g\askpr f)S=(gS*f).$ Thus the anti-skewed product (though only composed with $S$) is again a version of the convolution product with one factor composed by $S$, and $f$ has a left anti-skewed inverse iff it has a left convolution inverse $\ol f$, the anti-skewed left inverse being $\ol fS\inv$.

    Note also that for a Hopf algebroid,  the identity is the skewed and anti-skewed inverse of itself.
\end{rem}

Ordinary convolution defines an associative multiplication, which immediately implies that inverses are unique. In our situation something similar can be obtained, based on a sort of mixed associativity of the (anti-)skewed products. For $f,g\colon\CL\to A$ we define $(f*g)(X)=f(X\o)g(X\t)$ and $(f\ol*g)(X)=f(X\t)g(X\o)$. There are obviously conditions for $f$ and $g$ for each of the products to be defined, which we do not spell out.
\begin{lem}
    \begin{enumerate}
        \item For $f\in\Hom_{\ol B-B}(\CL,A)$, $h\in\Hom_{B-}(\CL,A)$ and $g\in\Hom_{-\ol B}(\CL,A)$ we have
        \[g\skpr(f\askpr h)=(g\skpr f)*h\]
        and $f\askpr\eta\varepsilon=f.$
    \item For $f\in\Hom_{B-\ol B }(\CL,A)$, $h\in\Hom_{-B}(\CL,A)$ and $g\in\Hom_{\ol B-}(\CL,A)$ we have
    \[h\askpr(f\skpr g)=(h\askpr f)\ol* g\]
    and $f\skpr\eta\overline{\varepsilon}=f.$
    \end{enumerate}
\end{lem}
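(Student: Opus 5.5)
The proof will be a direct computation: expand both sides, then use one of the mixed coassociativity identities of \Cref{prop. anti left and left Galois maps} to move the coproduct across the (anti-)translation maps.

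For (1), expanding the left-hand side gives
\[
g\skpr(f\askpr h)(X)=g(X_+)\,f(X_{-[+]})\,h(X_{-[-]}).
\]
First I will check that this expression is well defined. The linearity assumptions are exactly what allow the balanced tensor products $\ot_{\BB}$ and $\ot_B$ in $X_+\ot_{\BB}X_{-[+]}\ot_B X_{-[-]}$ to be respected:
\begin{itemize}
\item $g$ is right $\ol B$-linear and $f$ is left $\ol B$-linear, which handles $\ot_{\BB}$;
\item $f$ is right $B$-linear and $h$ is left $B$-linear, which handles $\ot_B$.
\end{itemize}
Next I apply \eqref{mixed coass 2}, which rewrites the element as $X\o{}_+\ot X\o{}_-\ot X\t$ in $\CL\ot_{\BB}\CL\ot_B\CL$. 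The expression becomes
\[
g(X\o{}_+)\,f(X\o{}_-)\,h(X\t)=(g\skpr f)(X\o)\,h(X\t)=((g\skpr f)*h)(X).
\]
The convolution $*$ is meaningful here because $g\skpr f$ is right $B$-linear, since $f$ is right $B$-linear (part (1)(b) of the preceding remark), and $h$ is left $B$-linear. This matches the Takeuchi balancing $X\o b\ot X\t=X\o\ot bX\t$.

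For the unit statement, $f\askpr\eta\varepsilon(X)=f(X_{[+]})\,\eta(\varepsilon(X_{[-]}))$. Since $\varepsilon(X_{[-]})\in B$ and $f$ is right $B$-linear, this equals $f(X_{[+]}\varepsilon(X_{[-]}))$. By \eqref{equ. inverse mu 7} that is $f(X)$. One should double-check that $\eta(b)$ multiplying on the right corresponds to right $B$-linearity, i.e.\ that $f(Yb)=f(Y)\eta(b)$ is the relevant convention.

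For (2), expanding gives
\[
h\askpr(f\skpr g)(X)=h(X_{[+]})\,f(X_{[-]+})\,g(X_{[-]-}).
\]
Using \eqref{mixed coass 1} this becomes $h(X\t{}_{[+]})\,f(X\t{}_{[-]})\,g(X\o)=(h\askpr f)(X\t)\,g(X\o)$, which is $((h\askpr f)\ol* g)(X)$ by definition. Well-definedness of $\ol*$ uses that $h\askpr f$ is left $\ol B$-linear, by part (2)(b) of the remark applied to $f$ being right $\ol B$-linear. The unit statement reads $f(X_+)\,\eta(\ol{\varepsilon(X_-)})=f(X_+\ol{\varepsilon(X_-)})=f(X)$, using right $\ol B$-linearity of $f$ and \eqref{equ. inverse lamda 7}.

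The main obstacle is not the algebra but the bookkeeping. I must verify that each composite map factors through exactly the tensor products in which \eqref{mixed coass 1} and \eqref{mixed coass 2} are stated. If these equalities do not hold at the level of $A$-valued multilinear maps on those quotients, an intermediate map such as $\Phi_1,\Phi_2$ from the proof of \Cref{skew bijection for colinear maps} must be built instead.
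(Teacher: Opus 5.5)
Your proposal is correct and follows the paper's proof: evaluate on \eqref{mixed coass 2} resp.\ \eqref{mixed coass 1}, which hold precisely in $\CL\ot_{\BB}\CL\ot_B\CL$ resp.\ $\CL\ot_B\CL\ot_{\BB}\CL$, so no auxiliary maps are needed, and use \eqref{equ. inverse mu 7} resp.\ \eqref{equ. inverse lamda 7} for the unit identities. One minor slip: $*$ and $\ol*$ must be balanced against the relation $\ol bX\o\ot X\t=X\o\ot bX\t$ in $\CL\di\CL$, and what the remark gives for this is $(g\skpr f)(\ol bX)=(g\skpr f)(X)b$ and $(h\askpr f)(bX)=(h\askpr f)(X)\ol b$, not plain right $B$- or left $\ol B$-linearity as you state.
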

\begin{proof}
  Apply the three maps to both sides of \eqref{mixed coass 2} and \eqref{mixed coass 1}, respectively, to get the two mixed associativities. The counit identities follow from \eqref{equ. inverse lamda 7} and \eqref{equ. inverse mu 7}.
\end{proof}
\begin{cor}\label{lem. cancelable}
    \begin{enumerate}
        \item Let  $f\in\Hom_{ B-\ol B}(\CL,A)$. If $f$ has a right inverse for $\skpr$, then it is right cancelable for $\askpr$. If $f$ has a left inverse for $\askpr$, then it is left cancelable for $\skpr.$
        \item Let $f\in\Hom_{\ol B-B}(\CL,A)$. If $f$ has a right inverse for $\askpr$, then it is right cancelable for $\skpr$. If $f$ has a left inverse for $\skpr$, then it is left cancelable for $\askpr$.
    \end{enumerate}
\end{cor}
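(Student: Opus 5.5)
The corollary follows from the two mixed associativities of the preceding lemma, in the same way that uniqueness of inverses follows from associativity of ordinary convolution. In each case a one-sided inverse supplies a unit on one side of the equation to be cancelled. The mixed associativity moves that inverse next to the unit, and the counit identities of the lemma ($f\askpr\eta\varepsilon=f$ and $f\skpr\eta\ol\varepsilon=f$) then finish the argument.

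Take (1), and let $f\in\Hom_{B-\ol B}(\CL,A)$ have a right inverse $g$ for $\skpr$, i.e.\ $f\skpr g=\eta\varepsilon$ with $g\in\Hom_{\ol B-}(\CL,A)$. For right cancelability with respect to $\askpr$, suppose $h\askpr f=h'\askpr f$ with $h,h'\in\Hom_{-B}(\CL,A)$. Apply $-\,\ol*\, g$ and use part (2) of the lemma:
\[h\askpr(f\skpr g)=(h\askpr f)\ol* g=(h'\askpr f)\ol* g=h'\askpr(f\skpr g).\]
Hence $h\askpr\eta\varepsilon=h'\askpr\eta\varepsilon$. The counit identity then gives $h=h'$, since $h\askpr\eta\varepsilon=h$ holds by \eqref{equ. inverse mu 7} by the same computation as in the lemma.

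For the other half of (1), suppose $f$ has a left inverse $h$ for $\askpr$, so $h\askpr f=\eta\ol\varepsilon$. Assume $f\skpr g=f\skpr g'$. Then part (2) of the lemma gives
\[(\eta\ol\varepsilon)\ol* g=(h\askpr f)\ol* g=h\askpr(f\skpr g)=h\askpr(f\skpr g')=(\eta\ol\varepsilon)\ol* g'.\]
It remains to check that $(\eta\ol\varepsilon)\ol* g=g$. This is a counit axiom, $\overline{\varepsilon(X\t)}X\o=X$, transported through $g$. Using the left $\ol B$-linearity of $g$, it reads $\eta(\overline{\varepsilon(X\t)})g(X\o)=g(\overline{\varepsilon(X\t)}X\o)=g(X)$.

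Part (2) is proved symmetrically using part (1) of the lemma, $g\skpr(f\askpr h)=(g\skpr f)* h$. If $f\askpr h=\eta\ol\varepsilon$, then from $g\skpr f=g'\skpr f$ we get $g\skpr\eta\ol\varepsilon=g'\skpr\eta\ol\varepsilon$, and the counit identity \eqref{equ. inverse lamda 7} gives $g=g'$. If $g\skpr f=\eta\varepsilon$, then from $f\askpr h=f\askpr h'$ we get $(\eta\varepsilon)* h=(\eta\varepsilon)* h'$, and $\varepsilon(X\o)X\t=X$ together with left $B$-linearity of $h$ gives $h=h'$.

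\textbf{Main obstacle.} The real work is bookkeeping rather than computation. One must check in each case that the linearity hypotheses on $f$, on the inverse and on the maps being cancelled match those required in the lemma, so that every product written above is well defined. One must also check that the counit identities are used with the correct unit, $\eta\varepsilon$ versus $\eta\ol\varepsilon$, on the correct side.
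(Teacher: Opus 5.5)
Your proof is correct and follows the paper's argument: the paper phrases cancelability as $h\askpr f=0\Rightarrow h=0$ (and similarly in the other cases), which is equivalent to your $h\askpr f=h'\askpr f\Rightarrow h=h'$ by bilinearity, and it uses the same mixed associativities and counit identities. You also check $(\eta\ol\varepsilon)\ol* g=g$ and $(\eta\varepsilon)* h=h$ from the counit axioms and the linearity of $g$ and $h$; the paper uses these two identities without comment.
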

\begin{proof}
  For (1),  let $h\askpr f=0$, and let $g$ such that $f\skpr g=\eta\varepsilon.$ Then 
    \[h=h\askpr\eta\varepsilon=h\askpr (f\skpr g)=(h\askpr f)\ol* g=0.\]
If $f\skpr g=0$ and $h\askpr f=\eta\ol\varepsilon$, then 
    \[g=(\eta\ol{\varepsilon}) \ol{*} g=(h\askpr f)\ol{*} g=h\askpr (f\skpr g)=0.\]
For (2), if $g\skpr f=0$ and $f\askpr h=\eta\ol\varepsilon$, then 
    \[g=g\skpr\eta\ol\varepsilon=g\skpr(f\askpr h)=(g\skpr f)*h=0.\]    
   If $f\askpr h=0$ and $g\skpr f=\eta \varepsilon$, then
   \begin{align*}
       h=\eta\varepsilon\ast h=(g\skpr f)\ast h=g\skpr (f \askpr h)=0.
   \end{align*}
   \end{proof}

\begin{cor}\label{cor. skew inverse is unique}
  If $g\in\Hom_{\ol B-B}(\CL,A)$ has a left skewed inverse $f$ and a right skewed inverse $f'$ then they are equal. In particular, the skewed inverse is uniquely determined if it exists. The same is true with ``skewed'' replacing ``anti-skewed''. 
\end{cor}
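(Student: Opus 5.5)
We use the mixed associativity lemma, in the same way one proves uniqueness of inverses in a monoid. Let $g\in\Hom_{\ol B-B}(\CL,A)$. Suppose $f$ is a left skewed inverse of $g$, i.e.\ $f\skpr g=\eta\varepsilon$; for this we need $f\in\Hom_{-\ol B}(\CL,A)$. Suppose $f'$ is a right skewed inverse of $g$, i.e.\ $g\askpr f'=\eta\ol\varepsilon$ with $f'\in\Hom_{B-}(\CL,A)$. We want to compute the expression $f\skpr(g\askpr f')$ in two ways.

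\textbf{First bracketing.} Part (1) of the lemma applies with $f$ in the role of ``$g$'', $g$ in the role of ``$f$'' and $f'$ in the role of ``$h$''. The linearity hypotheses are exactly those listed above. It gives
\[f\skpr(g\askpr f')=(f\skpr g)*f'=(\eta\varepsilon)*f'.\]
Using the counit axiom $\varepsilon(X\o)X\t=X$ and left $B$-linearity of $f'$, we get $(\eta\varepsilon)*f'(X)=\eta(\varepsilon(X\o))f'(X\t)=f'(\varepsilon(X\o)X\t)=f'(X)$. One should check that $\eta\varepsilon*f'$ is well defined on $\CL\times_B\CL$. It is, because $f'$ is left $B$-linear and $\eta\varepsilon$ is right $B$-compatible in the needed sense, since $\varepsilon(Xb)$ versus $\varepsilon(X)b$ is governed by the character property.

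\textbf{Second bracketing.} On the other side, $f\skpr(\eta\ol\varepsilon)=f$ by the counit identity from part (2) of the lemma, i.e.\ by \eqref{equ. inverse lamda 7}: $f(X_+)\eta(\ol{\varepsilon(X_-)})=f(X_+\ol{\varepsilon(X_-)})=f(X)$, using right $\ol B$-linearity of $f$. Comparing the two computations gives $f=f'$.

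\textbf{Uniqueness and the anti-skewed version.} Uniqueness of a two-sided skewed inverse follows immediately: any two such inverses are each simultaneously a left and a right inverse, so they coincide with each other by the argument above. For the version with ``anti-skewed'' in place of ``skewed'' we take $g\in\Hom_{B-\ol B}(\CL,A)$ with $h\askpr g=\eta\ol\varepsilon$ and $g\skpr h'=\eta\varepsilon$. We then compute $h\askpr(g\skpr h')$ via part (2) of the lemma: it equals $(h\askpr g)\ol* h'=(\eta\ol\varepsilon)\ol*h'=h'$ by the other counit axiom $\ol{\varepsilon(X\t)}X\o=X$. It also equals $h\askpr\eta\varepsilon=h$ by \eqref{equ. inverse mu 7}.

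\textbf{Main obstacle.} The only delicate point is bookkeeping. We must make sure the module-linearity hypotheses required in each part of the lemma are met by the given maps, and that the degenerate convolutions $\eta\varepsilon*f'$ and $\eta\ol\varepsilon\ol*h'$ really reduce to $f'$ and $h'$ via the counit axioms. Everything else is the formal monoid argument.
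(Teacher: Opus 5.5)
Your proof is correct and is the paper's argument: the chain $f=f\skpr\eta\ol\varepsilon=f\skpr(g\askpr f')=(f\skpr g)*f'=\eta\varepsilon*f'=f'$ via part (1) of the mixed-associativity lemma, with the anti-skewed case (which the paper leaves implicit) handled symmetrically via part (2). One small correction to a side remark: $\eta\varepsilon*f'$ is well defined on $\CL\di\CL$ because $\varepsilon$ is left $B^e$-linear, $\varepsilon(\ol bX)=\varepsilon(X)b$, and $f'$ is left $B$-linear; the character property is not what is used.
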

\begin{proof}
    \[f=f\skpr\eta\ol\varepsilon=f\skpr(g\askpr f')=(f\skpr g)*f'=\eta\varepsilon*f'=f'.\]
\end{proof}

\begin{lem}\label{skew inverse and linearity}
    Let $f\in\Hom_{B-\ol B}(\CL,A)$ be the skewed inverse of $g\in\Hom_{\ol B-B}(\CL,A)$. 
    \begin{enumerate}
        \item $f$ is right $B$-linear iff $g$ is left $B$-linear.
        \item $g$ is right $\ol B$-linear iff $f$ is left $\ol B$-linear.
    \end{enumerate} 
\end{lem}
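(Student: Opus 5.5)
The plan is to deduce both equivalences from the cancellation properties in Corollary~\ref{lem. cancelable}. Since $f$ is the skewed inverse of $g$, we have $f\skpr g=\eta_A\varepsilon$ and $g\askpr f=\eta_A\ol\varepsilon$. Hence $g\in\Hom_{\ol B-B}(\CL,A)$ has a left inverse for $\skpr$, so it is left cancelable for $\askpr$. Likewise $f\in\Hom_{B-\ol B}(\CL,A)$ has a left inverse for $\askpr$, so it is left cancelable for $\skpr$. Each implication will be proved by showing that two candidate maps have the same (anti-)skewed product with $g$ (resp.\ $f$), and then cancelling.

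For (1), suppose first that $g$ is left $B$-linear and fix $b\in B$. Set $f_b(X):=f(Xb)$ and $f^b(X):=f(X)b$; both lie in $\Hom_{B-}(\CL,A)$, so $g\askpr f_b$ and $g\askpr f^b$ are defined. By \eqref{equ. inverse mu 10}, $X_{[+]}\ot_B X_{[-]}b=bX_{[+]}\ot_B X_{[-]}$. Using left $B$-linearity of $g$, this gives
\[(g\askpr f_b)(X)=g(bX_{[+]})f(X_{[-]})=b\,\ol{\varepsilon(X)},\]
where $b$ and $\ol{\varepsilon(X)}$ are read in $A$ via $\eta_A$. On the other side, $(g\askpr f^b)(X)=\ol{\varepsilon(X)}\,b$. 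These agree because $\eta_A$ is defined on $B^e=B\ot\BB$, so the images of $B$ and $\BB$ commute. Left cancelability of $g$ for $\askpr$ then yields $f_b=f^b$.

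Conversely, suppose $f$ is right $B$-linear. Set $g_b(X):=g(bX)$ and $g^b(X):=b\,g(X)$, both in $\Hom_{\ol B-}(\CL,A)$. By \eqref{equ. inverse lamda 9}, $(X\ol b)_+\ot_{\BB}(X\ol b)_-=X_+\ot_{\BB} bX_-$ and $(Xb)_+\ot_{\BB}(Xb)_-=X_+b\ot_{\BB}X_-$. Therefore
\[(f\skpr g_b)(X)=\varepsilon(X\ol b),\qquad (f\skpr g^b)(X)=f(X_+b)g(X_-)=\varepsilon(Xb).\]
For a left bialgebroid, $\varepsilon(Xb)=\varepsilon(X\ol b)$: apply the character property $\varepsilon(X\varepsilon(Y))=\varepsilon(X\ol{\varepsilon(Y)})$ with $Y=b$. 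Left cancelability of $f$ for $\skpr$ then gives $g_b=g^b$.

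Part (2) follows the same pattern with the roles of $B$ and $\BB$ exchanged. Assuming $f$ is left $\ol B$-linear, compare $g(X\ol b)$ with $g(X)\ol b$ after taking $f\skpr(-)$. This uses \eqref{equ. inverse lamda 10} to move $\ol b$ from $X_-$ to $X_+$, and once more the commutation of $B$ and $\BB$ in $A$. Assuming $g$ is right $\ol B$-linear, compare $f(\ol bX)$ with $\ol b f(X)$ after taking $g\askpr(-)$. This uses \eqref{equ. inverse mu 9} together with the identity $\ol{\varepsilon(X b)}=\ol{\varepsilon(X\ol b)}$.

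The main obstacle is bookkeeping rather than ideas. One must make sure every auxiliary map has exactly the one-sided linearity required for the relevant product to be defined. One must also pick, in each case, the right formula among \eqref{equ. inverse lamda 9}, \eqref{equ. inverse lamda 10}, \eqref{equ. inverse mu 9}, \eqref{equ. inverse mu 10} for shifting the scalar across the Galois-map inverses. Finally, in each case one has to confirm that the two resulting expressions really coincide, which comes down to the commutation of $B$ with $\BB$ or to $\varepsilon(Xb)=\varepsilon(X\ol b)$.
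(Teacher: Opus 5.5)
Your proof is correct, but for the two converse implications it takes a different route from the paper. From \eqref{equ. inverse mu 10} and the commutation of $\eta_A(B)$ with $\eta_A(\BB)$, the paper derives the single identity $(g\ell_b-\ell_bg)\askpr f=g\askpr(fr_b-r_bf)$. It gets both implications of (1) from this identity:
\begin{itemize}
\item If $g$ is left $B$-linear, the left side vanishes and one cancels $g$ on the left. This is exactly your first argument.
\item If $f$ is right $B$-linear, the right side vanishes and one cancels $f$ on the \emph{right} for $\askpr$. This uses the other half of Corollary~\ref{lem. cancelable}(1): $f$ has the right $\skpr$-inverse $g$, hence is right cancelable for $\askpr$.
\end{itemize}
For this second implication you instead pass to the product $f\skpr(-)$ and cancel $f$ on the left. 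That forces you to bring in \eqref{equ. inverse lamda 9} and the character identity $\varepsilon(Xb)=\varepsilon(X\ol b)$. The same happens in your treatment of (2), where you need \eqref{equ. inverse mu 9} and that identity again, and cancel $g$ on the left for $\askpr$ rather than on the right for $\skpr$. Your computations in these steps check out, so the argument is complete. The paper's version is more economical: one identity per part, and no use of the counit's character property. Yours relies only on the ``left inverse implies left cancelable'' halves of the cancellation corollary.
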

\begin{proof} 
  Denote by $r_b$ resp.\ $\ell_b$ the maps given as right resp.\ left multiplication with $b\in B$ (not specifying on which space, by abuse of notation). Then \cref{equ. inverse mu 10} implies that $(g\ell_b)\askpr f=g\askpr(fr_b).$ On the other hand 
  \begin{align*}
      (\ell_bg)\askpr f(X)=b(g\askpr f)(X)=b\ol{\varepsilon(X)}=\ol{\varepsilon(X)}b=(g\askpr f)(X)b=g\askpr(r_bf).
  \end{align*}
  Thus
  \begin{align*}
      (g\ell_b-\ell_bg)\askpr f=g\askpr(fr_b-r_bf),
  \end{align*}
  so assuming $g$ is left $B$-linear we get that $f$ is right $B$-linear by cancelling $g$, and vice versa by cancelling $f$. The proof of the other version is similar.
  \end{proof}
\begin{lem}\label{lem. skew inverse of also colinear}
    Let $\CH$ be a $B$-Hopf algebroid.
    Let $P$ be a right (resp.\ left) $\CH$-comodule algebra with $B\subseteq P^{co\CH}$ (resp.\ $\BB\subseteq {}^{co\CH}P$), and et $f\in\Hom_{B-\ol B}(\CH,P)$ be the skewed inverse of $g\in\Hom_{\ol B-B}(\CH,P)$. Then $g$ is a comodule map iff $f$ is.
\end{lem}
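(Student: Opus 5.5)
We treat the right comodule case; the left one is symmetric, with $\lambda$ and $\mu$ exchanging roles. The idea is to use the cancellation properties of \Cref{lem. cancelable} with $A=P\di\CH$ (or a suitable version of it), or, more concretely, to compare two expressions under the coaction using the mixed coassociativity relations. Suppose $g$ is colinear, i.e.\ $g(X)\z\ot g(X)\o=g(X\o)\ot X\t$. We want $f(X)\z\ot f(X)\o=f(X\o)\ot X\t$.

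The plan is to express $f$ in terms of $g$ by an explicit formula. Since $f$ is the skewed inverse of $g$, \Cref{cor. skew inverse is unique} and the mixed associativity give a characterization of $f$. Concretely, from $g\askpr f=\eta\ol\varepsilon$ and the counit identity we get
\[f(X)=(\eta\varepsilon*f)(X)=((f\skpr g)*f)(X)=f(X\o{}_+)g(X\o{}_-)f(X\t).\]
The most efficient route is a cancellation argument. Consider the two maps
\[F_1(X)=f(X)\z\ot f(X)\o,\qquad F_2(X)=f(X\o)\ot X\t,\]
both valued in $P\di\CH$, which is a $B^e$-ring (the relevant Takeuchi-type subspace being an algebra) since $B\subseteq P^{\co\CH}$. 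Likewise set
\[G(X)=g(X)\z\ot g(X)\o=g(X\o)\ot X\t.\]
Using that $\delta$ is an algebra map, we get $F_1\skpr G=\delta\circ(f\skpr g)=\delta\circ\eta\varepsilon$. I then plan to check, using \eqref{equ. inverse lamda 5} for the coproduct of $X_+$, that
\[F_2\skpr G(X)=f(X_+{}\o)g(X_-{}\o)\ot X_+{}\t X_-{}\t\]
also reduces to $\eta\varepsilon(X)\ot 1$, via \eqref{mixed coass 2}-type identities and \eqref{equ. inverse lamda 1}. Hence $F_1$ and $F_2$ are both left skewed inverses of $G$.

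Next, $G$ also has a right anti-skewed inverse, namely $\delta\circ f$ ($=F_1$), by applying $\delta$ to $g\askpr f=\eta\ol\varepsilon$. By \Cref{cor. skew inverse is unique}, or directly by \Cref{lem. cancelable}(2), $G$ is then left cancelable, so $F_1=F_2$, which is colinearity of $f$. The converse direction, $f$ colinear implies $g$ colinear, is identical with the roles reversed: show that $g(X)\z\ot g(X)\o$ and $g(X\o)\ot X\t$ are both right anti-skewed inverses of $\delta f$ (via \eqref{equ. inverse mu 5}, \eqref{equ. inverse mu 1}), and cancel.

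The main obstacle is bookkeeping rather than ideas. First, one must verify that $F_1,F_2,G$ carry the required one-sided $B$- and $\BB$-linearities into the $B^e$-ring $P\di\CH$ (or $P\times_{\BB}\CH$), so that all skewed products are defined and the mixed associativity lemma applies. Second, one must check that $F_2\skpr G=\eta\varepsilon$; this is where the identities $X_+{}\o\ot X_+{}\t\ot X_-$ of \eqref{equ. inverse lamda 5}, together with the colinearity of $g$ and $f\skpr g=\eta\varepsilon$, must be combined correctly. I expect that second computation to be the delicate step, and I would first try to carry it out within $P\di\CH$ directly, relying on the linearity conditions $f\in\Hom_{B-\ol B}$ and $g\in\Hom_{\ol B-B}$ for well-definedness.
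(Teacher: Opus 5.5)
Your strategy is the paper's. There, too, $\delta f$ is the skewed inverse of $\delta g$ in the $B^e$-ring $P\times_B\CH$, and one checks directly that $\tilde f\colon X\mapsto f(X\o)\ot X\t$ is a one-sided skewed inverse of $\tilde g\colon X\mapsto g(X\o)\ot X\t$. When $g$ is colinear, so that $\tilde g=\delta g$, uniqueness of skewed inverses (\Cref{cor. skew inverse is unique}) gives $\tilde f=\delta f$. Two points in your plan need fixing, though neither changes the approach.

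First, the hypotheses $f\in\Hom_{B-\ol B}(\CH,P)$ and $g\in\Hom_{\ol B-B}(\CH,P)$ do not give well-definedness. The expression $f(X\o)\ot X\t$ only makes sense if $f$ is also left $\ol B$-linear. Likewise, $g(X\o)\ot X\t$ lies in the Takeuchi product only if $g$ is also right $\ol B$-linear. The paper settles this first: a comodule map is in particular a $\ol B$-bimodule map, and by \Cref{skew inverse and linearity}(2), $g$ is a $\ol B$-bimodule map iff $f$ is.

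Second, neither \eqref{equ. inverse lamda 5} nor \eqref{mixed coass 2} expands $\Delta(X_-)$. What you need is \eqref{equ. inverse lamda 6}, followed by \eqref{equ. inverse lamda 1} applied to $X_+$:
\[
\tilde f\skpr\tilde g(X)=f(X_+\o)g(X_-\o)\ot X_+\t X_-\t=f(X_{++}\o)g(X_-)\ot X_{++}\t X_{+-}=f(X_+)g(X_-)\ot 1=\varepsilon(X)\ot 1 .
\]
This computation uses no colinearity of $g$, so it also gives the converse without a separate $\mu$-computation. If $f$ is colinear, then $\tilde f=\delta f$, hence $\delta f\skpr\tilde g=\delta f\skpr\delta g$. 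Since $\delta g\askpr\delta f=\eta\ol\varepsilon$, \Cref{lem. cancelable}(1) lets you cancel $\delta f$ and conclude $\tilde g=\delta g$.
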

\begin{proof}
    We treat the right comodule case, the other being similar.  Note that to be a comodule map, $f$, resp.\ $g$, has to be a $\BB$-bimodule map first. We have shown already that these conditions are equivalent. 

    Consider the coaction which is a $B^e$-ring map $P\to P\times_B \CH$ because of our condition on $B$. Thus it is easy to see that $\delta f$ is the skewed inverse of $\delta g$. We have to show that
    \begin{align*}
        \tilde f\colon \CH\ni X&\mapsto f(X\o)\ot X\t\in P\times_B \CH\\\intertext{is also the (at least one-sided) skew inverse of }
        \tilde g\colon \CH\ni X&\mapsto g(X\o)\ot X\t\in P\times_B \CH.
    \end{align*}
    Indeed
    \begin{align*}
        \tilde f\skpr\tilde g(X)&=\tilde f(X_+)\tilde g(X_-)\\
        &=(f(X_+\o)\ot X_+\t)(g(X_{-}\o)\ot X_{-}\t)\\
        &=f(X_+\o)g(X_{-}\o)\ot X_+\t\, X_{-}\t\\
        &=f(X_{++}\o)g(X_{-})\ot X_{++}\t\, X_{+-}\\
         &=f(X_{+})g(X_{-})\ot 1\\
        &=\varepsilon(X)1_{P\times \CH}.
    \end{align*}
\end{proof}

\subsection{Anti-cleft extensions}
In the light of the above preparations we can propose the following defionitions:
\begin{defi}\label{Anti-cleft extensions}
    Let $\CH$ and $\cL$ be  Hopf algebroids. 
    \begin{enumerate}
        \item A  reversible right $\CH$-comodule algebra $P$ is anti-cleft if $B$ is a subalgebra of $P^{\co\CH}$, and there is a colinear and left $B$-linear map  $\tilde\rho\colon\CH\to P$ that admits  an anti-skewed inverse.
        \item A  reversible left $\CL$-comodule algebra $P$ is cleft if $\ol B$ is a subalgebra of ${}^{co\CL}P$ and there is a colinear and left $\ol B$-linear map $\tilde\gamma\colon\CL\to P$ that admits a skewed inverse.
    \end{enumerate}
\end{defi}
\begin{rem}
\begin{enumerate}
    \item Under mild hypotheses of flatness of the Hopf algebroid, reversibility of the comodule structure is automatic by \cite{C20}. Requiring it explicitly underlines the idea that being a cleft extension is a property defined by structure maps (the cleaving and the reverse comodule structure) satisfying equations, as opposed to the more general notion of Hopf Galois extension.
    \item The definitions are such that a left (resp.\ right) comodule algebra over $\CH$ is cleft (resp.\ anti-cleft) if and only if it is anti-cleft (resp.\ cleft) when considered as a right (resp.\ left) comodule algebra over the coopposite $\CH^{\cop}$. Indeed,  let $P$ be a right $\CH$-comodule algebra with the coaction $\delta:P\to P\diamond_{B}\CH,\quad p\mapsto p\z\ot p\o$,  the left $\CH^{cop}$-coaction is given by $\delta': P\to \CH^{cop}\diamond_{\BB} P, p\mapsto p\o\ot p\z$. This is well defined as for the original right coaction we have the fact that $\ol{b}  p\z\ot p\o=p\z\ot b p\o$. So the induced left coaction read 
    $b p\o\diamond_{\BB}p\z=p\o\diamond_{\BB}\ol{b}  p\z$, which is well defined. In addition, assume $N\subseteq P$ is a cleft extension and  $\trho$ is the corresponding map with its anti-skew inverse $\rho$. Then it is clear that both $\trho$ and $\rho$ is left $\CH^{cop}$-colinear and $\rho$ is the skew inverse of $\trho$.
    \item 
    As we have seen, we could equally well have demanded the existence of a colinear and left $B$-linear $\rho\colon \CH\to P$ admitting a skewed inverse. The maps $\rho$ and $\tilde\rho$ determine each other and we will refer to the pair $(\rho,\tilde\rho)$ as a \textit{cleaving}.
    \item If $(\rho,\tilde\rho)$ is a cleaving, then $\rho(1)$ is invertible with inverse $\tilde\rho(1)$, and both are contained in $P^{\co\CH}$. In particular both elements commute with $\ol B\subset P$. Therefore a new cleaving $(\rho',\tilde\rho')$ can be defined by $\tilde\rho'(X)=\rho(1)\tilde\rho(X)$ and $\rho'(X)=\rho(X)\tilde\rho(1)$. Since this new cleaving is \emph{unital} in the sense that both maps preserve units, we shall always assume in the sequel that our cleavings are unital.
\end{enumerate}

\end{rem}

The following proposition will be useful later.
\begin{prop}\label{prop. cleaving maps}
    Given an anti $\CH$-cleft extension $N\subseteq P$ with cleaving  $(\rho,\trho)$ we define 
    \begin{align*}
        \CH\ot N\to N: X\ot n\mapsto X\wedge n:=\trho(X_{+})\,n\,\trho(X_{-})
    \end{align*}
and
\begin{align*}
        \CH\ot N\to N: X\ot n\mapsto X\vee  n:=\rho(X_{+})\,n\,\rho(X_{-}).
    \end{align*}
    Then we have
    \[X_{+}\wedge(X_{-}\vee n)=\varepsilon(X)n,\quad X_{[+]}\vee(X_{[-]}\wedge n)=n\varepsilon(X).\]
Also, we have
\[\trho(X\o{}_{+})\ot_{\BB}\rho(X\o{}_{-})\trho(X\t{}_{+})\ot_{\BB}\rho(X\t{}_{-})=\trho(X{}_{+})\ot_{\BB}1\ot_{\BB}\rho(X{}_{-})\]
\end{prop}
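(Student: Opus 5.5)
The plan is to prove all three identities by direct computation with the translation maps. The only inputs are the defining relations of the cleaving,
\[\trho(X_{+})\rho(X_{-})=\varepsilon(X)1_P,\qquad \rho(X_{[+]})\trho(X_{[-]})=\overline{\varepsilon(X)}1_P,\]
the linearity of $\rho$ (right $B$-linear) and $\trho$ (left $B$-linear), the $\BB$-bilinearity coming from colinearity, and the identities \eqref{equ. inverse lamda 1}--\eqref{equ. inverse lamda 10}, \eqref{equ. inverse mu 1}--\eqref{equ. inverse mu 10} and \Cref{prop. anti left and left Galois maps}.

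\textbf{Well-definedness.} I first check that $\wedge$ and $\vee$ are well defined on $\CH\ou{\BB}$-balanced expressions. This follows from \eqref{equ. inverse lamda 10} and the $\BB$-bilinearity of $\rho,\trho$. I then check that they take values in $N$. Here I use that $\delta$ is an algebra map, that $n$ is coinvariant, and that $\rho,\trho$ are colinear; applying \eqref{equ. inverse lamda 6} to $X_+\ot X_-$ shows that the two comodule legs combine into $X_{+}\t X_{-}\t$-type expressions, which collapse to $1$. This is exactly the manipulation used in the proof of \Cref{lem. skew inverse of also colinear}.

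\textbf{First identity.} Expanding gives
\[X_+\wedge(X_-\vee n)=\trho(X_{++})\,\rho(X_{-+})\,n\,\rho(X_{--})\,\trho(X_{+-}).\]
The goal is to bring the two inner factors into the form $\trho(Y_+)\rho(Y_-)$ for a single $Y$, so that they collapse to $\varepsilon(Y)$, after which the outer pair collapses the same way. To do this I need an identity relating
\[X_{++}\ot X_{+-}\ot X_{-+}\ot X_{--}\]
to an iterate of $\lambda^{-1}$ in which the roles of the inner and outer pairs are visible. I would derive it by applying $\id\ot\lambda$ (or $\lambda\ot\id$) to both candidate expressions, using \eqref{equ. inverse lamda 5}, \eqref{equ. inverse lamda 6} and \eqref{equ. inverse lamda 1}, and then appealing to bijectivity of $\lambda$, in the same way \Cref{prop. anti left and left Galois maps} was proved.

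Once the inner pair reads $\trho(Z_+)\rho(Z_-)$, it becomes a $B$-scalar $\varepsilon(Z)$, which is moved across $n$ by the $B$-linearity of $\trho$ and $\rho$. The remaining outer pair is then $\trho(X_+)\rho(X_-)=\varepsilon(X)$. The second identity, $X_{[+]}\vee(X_{[-]}\wedge n)=n\varepsilon(X)$, is proved by the mirror-image computation. It uses the $\mu$-versions \eqref{equ. inverse mu 5}, \eqref{equ. inverse mu 6}, \eqref{equ. inverse mu 10} and the relation $\rho\askpr\trho=\eta\ol\varepsilon$; the $B$-scalar now emerges on the right.

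\textbf{Third identity.} Here I rewrite the left side using \eqref{equ. inverse lamda 5} in the form
\[X\o\ot X\t{}_+\ot X\t{}_- = X_+\o\ot X_+\t\ot X_-,\]
applied to the last two tensorands, together with \eqref{equ. inverse lamda 2}, which gives $X\o{}_+\ot X\o{}_- X\t=X\ot 1$. The aim is to make the middle factor $\rho(X\o{}_-)\trho(X\t{}_+)$ appear as $\rho(Y_{[+]})\trho(Y_{[-]})$ for a suitable $Y$. For this I would use the mixed identity \eqref{mixed coass 2}, i.e.\ $X_{-[+]}\ot X_{-[-]}\ot X_+$ versus $X\o{}_-\ot X\t\ot X\o{}_+$, to trade $X\o{}_-\ot X\t$ for $X_{-[+]}\ot X_{-[-]}$. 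Then $\rho(X_{-[+]})\trho(X_{-[-]})=\overline{\varepsilon(X_-)}$. This $\ol B$-scalar is moved to the last tensorand across $\ou\BB$ and absorbed by \eqref{equ. inverse lamda 7}, giving $\trho(X_+)\ot 1\ot\rho(X_-)$.

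\textbf{Main obstacle.} I expect the hardest step to be the iterated-translation-map identity in the first part, because it lives in a Takeuchi-type subspace where balancing is delicate. If it resists a direct derivation, my fallback is to deduce it from the third identity: multiply the outer legs of the third identity around $n$ (middle leg inserted appropriately) and use \Cref{lem. cancelable}-style cancellation.
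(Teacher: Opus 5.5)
Your overall strategy (direct computation with the translation-map identities and the two cleaving relations) is the paper's. However, the plan for the first identity cannot work as described.

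After expansion you have $\trho(X_{++})\rho(X_{-+})\,n\,\rho(X_{--})\trho(X_{+-})$. This contains one adjacent pair $\trho(\cdot)\rho(\cdot)$ to the left of $n$ and one adjacent pair $\rho(\cdot)\trho(\cdot)$ to the right of $n$. No rewriting of indices turns the right-hand pair into the form $\trho(Z_+)\rho(Z_-)$, because the order of multiplication is fixed. That pair can only be killed by the anti-skewed relation $\rho(Y_{[+]})\trho(Y_{[-]})=\ol{\varepsilon(Y)}$, which you do not use for this identity. Reaching that form requires the mixed identity \eqref{mixed coass 2} linking $\lambda^{-1}$ and $\mu^{-1}$; no identity built from $\lambda$ alone will do.

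The iterated identity you single out as the main obstacle is simply \eqref{equ. inverse lamda 6}, which is already available. The argument then runs as follows:
\begin{itemize}
\item \eqref{equ. inverse lamda 6} turns the expression into $\trho(X_+)\rho(X_{-}\o{}_{+})\,n\,\rho(X_{-}\o{}_{-})\trho(X_{-}\t)$.
\item \eqref{mixed coass 2} for $X_-$ makes the right-hand pair $\rho(X_{--[+]})\trho(X_{--[-]})=\ol{\varepsilon(X_{--})}$.
\item This scalar lies in $\ol B$, which commutes with $N$ inside $P$. So it passes $n$ and is absorbed into $\rho(X_{-+})$ by \eqref{equ. inverse lamda 7}.
\item Only then does $\trho(X_+)\rho(X_-)n=\varepsilon(X)n$ finish the proof.
\end{itemize}

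Your version, which produces a $B$-scalar and moves it across $n$, would fail in any case. $B\subseteq N$ need not be central in $N$; this is why the two identities read $\varepsilon(X)n$ and $n\varepsilon(X)$. Also, $\rho$ need not be left $B$-linear, so the scalar cannot be absorbed that way either.

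The same problem affects the ``mirror'' identity. It also needs both cleaving relations. The relevant tools are the mixed identities \eqref{mixed coass 3} and \eqref{mixed coass 1}, since $\vee$ and $\wedge$ are still built from $\lambda^{-1}$. The pure $\mu$-identities you list are not what is needed.

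For the third identity, \eqref{mixed coass 2} is the right first move. After it, though, the expression is $\trho(X_+)\ot\rho(X_{-[+]})\trho(X_{-[-]+})\ot\rho(X_{-[-]-})$. The middle factor still carries $\lambda^{-1}$ of $X_{-[-]}$, so $\rho\askpr\trho=\eta\ol\varepsilon$ does not apply yet. Your claimed $\rho(X_{-[+]})\trho(X_{-[-]})$ never actually occurs.

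You need \eqref{mixed coass 1} for $X_-$ to rewrite this as $\trho(X_+)\ot\rho(X_{-}\t{}_{[+]})\trho(X_{-}\t{}_{[-]})\ot\rho(X_{-}\o)$. The collapse then gives $\ol{\varepsilon(X_{-}\t)}$. This scalar is moved across $\ou\BB$ into the third leg and absorbed by the counit axiom $\ol{\varepsilon(Y\t)}Y\o=Y$. It is not absorbed by \eqref{equ. inverse lamda 7}, which would act on the first leg. The detour through \eqref{equ. inverse lamda 5} and \eqref{equ. inverse lamda 2} is not needed.
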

\begin{proof}It is not hard to see both the actions are well defined as both $\rho$ and $\trho$ are $\BB$-bimodule maps and $N$ commutes with $\BB$ as (subalgebras in $P$). Indeed, $\delta(n\,\overline{b})=\delta(\overline{b}\,n)$ for any $n\in N$ and $\overline{b}\in\BB$. We have $X\wedge n,X\vee n\in N=P^{\co\CH}$ because $\rho$ and $\trho$ are right colinear.

    For the first equation, we have
    \begin{align*}
       X_{+}\wedge(X_{-}\vee n)
       =&\trho(X_{++})\rho(X_{-+})\,n\,\rho(X_{--})\trho(X_{+-})\\
       =&\trho(X_{+})\rho(X{}_{-}\o{}_{+})\,n\,\rho(X{}_{-}\o{}_{-})\trho(X_{-}\t)\\
       =&\trho(X_{+})\rho(X{}_{-}{}_{+})\,n\,\rho(X{}_{--[+]})\trho(X{}_{--[-]}) \\
       =&\trho(X_{+})\rho(X{}_{-}{}_{+})\,n\,\overline{\varepsilon(X{}_{--})}\\
       =&\trho(X_{+})\rho(X{}_{-})\,n\\
       =&\varepsilon(X)n.
    \end{align*}
    The second equation is similar. For the last equality, we have
      \begin{align*}
        \trho&(X\o{}_{+})\ot\rho(X\o{}_{-})\trho(X\t{}_{+})\ot\rho(X\t{}_{-})\\
        =&\trho(X{}_{+})\ot\rho(X{}_{-[+]})\trho(X{}_{-[-]+})\ot\rho(X{}_{-[-]-})\\
        =&\trho(X{}_{+})\ot\rho(X{}_{-}\t{}_{[+]})\trho(X{}_{-}\t{}_{[-]})\ot\rho(X{}_{-}\o)\\
        =&\trho(X{}_{+})\ot\overline{\varepsilon(X_{-}\t)}\ot\rho(X{}_{-}\o)\\
        =&\trho(X{}_{+})\ot 1\ot\rho(X{}_{-}).
    \end{align*}
\end{proof}
\begin{rem}
    $E:=\End(N)$ is an $R^e$-ring with $R$ mapped to left multiplication and $\ol R$ mapped to right multiplication. If we define $f^\wedge,f^\vee\colon\CH\to \End(N)$ by $f^\wedge(X)(n)=X\wedge n$ and $f^\vee(X)(n)=X\vee n$ then the two equations above read $f^\wedge\skpr f^\vee=\eta_E\varepsilon$ and $f^\vee\askpr f^\wedge=\eta_E\ol \varepsilon$ 
\end{rem}
\begin{lem}\label{def. anti cleft extension}
    Let $\CH$ be a Hopf algebroid over $B$, $P$ a right $\CH$-comodule algebra with $B\subseteq   N=P{}^{co\CH}$. Assume that  $P$ is a faithfully flat left $B$-module. Then the following are equivalent:
    \begin{itemize}
        \item [(1)] There is an unital right $\CH$-comodule map $\rho:  \CH\to P$, such that $\rho$ is  right $B$-linear, and  $i: \CH\ot_{B}P\to P\diamond_{B}\CH$ given by $i(X\ot_{B} p)= \rho(X\o)p\di X\t$ is bijective.
 \item[(2)]  There is an unital right $\CH$-comodule map $\trho:  \CH\to P$, such that $\trho$ is left $B$-linear, and  $\ti: P\diamond_{B}\CH\to\CH\ot_{B}P$ given by $\ti(p\ot X)= X_{[+]}\ot \trho(X_{[-]})\,p$ is bijective.
 \item[(3)] $N\subseteq P$ is an anti $\CH$-cleft extension.
    \end{itemize}
\end{lem}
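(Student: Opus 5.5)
The strategy is to prove (3)$\Rightarrow$(1),(2) by writing down explicit inverses. For the converse directions, we use the colinearity of the inverses together with the bijections of \Cref{skew bijection for colinear maps}, plus faithful flatness, to extract the missing half of the cleaving.

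\textbf{(3)$\Rightarrow$(1) and (2).} Take a unital cleaving $(\rho,\trho)$ with $\trho\askpr\rho$-type identities
\[
\trho(X_{+})\rho(X_{-})=\varepsilon(X)1_P,\qquad \rho(X_{[+]})\trho(X_{[-]})=\ol{\varepsilon(X)}1_P .
\]
Set
\[
i^{-1}(p\di X)=X_{[+]}\ot_B \trho(X_{[-]})\,p ,
\]
which is exactly $\ti$. Then:
\begin{itemize}
\item $i\circ\ti=\id$ follows from \eqref{equ. inverse mu 1} and the anti-skewed identity $\rho(X_{[+]}{}\o)\trho(X_{[-]})\di X_{[+]}{}\t$. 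Here one first uses colinearity of $\rho$ to move the comultiplication, then \eqref{equ. inverse mu 5}.
\item $\ti\circ i=\id$ reduces, via \eqref{mixed coass 1}-type identities (specifically \eqref{equ. inverse mu 2}), to $X\t{}_{[+]}\ot\trho(X\t{}_{[-]})\rho(X\o)p$. Rewriting through the skewed identity, using colinearity of $\trho$, gives $X\ot p$.
\end{itemize}
Well-definedness on the balanced tensor products uses the $B$/$\ol B$-linearity of $\rho,\trho$ together with \eqref{equ. inverse mu 9}, \eqref{equ. inverse mu 10}. So (3) gives (1) and (2) simultaneously.

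\textbf{(1)$\Rightarrow$(3).} Given $\rho$ with $i$ bijective, define a candidate $\trho$ by
\[
\trho(X):=(\varepsilon\ot_B P)\bigl(i^{-1}(1\di X)\bigr),
\]
or more precisely read off the $P$-component after projecting with the counit. The steps are:
\begin{enumerate}
\item Show that $i^{-1}$ is a right $\CH$-comodule map and left $P$-linear in the appropriate sense. Then, by a \Cref{skew bijection for colinear maps}-style correspondence (version \eqref{equ. correspondence 3}), $i^{-1}(1\di X)=X_{[+]}\ot\trho(X_{[-]})$ for a unique left $B$-linear $\trho$. This is where faithful flatness of $P$ over $B$ enters: it lets us identify coinvariants of $\CH\ot_B P$ with $P$.
\item From $i\circ i^{-1}=\id$, apply the result to $1\di X$ and then apply $P\di\varepsilon$ to obtain $\rho(X_{[+]})\trho(X_{[-]})=\ol{\varepsilon(X)}$, i.e.\ $\rho\askpr\trho=\eta\ol\varepsilon$.
\item Similarly, $i^{-1}\circ i=\id$ evaluated at $X\ot 1$ gives the other identity $\trho(X_+)\rho(X_-)=\varepsilon(X)$, after applying $\lambda$-type manipulations \eqref{mixed coass 2}.
\item Colinearity of $\trho$ then follows from \Cref{lem. skew inverse of also colinear}, and linearity from \Cref{skew inverse and linearity}.
\end{enumerate}

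\textbf{(2)$\Rightarrow$(3).} This is symmetric: define $\rho$ from $\ti^{-1}(X\ot 1)=\rho(X\o)\di X\t$ via \Cref{free comodule bijection}, then read off the two inverse identities as before.

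\textbf{Reversibility.} Since (3) requires reversibility of $P$, it must be produced: the reverse coaction should be
\[
p\rz\ot p\rmo:=p\z{}\,\rho(p\o{}_{[+]})\,\trho\text{-terms}\ot\ldots,
\]
built by transporting the canonical reversibility of $\CH$ through $i$. In practice, $\psi$ for $P$ factors as a composite of $i$, $\ti$ and the bijection $\mu$ for $\CH$, giving $\psi^{-1}$ explicitly.

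\textbf{Main obstacle.} I expect the main obstacle to be step (1) of (1)$\Rightarrow$(3). There one must show that $i^{-1}(1\di X)$ lies in the image of the correspondence, i.e.\ that the relevant element is coinvariant so that it descends to $P$. This requires faithful flatness of $P$, and the plan is to mimic the coinvariance computation in the proof of \eqref{equ. correspondence 2}. The reversibility construction is the second delicate point.
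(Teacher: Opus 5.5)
Apart from the reversibility paragraph, your plan is the paper's proof. You arrange the cycle as (3)$\Rightarrow$(1),(2) and (1),(2)$\Rightarrow$(3) rather than (1)$\Rightarrow$(3)$\Rightarrow$(2)$\Rightarrow$(1), but the ingredients are the same:
\begin{itemize}
\item $\trho$ is obtained by feeding the colinear map $F=i^{-1}(1\di -)$ into \eqref{equ. correspondence 3}. This is the only use of faithful flatness of $P$, and it is already covered by \Cref{skew bijection for colinear maps}, so your ``main obstacle'' needs no re-proof along the lines of \eqref{equ. correspondence 2}.
\item $\rho\askpr\trho=\eta\ol\varepsilon$ is read off from $i\,i^{-1}=\id$, and $\trho\skpr\rho=\eta\varepsilon$ from $i^{-1}i=\id$ via \eqref{mixed coass 1} and \eqref{equ. correspondence 3}.
\item Colinearity comes from \Cref{lem. skew inverse of also colinear}.
\item $i$ and $\ti$ are exhibited as mutually inverse.
\item $\rho=(\id\ot\varepsilon)\circ\ti^{-1}(-\ot 1)$ is extracted via \Cref{free comodule bijection}.
\end{itemize}

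Some details need fixing.
\begin{itemize}
\item Your first formula $\trho(X)=(\varepsilon\ot_B P)(i^{-1}(1\di X))$ is not well defined, since $\varepsilon(Xb)\neq\varepsilon(X)b$ in general, so $\varepsilon\ot P$ does not descend to $\CH\ou BP$. For a Hopf algebra it would in fact return $\trho\circ S^{-1}$. The correct recipe is the inverse half of \eqref{equ. correspondence 3}: $1\ot\trho(X)=X_+\,i^{-1}(1\di X_-)$.
\item The linearity you need is right (not left) $P$-linearity of $i^{-1}$, which gives $i^{-1}(p\di X)=i^{-1}(1\di X)\,p$.
\item The $\ol B$-bimodule linearity of $\trho$ is needed before $\trho\skpr\rho$ is even defined and before \Cref{lem. skew inverse of also colinear} applies. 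It must come from the construction, via \eqref{equ. anti cleaving map 5} and \eqref{equ. inverse lamda 9}, as in the paper. \Cref{skew inverse and linearity} cannot supply it, because it presupposes $\trho\in\Hom_{B-\ol B}$.
\item In (3)$\Rightarrow$(1),(2) no colinearity is used. $i\ti=\id$ is just \eqref{equ. inverse mu 5} plus $\rho\askpr\trho=\eta\ol\varepsilon$. $\ti i=\id$ is \eqref{mixed coass 1} plus $\trho\skpr\rho=\eta\varepsilon$ and \eqref{equ. inverse mu 7}.
\end{itemize}

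The reversibility paragraph is the real problem: as written it is not an argument. You are right that \Cref{Anti-cleft extensions} demands a reversible comodule. However, the paper's proof never constructs a reverse coaction; it only produces the cleaving pair $(\rho,\trho)$, so it tacitly reads (3) as ``a cleaving exists''. Your sketch does not fill this.
\begin{itemize}
\item No formula for $p\rz\ot p\rmo$ is actually given, and no factorization of $\psi$ through $i$, $\ti$ and $\mu$ is exhibited.
\item The bijection of $\CH$ that $\psi$ naturally matches is $\lambda$, not $\mu$: for $P=N\ou B\CH$ one has $\psi=N\ou B\lambda$, as in the proof of \Cref{classical char of cleft extensions}.
\item That identification needs a normal basis. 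The paper's only construction of one goes through $\pi_\rho(p)=p\rz\rho(p\rmo)$ from \Cref{Doi-integrals}, i.e.\ through the very reverse coaction you are trying to build, so invoking it here would be circular.
\item In the Hopf algebra case $p\rz\ot p\rmo=p\z\ot S(p\o)$. This needs an antipode on the leg adjacent to $p\z$, which is not obviously obtainable from $X_\pm$, $X_{[\pm]}$ applied to $p\o$.
\end{itemize}
So either keep reversibility as a standing assumption, as the paper effectively does, or you need a genuinely new argument at that point.
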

\begin{proof}
   
    We can check that $i$ and (and $\ti$, respectively) is well defined as  $\rho$ is right $B$-linear (and $\trho$ is left $B$-linear, respectively). 
    
    First, we show $(1)\Rightarrow(3)$. We define $F:=i^{-1}|_{1_{P}\di\CH}:\CH\to \CH\ot_{B}P, X\mapsto  X^{\delta}\ot_{B}X^{\gamma}$. We can see that \begin{align}\label{equ.anti cleaving map 1}
\rho(X^{\delta}\o)X^{\gamma}\di X^{\delta}\t=&1\di X,\\
\label{equ. anti cleaving map 2}
X\t{}^{\delta}\ot_{B} X\t{}^{\gamma}\rho(X\o)=&X\ot_{B}1,\\
\label{equ. anti cleaving map 5}
(a\Bar{a'}X b\Bar{b'})^{\delta}\ot_{B}(a\Bar{a'}X b\Bar{b'})^{\gamma}=&\Bar{a'} X^{\delta}\Bar{b'}\ot_{B}\Bar{b} X^{\gamma}\Bar{a},
\end{align}
for any $b\in B$, $X\in \CH$. As $F$ is right $\CH$-colinear,  by Proposition \ref{skew bijection for colinear maps}, we can define a left $B$-linear map $\trho:\CH\to P$ by $\trho(X)=X_{+}\,F(X_{-})=X_{+}X_{-}{}^{\delta}\ot X_{-}{}^{\gamma}$. As a result, $F(X)=X_{[+]}\ot \trho(X_{[-]})$ and $i^{-1}(p\ot X)=X_{[+]}\ot \trho(X_{[-]})p$. It follows from (\ref{equ. anti cleaving map 5}) that $\trho$ is a $\BB$-bimodule map. Therefore, (\ref{equ.anti cleaving map 1})  can be written as $\rho(X_{[+]}\o)\, \trho(X_{[-]})\ot X_{[+]}\t=1\ot X$. By applying $(\id\ot \varepsilon)$ on both sides, we get $\rho(X_{[+]})\trho(X_{[-]})=\overline{\varepsilon(X)}$. Similarly, (\ref{equ. anti cleaving map 2}) can be written as $X{}_{[+]}\ot\trho(X{}_{[-]+})\rho(X{}_{[-]-})=X\t{}_{[+]}\ot\trho(X\t{}_{[-]})\rho(X\o)=X\ot 1$. By Proposition \ref{skew bijection for colinear maps}, $\trho(X{}_{+})\rho(X{}_{-})=X_{+}X{}_{-[+]}\ot\trho(X{}_{-[-]+})\rho(X{}_{-[-]-})=X_{+}X_{-}=\varepsilon(X)$. Also, by Lemma \ref{lem. skew inverse of also colinear}, $\trho$ is right $\CH$-colinear.

Second, we show $(3)\Rightarrow (2)$. Given cleaving map $(\rho, \trho)$, we can define $\ti^{-1}(X\ot p):=\rho(X\o)p\di X\t$. Indeed, on the one hand,
\begin{align*}
    \ti\circ \ti^{-1}(X\ot p)=&\ti(\rho(X\o)p\di X\t)=X\t{}_{[+]}\ot \trho(X\t{}_{[-]})\rho(X\o)p\\
    =&X{}_{[+]}\ot \trho(X{}_{[-]+})\rho(X{}_{[-]-})p=X\ot p.
\end{align*}
On the other hand,
\begin{align*}
    \ti^{-1}\circ \ti(p\ot X)=&\ti^{-1}(X_{[+]}\ot \trho(X_{[-]})p)= \rho(X_{[+]}\o)\trho(X_{[-]})p\ot X_{[+]}\t\\
    =&\rho(X\o{}_{[+]})\trho(X\o{}_{[-]})p\ot X\t=\overline{\varepsilon(X\o)}p\ot X\t\\
    =&p\ot X.
\end{align*}

Finally, we show $(2)\Rightarrow (1)$. We observe that 
\begin{align*}
    \Hom^{\CH}(\CH, P\di \CH)&\cong \Hom_{\BB}(\CH, P)\\
    \Phi&\mapsto (\id\ot\varepsilon)\circ \Phi\\
    (\phi\ot \id)\circ\Delta&\mapsfrom \phi
\end{align*}
Define $F':=\ti^{-1}|_{\CH\ot_{B}1_{P}}:\CH\to P\di\CH$, which is right $\CH$-colinear. Then we can define a left $\BB$-linear map  $\rho:=(\id\ot \varepsilon)\circ F'$. Hence $F'(X)=\rho(X\o)\ot X\t$ and $\ti^{-1}(X\ot p)=\rho(X\o)p\ot X\t$. By the same method as above we can see $F'(Xb)=F'(X)b$ (where the right $B$-module of $P\di\CH$ is given by the right $B$-module on $P$), which results in $\rho$ is right $B$-linear. Moreover, as $\ti$ is invertible, we can also get $\rho(X_{[+]})\trho(X_{[-]})=\overline{\varepsilon(X)}$ and $\trho(X_{+})\rho(X_{-})=\varepsilon(X)$. Also, by Lemma \ref{lem. skew inverse of also colinear}, $\rho$ is right $\CH$-colinear.
\end{proof}

\subsection{Three characterizations of anti-cleft extensions}

The following is  a generalization of a result of Doi \cite{Doi}:
\begin{lem}\label{Doi-integrals}
Let $P$ be a  reversible right $\CH$-comodule algebra and $N=P^{\co\CH}$.
\begin{enumerate}
    \item If $\rho\colon \CH\to P$ is right colinear and unital  then the equalizer
    \[N\to P\rightrightarrows P\di\CH\]
    defining the coinvariants is split by left $N$-module maps; in particular 
    $\pi=\pi_\rho\colon P\to N, \quad p\mapsto p\rz\rho(p\rmo)$ is a left $N$-module map splitting the inclusion $N\subset P$. In particular, for every right $N$-module $M$ the unit $M\to (M\ou NP)^{\co\CH}$ of the adjunction in \Cref{hopf module adjunction} is an isomorphism.
    \end{enumerate}
    Now assume that $P$ is anti-right Galois. Then
    \begin{enumerate}[resume]
    \item If $\pi\colon P\to N$ is a left $N$-module map splitting the inclusion $N\subset P$, then 
    $\rho=\rho_\pi\colon\CH\to P;\quad X\mapsto X\yi{}\pi(X\er{})$ is right $\CH$-colinear and unital. 
    \item If $\rho\colon\CH\to P$ is right colinear and unital, then $\rho_{\pi_\rho}=\rho$.
    \item For a left linear splitting $\pi\colon P\to N$ we have $\pi_{\rho_\pi}=\pi$
\end{enumerate}
\end{lem}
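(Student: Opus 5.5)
For part (1) I will check directly that $\pi(p)=p\rz\rho(p\rmo)$ is well defined, lands in $N$, is left $N$-linear, and restricts to the identity on $N$. Well-definedness over $\ot_{\BB}$ needs $\rho$ to be left $\BB$-linear, which holds because $\rho$ is a comodule map, hence a $\BB$-bimodule map.

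To see that $\pi(p)$ is coinvariant, I will compute
\[\pi(p)\z\ot\pi(p)\o=p\rz\z\rho(p\rmo\o)\ot p\rz\o\, p\rmo\t ,\]
using colinearity of $\rho$ and multiplicativity of the coaction. I then rewrite $p\rz\z\ot p\rz\o\ot p\rmo$ via \eqref{equ. regular map 7} as $p\z\ot p\o{}_+\ot p\o{}_-$. Applying \eqref{equ. inverse lamda 5} in the form $X_{+}\ot X_{-}\o\ot X_{-}\t=X_{++}\ot X_{-}\ot X_{+-}$ (i.e.\ \eqref{equ. inverse lamda 6}) and then \eqref{equ. inverse lamda 1}/\eqref{equ. inverse lamda 8}, the $\CH$-leg should collapse to $1$, giving $\pi(p)\ot 1$. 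This is the step I expect to be the main bookkeeping obstacle: the legs must sit in the correct balanced tensor products. I will organize it with \eqref{equ. regular anti translation map 3}-style identities for the reverse coaction.

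Left $N$-linearity follows from \eqref{equ. regular map 3}, since for $n\in N$ we have $n\rz\ot n\rmo=n\ot 1$, which is seen by applying $\psi$. Likewise $\pi(n)=n\rho(1)=n$ by unitality. Because $\pi$ is a left $N$-linear retraction of $N\subset P$, the unit $M\to(M\ou NP)^{\co\CH}$ is split: the map $M\ou N\pi$ restricted to coinvariants is a left inverse. It is also surjective, since for a coinvariant element $\sum m_i\ot p_i$ one can apply $(M\ou NP)\ni m\ot p\mapsto m\ot p\rz\rho(p\rmo)$. On coinvariants this map acts as the identity, because $\psi$ is computed inside $M\ou NP$. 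So the unit is an isomorphism.

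For (2), $\rho_\pi$ is well defined by \eqref{equ. anti translation map 4.5}, since $\pi$ is left $N$-linear. It is unital by $\yi1\ot\er1=1\ot1$. Colinearity follows from \eqref{equ. anti translation map 1}, because $\pi$ takes values in coinvariants, and this gives $\rho_\pi(X)\z\ot\rho_\pi(X)\o=\rho_\pi(X\o)\ot X\t$.

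For (3), I will compute
\[\rho_{\pi_\rho}(X)=\yi X\,\er X\rz\rho(\er X\rmo)=\yi{X\t}\er{X\t}\rho(X\o)\]
by \eqref{equ. regular anti translation map 2}. This equals $\overline{\varepsilon(X\t)}\rho(X\o)=\rho(X)$ by \eqref{equ. anti translation map 6.5}.

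For (4), I will compute
\[\pi_{\rho_\pi}(p)=p\rz\yi{p\rmo}\pi(\er{p\rmo})=\pi\bigl(p\rz\yi{p\rmo}\er{p\rmo}\bigr),\]
using left $N$-linearity of $\pi$, which is legitimate because by \eqref{equ. regular anti translation map 1} the element $p\rz\yi{p\rmo}\ot_N\er{p\rmo}$ equals $1\ot_N p$. Hence the expression equals $\pi(p)$.
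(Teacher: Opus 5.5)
Your proposal is correct. Parts (2)--(4) follow the paper's argument. In (4), the clean justification is to apply the well-defined map $P\ou NP\to P$, $a\ot b\mapsto a\pi(b)$, to $p\rz\yi{p\rmo}\ot_N\er{p\rmo}=1\ot_N p$. The factor $p\rz\yi{p\rmo}$ is not in $N$, so it cannot be pulled into $\pi$ termwise as your middle expression suggests.

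The differences are all in (1).

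\emph{Coinvariance of $\pi(p)$.} The paper uses coassociativity of the reversed left coaction, $p\rz\ot p\rmo\o\ot p\rmo\t=p\rz\rz\ot p\rmo\ot p\rz\rmo$, followed by $\psi\psi^{-1}=\id$ applied to $p\rz$. This collapses the $\CH$-leg in one step and involves no translation maps. Your route via \eqref{equ. regular map 7}, \eqref{equ. inverse lamda 6} and \eqref{equ. inverse lamda 8} also works. However, it ends at $\overline{\varepsilon(p\o{}_+)}\,p\z\rho(p\o{}_-)\ot 1$. You need one more step to recognize the first leg as $\pi(p)$: apply the counit to the middle leg of \eqref{equ. regular map 7}. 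Also, \eqref{equ. regular anti translation map 3} is not available here, since (1) assumes no Galois property.

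\emph{The unit of the adjunction.} The paper exhibits a second splitting map $\phi\colon P\di\CH\to P$, $\phi(p\ot X)=p\rz\rho(p\rmo X)$. Together with $\pi$, it makes the fork a split equalizer of left $N$-modules, which every functor $M\ou N(-)$ preserves. You argue instead as follows. Since $\psi$ is left $N$-linear, $M\ou N\psi$ is invertible, so a coinvariant $x=\sum_i m_i\ot p_i$ satisfies $x\rz\ot x\rmo=x\ot 1$. Applying $q\ot X\mapsto q\rho(X)$ then gives $x=\sum_i m_i\ot\pi(p_i)$, which lies in the image of the unit. This direct argument is valid and transparent.

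It never produces $\phi$, however. So the first assertion of (1) — that the equalizer itself is split, not merely that $N\subset P$ has a left $N$-linear retraction — is not actually proved. To close this, define $\phi$ as above. Check that $\phi\circ\delta=\id$ using \eqref{equ. regular map 8} and \eqref{equ. inverse mu 8}, that $\phi(p\ot 1)=\pi(p)$, and that $\phi$ is left $N$-linear.
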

\begin{proof}
For (1),    $\pi$ is well defined as a map to $P$ because $\rho$ is left $\ol B $-linear. Furthermore
\begin{align*}
\delta(p\rz\rho(p\rmo))=&p\rz\z\rho(p\rmo)\z\ot p\rz\o\rho(p\rmo)\o\\
    =&p\rz\z\rho(p\rmo\o)\ot p\rz\o p\rmo\t\\
    =&p\rz\rz\z\rho(p\rmo)\ot p\rz\rz\o p\rz\rmo\\
    =&p\rz\rho(p\rmo)\ot 1.
\end{align*}
So $\pi$  has its image in $N$; clearly it is left $N$-linear and satisfies $\pi(n)=n$ for all $n\in N$. One can check that the map $\phi\colon P\di\CH\to P$ defined by $\phi(p\ot X)=p\rz\rho(p\rmo X)$ together with $\pi$ splits the equalizer. 

For (2), we assume that $P$ is right anti-Galois. Given $\pi\colon P\to N$ left $N$-linear and unital, $\rho_\pi$ is obviously unital, and it is right colinear because of \cref{equ. anti translation map 1}.

For (3), we have $\rho_{\pi_\rho}(X)=X\yi{}{X\er{}}\rz\rho({X\er{}}\rmo)=\yi{X\t}\er{X\t}\rho( X\o)=\ol{\varepsilon(X\t)}\rho(X\o)=\rho(X)$
using \eqref{equ. regular anti translation map 2}.

For (4), we  have $\pi_{\rho_\pi}(p)=p\rz\yi{p\rmo}\pi(\er{p\rmo})=\pi(p)$ by \eqref{equ. regular anti translation map 1}. 
\end{proof}

\begin{defi}\label{def. normal basis}
  A right comodule algebra $P$ over a $B$-Hopf algebroid $\CH$ has the normal basis property if $B$ is a subalgebra of $N=P^{\co\CH}$ and $P\cong N\ot_B\CH$ as right $\CH$-comodule and left $N$-module.
\end{defi}
We remark that under the assumption that $\CH$ is faithfully flat in particular as a left $B$-module, a comodule algebra with a normal basis is left faithfully flat over its coinvariant subalgebra.

\begin{defi}\cite{BB}\label{defi. 2 cocycle and twisted module algebra}
    Let $\CH$ be a left bialgebroid over $B$ and $N$ be a $B$-ring. We say $\CH$ measures $N$, if there is a linear map (called measuring) $\la: \CH\ot{} N\to N$, $X\ot n\mapsto X\la n$, such that 
    \begin{align}
(b \overline{b'}X)\la n&=b(X\la n)b', \label{2-cocycle 1}\\
X\la 1_{N}&=\varepsilon(X), \label{2-cocycle 2}\\
X\la (mn)&=(X\o\la m)(X\t \la n), \label{2-cocycle 3}
\intertext{
    for any $m,n\in M$, $b, b'\in B$ and $X\in \CH$.
    Given such a measuring $\la$, a map $\sigma: \CH\ot_{\BB}\CH\to N$ is called a 2-cocycle of $\CH$ over $N$, if it satisfies:}
\sigma(b\overline{b'}X,Y)&=b\sigma(X,Y)b',
\label{2-cocycle 4}\\
\sigma(X\ot_{\BB}1)&=\sigma(1\ot_{\BB}X)=\varepsilon(X),
\label{2-cocycle 5}\\
(X\o\la b)\,\sigma(X\t,Y)&=\sigma(X,b\,Y),
\label{2-cocycle 6}\\
X\o\la\sigma(Y\o,Z\o)\sigma(X\t,Y\t Z\t)
&=\sigma(X\o,Y\o)\sigma(X\t Y\t,Z),
\label{2-cocycle 7}\\
\sigma(X\o,Y\o)(X\t\,Y\t\la n)
&=(X\o\la(Y\o\la n))\sigma(X\t,Y\t)
\quad \textup{\&}\quad 1\la n=n,
\label{2-cocycle 8}
\intertext{
        for any $X, Y, Z\in \CH$.  We call $\sigma$ invertible if there is a map $\sigma^{-1}:\CH\ot_{B}\CH\to N$ such that }
\sigma^{-1}(b\overline{b'}X,Y)
&=b\,\sigma^{-1}(X,Y)\,b',
\label{2-cocycle 9}\\
\sigma^{-1}(X\o,Y)(X\t\la b)
&=\sigma^{-1}(X,\overline{b}Y),
\label{2-cocycle 10}\\
\sigma(X\o,Y\o)\sigma^{-1}(X\t,Y\t)
&=X\la(Y\la 1),
\label{2-cocycle 11}\\
\sigma^{-1}(X\o,Y\o)\sigma(X\t,Y\t)
&=(XY)\la 1.
\label{2-cocycle 12}
\end{align}    
\end{defi}

\begin{rem}
By \cite[Lemma 4.10]{BB}   the inverse $\sigma^{-1}$ satisfies
\[X\la \sigma^{-1}(Y, Z)=\sigma(X\o, Y\o\,Z\o)\sigma^{-1}(X\t\,Y\t, Z\t)\sigma^{-1}(X\th, Y\th).\]
We can derive that $\sigma^{-1}$ satisfies the `right' 2-cocycle condition:
\begin{align}\label{equ. right cocycle}
  \sigma^{-1}(X\o, Y\o Z\o)(X\t\la \sigma^{-1}(Y\t, Z\t))=\sigma^{-1}(X\o Y\o, Z)\sigma^{-1}(X\t, Y\t).
\end{align}
It is not hard to see the above equality is well defined.
We have
\begin{align*}
    \sigma^{-1}&(X\o, Y\o Z\o)(X\t\la \sigma^{-1}(Y\t, Z\t))\\
    =&\sigma^{-1}(X\o, Y\o Z\o)\sigma(X\t, Y\t\,Z\t)\sigma^{-1}(X\th\,Y\th, Z\th)\sigma^{-1}(X\fo, Y\fo)\\
    =&\varepsilon(X\o Y\o Z\o)\sigma^{-1}(X\t\,Y\t, Z\t)\sigma^{-1}(X\th, Y\th)\\
    =&\sigma^{-1}(X\o Y\o, Z)\sigma^{-1}(X\t, Y\t).
\end{align*}
    
\end{rem}

\begin{LD}\cite{BB}\label{Lemma. crossed product}
    \begin{enumerate}
        \item 
    Let $(\sigma,\la)$ be a map satisfies (\ref{2-cocycle 1})-(\ref{2-cocycle 6}). Then $N\ot_{B}\CH$ is an associative algebra with product
    \[(n\ot X)(m\ot Y)=n\,(X\o\la m)\, \sigma(X\t, Y\o)\ot X\th\, Y\t\]
    and unit $1\ot 1$
    if and only if $(\sigma,\la)$ satisfy (\ref{2-cocycle 7})-(\ref{2-cocycle 8}).
    We denote it by  $N\#_{\sigma}\CH$ and call it a crossed product. The crossed product is a $\CH$-comodule algebra with a normal basis.
    \item Conversely, every $\CH$-comodule algebra with a normal basis is a crossed product. 
    \end{enumerate}
\end{LD}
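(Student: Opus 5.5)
For part (1), I will verify the algebra axioms for the product on $N\ot_B\CH$ directly, with each Sweedler expression defined using the module conditions. \textbf{Well-definedness} comes first. The formula must respect the balancing $nb\ot X=n\ot bX$ on both factors. For the left factor I use \eqref{2-cocycle 1}, \eqref{2-cocycle 4} and the $\times_B$-structure of $\Delta$. For the right factor, moving $b$ from $m$ onto $Y$ is exactly \eqref{2-cocycle 3} with $m=mb$ combined with \eqref{2-cocycle 6}: $(X\o\la mb)\sigma(X\t,Y)=(X\o\la m)(X\t\la b)\sigma(X\th,Y)=(X\o\la m)\sigma(X\t,bY)$.

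\textbf{Unitality.} $(1\ot1)(m\ot Y)=(1\la m)\sigma(1,Y\o)\ot Y\t$, which equals $m\ot Y$ iff $1\la n=n$ by \eqref{2-cocycle 5}. Similarly $(n\ot X)(1\ot 1)=n\,\varepsilon(X\o)\sigma(X\t,1)\ot X\th=n\ot X$, using \eqref{2-cocycle 2} and \eqref{2-cocycle 5}.

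\textbf{Associativity.} I expand $((n\ot X)(m\ot Y))(l\ot Z)$ and $(n\ot X)((m\ot Y)(l\ot Z))$. Using \eqref{2-cocycle 3}, measuring distributes over the product in the second expression. The two sides then differ by two pieces: the term $X\la(Y\la l)$ versus $(XY)\la l$ conjugated by $\sigma$, which is controlled by \eqref{2-cocycle 8}, and the cocycle terms, controlled by \eqref{2-cocycle 7}. So (7)--(8) imply associativity. For the converse, evaluating associativity on $(1\ot X)(1\ot Y)(l\ot 1)$ and applying $\id\ot\varepsilon$ recovers \eqref{2-cocycle 8}, and evaluating it on $(1\ot X)(1\ot Y)(1\ot Z)$ recovers \eqref{2-cocycle 7}. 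Here $\id_N\ot_B\varepsilon\colon N\ot_B\CH\to N$ makes sense after the $B$-module twisting.

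The main obstacle throughout is that the triple Sweedler expressions live in Takeuchi-type products where $\alpha,\alpha'$ need not be isomorphisms. I would handle this by writing all maps on $\CH\times_B\CH\times_B\CH$ and checking they factor through the relevant quotients before invoking coassociativity, as in \cite{BB}.

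\textbf{Comodule structure and normal basis.} The coaction $n\ot X\mapsto n\ot X\o\ot X\t$ is multiplicative because $\Delta$ is an algebra map and $\la,\sigma$ take values in $N$. It is unital, and $N\ot_B\CH$ is visibly the normal basis over coinvariants $N\ot 1$. Identifying the coinvariants with $N$ uses $\id\ot\varepsilon$ and reversibility/\Cref{Doi-integrals}, or the flatness cited in \cite{BB}.

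For part (2), given $\theta\colon N\ot_B\CH\cong P$ as left $N$-modules and right comodules, I set $\rho(X)=\theta(1\ot X)$, which is right colinear and unital, and $\pi=(\id\ot\varepsilon)\theta^{-1}$. I then define
\[X\la n=\pi(\rho(X\o)\,n\,\ldots)\]
following \cite{BB}: $X\la n=\pi(\rho(X)n)$ and $\sigma(X,Y)=\pi(\rho(X)\rho(Y))$. Transport of structure via $\theta$ shows the multiplication of $P$ equals the crossed product formula. Part (1) then gives (7)--(8), and (1)--(6) are checked from colinearity and $N$-linearity of $\theta$.
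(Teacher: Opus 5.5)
The paper gives no proof of its own here; it cites \cite{BB}. Your outline is the standard argument, and the recovery formulas you use in (2), $X\la n=\pi(\rho(X)n)$ and $\sigma(X,Y)=\pi(\rho(X)\rho(Y))$, are exactly the ones the paper records later. Your treatment of (1) is fine. Testing associativity on $(1\ot X)(1\ot Y)(l\ot 1)$ and on $(1\ot X)(1\ot Y)(1\ot Z)$, then applying $\id\ot\varepsilon$ via $(\id\ot\varepsilon)((n\ot X)(m\ot Y))=n(X\o\la m)\sigma(X\t,Y)$, does recover \eqref{2-cocycle 8} and \eqref{2-cocycle 7}. Identifying the coinvariants of $N\ou B\CH$ with $N\ot 1$ needs only the map $n\ot X\ot Y\mapsto n\ot\varepsilon(X)Y$ applied to $\delta(p)=p\ot 1$. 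Reversibility, \Cref{Doi-integrals} and flatness play no role there.

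There is, however, one step in (2) that fails as written: $\rho(X)=\theta(1\ot X)$ need not be unital. If $\theta$ is a normal basis and $u\in N$ is invertible, then $p\mapsto\theta(p)u$ is again a left $N$-linear, colinear isomorphism, since $u$ is coinvariant. It sends $1\ot 1$ to $\theta(1\ot 1)u$. Without $\rho(1)=1$ you lose \eqref{2-cocycle 5} and $1\la n=n$. You also cannot invoke part (1), which presupposes the unit $1\ot 1$.

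The fix is as follows.
\begin{enumerate}
\item Set $u:=\theta(1\ot 1)$; it is coinvariant.
\item Since $\theta$ is a colinear isomorphism, it maps $(N\ou B\CH)^{\co\CH}=N\ot 1$ bijectively onto $P^{\co\CH}=N$. That is, $n\mapsto nu$ is a bijection of $N$.
\item Hence $vu=1$ for some $v\in N$, and $(uv-1)u=0$ forces $uv=1$ by injectivity.
\item Replace $\theta$ by $\theta(-)u^{-1}$. Use right multiplication, so that no commutation of $u$ with $B$ is needed.
\end{enumerate}

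You should also make ``transport of structure'' explicit. Colinearity of $\theta^{-1}$, followed by $\id\ot\varepsilon\ot\id$, gives $\theta^{-1}(p)=\pi(p\z)\ot p\o$. Hence $\rho(X)m=(X\o\la m)\rho(X\t)$ and $\rho(X)\rho(Y)=\sigma(X\o,Y\o)\rho(X\t Y\t)$. Together with left $N$-linearity, these yield exactly the crossed product formula.
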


For any right $\CH$-comodule algebra $P$ and algebra map $N\to P^{\co\CH}$ there is a weak monoidal functor $\mathcal F\colon\LComod\CH\to\BiMod N$ defined by $\mathcal F(V)=P\Box^\CH V$; the weak monoidal structure is given by the product in $P$:
\[\xi\colon (P\Box^\CH V)\ou N(P\Box^\CH W)\to P\Box^\CH(V\ou BW);\quad p\ot v\ot q\ot w\mapsto pq\ot v\ot w.\]
In particular we showed in \cite[Thm.~3.31]{HS25} that $\mathcal F$ is a monoidal functor if $P$ is an anti-Galois extension of $N$, using certain faithful flatness assumptions.
\begin{prop}\label{monoidal functor}
    Let $P=N\#_\sigma\CH$ be the crossed product associated to a cocycle $(\la,\sigma)$.
    \begin{enumerate}
        \item $P\Box^\CH V\cong N\ou BV\in\BiMod N$ for any $V\in\LComod\CH$, where the left $N$-module structure on the right hand side is the obvious one, and the right $N$-module structure is given by $(n\ot v)m=n(v\mo\la m)\ot v\z$.
        \item The weak monoidal structure of the functor $\mathcal F=P\Box^\CH\text{---}$ corresponds to 
        \begin{align*}
        (N\ou BV)\ou N(N\ou BW)&\cong (N\ou BV)\ou BW\to N\ou B (V\ou BW) \\n\ot v\ot 1\ot w&\mapsto n\sigma(v\mo,w\mo)\ot v\z\ot w\z.
        \end{align*}
        \item The following are equivalent:
        \begin{enumerate}
            \item The functor $\LComod \CH\ni V\mapsto P\Box^\CH V\in\BiMod N$ is monoidal.
            \item The cocycle $\sigma$ is invertible.
            \item $N\#_\sigma\CH$ is a right anti-Galois extension of $N$.
        \end{enumerate}
        \item If $\sigma$ is invertible, then either one of \eqref{2-cocycle 11} or \eqref{2-cocycle 12} characterizes $\sigma\inv$ if \eqref{2-cocycle 9} and \eqref{2-cocycle 10} are satisfied. 
     \end{enumerate}
\end{prop}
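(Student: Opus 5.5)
For (1), I will compute the cotensor product directly using the normal basis. As a right comodule, $P=N\ou B\CH$ carries the coaction $n\ot X\mapsto n\ot X\o\ot X\t$. Hence $P\Box^\CH V$ is the equalizer of $N\ou B\CH\di V\rightrightarrows N\ou B\CH\di\CH\di V$. The map $N\ou BV\to P\Box^\CH V$ given by $n\ot v\mapsto n\ot v\mo\ot v\z$ has inverse $n\ot X\ot v\mapsto n\ot\varepsilon(X)v$, by the standard argument of \Cref{free comodule bijection}. Here $\CH\Box^\CH V\cong V$ and $N\ou B-$ commutes with the equalizer, because the equalizer is split by $\varepsilon$. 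The left $N$-action is clear. For the right action, $(n\ot v\mo\ot v\z)(m\ot 1)=n(v\mo\la m)\ot v\t\ot v\z$ (suitably indexed), using the crossed product formula with $\sigma(\cdot,1)=\varepsilon$. Applying $\varepsilon$ to the middle factor gives the stated right $N$-structure.

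For (2), I will take $n\ot v$ and $1\ot w$, multiply $(n\ot v\mo)(1\ot w\mo)$ in $N\#_\sigma\CH$, and use $X\la1=\varepsilon(X)$. This gives $n\sigma(v\mo,w\mo)\ot v\mt w\mt$ tensored with the remaining $v\z\ot w\z$. Pulling back through the isomorphism of (1) yields the stated formula. The identification $(N\ou BV)\ou N(N\ou BW)\cong(N\ou BV)\ou BW$ is the usual cancellation $N\ou NN\cong N$, transported along the module structure of (1).

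For (3), I will prove (a)$\Leftrightarrow$(b) and (b)$\Leftrightarrow$(c).

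For (b)$\Rightarrow$(a), I define the inverse of $\xi$ by $n\ot v\ot w\mapsto n\sigma^{-1}(v\mo,w\mo)\ot v\z\ot 1\ot w\z$. Well-definedness over the balanced tensor products uses \eqref{2-cocycle 9} and \eqref{2-cocycle 10}. The two composites reduce to the identity via \eqref{2-cocycle 11} and \eqref{2-cocycle 12}, combined with $X\la(Y\la1)=\varepsilon(X\varepsilon(Y))$, $(XY)\la1=\varepsilon(XY)$, and the counit axioms.

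For (a)$\Rightarrow$(b), I evaluate the inverse of $\xi$ on the universal case $V=W=\CH$ at $1\ot X\ot Y$ (this is a left comodule case). I then extract $\sigma^{-1}(X,Y)$ by applying $\varepsilon$ in the second and third legs, after moving $N$ across using the right $N$-structure. The key point is that $\xi^{-1}$ is natural and colinear, so $\xi^{-1}$ for general $V,W$ is determined by this universal value. That lets me verify \eqref{2-cocycle 9}--\eqref{2-cocycle 12} from $\xi\xi^{-1}=\id$ and $\xi^{-1}\xi=\id$.

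For (b)$\Leftrightarrow$(c), I will compare $\rcan\colon P\ou NP\to P\di\CH$ with $\xi$ for the specific comodules $V=W=\CH$. Using (1), $P\ou NP\cong(N\ou B\CH)\ou N(N\ou B\CH)$, and under this identification $\rcan$ becomes $\xi_{\CH,\CH}$ followed by an isomorphism induced by $\lambda$ or $\mu$. The candidate formula for the translation map is
\[\yi X\ot\er X=(1\ot X_{[+]}\o)\ot\bigl(\sigma^{-1}(X_{[+]}\t,X_{[-]}\o)\ot X_{[-]}\t\bigr)\]
(schematically), and I will check \eqref{equ. anti translation map 3} and \eqref{equ. anti translation map 4} from \eqref{2-cocycle 11} and \eqref{2-cocycle 12}. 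For the converse, I define $\sigma^{-1}$ from the translation map via the projection $\pi$ of \Cref{Doi-integrals} (with $\rho(X)=1\ot X$), setting $\sigma^{-1}(X,Y)=\pi\bigl(\yi{}\,\er{}\bigr)$ built from $\rtau$ of a product. I then derive the identities from \eqref{equ. anti translation map 7} and \eqref{equ. anti translation map 6.5}. This is also the direction where I would instead reduce to (a) if the direct approach is simpler.

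For (4), the argument follows the pattern of \Cref{cor. skew inverse is unique}. Given $\sigma^{-1}$ satisfying \eqref{2-cocycle 9}, \eqref{2-cocycle 10} and \eqref{2-cocycle 11}, the corresponding map is a one-sided inverse of $\xi$. Since $\xi$ is invertible, this one-sided inverse is the two-sided inverse, so \eqref{2-cocycle 12} follows, and symmetrically in the other case.

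The main obstacle is (a)$\Rightarrow$(b) and (c)$\Rightarrow$(b): producing $\sigma^{-1}$ defined on $\CH\ou B\CH$ with the correct balancing from an abstract inverse. The difficulty is the bookkeeping of $B$- versus $\BB$-balancings when applying $\varepsilon$ and the translation maps.
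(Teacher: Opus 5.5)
The overall shape of your argument matches the paper: (1), (2), (b)$\Rightarrow$(a), extracting $\sigma^{-1}$ from $\xi_{\CH,\CH}^{-1}$ by applying $\varepsilon$ to the last two legs, and (4). Two steps, however, fail as written.

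\textbf{The identity used in (b)$\Rightarrow$(a) is false.} You invoke $X\la(Y\la 1)=\varepsilon(X\varepsilon(Y))$. Since $Y\la 1=\varepsilon(Y)\in B$, this says the measuring restricts to the trivial action $X\la b=\varepsilon(Xb)$ on $B\subseteq N$. That is not among the hypotheses; it is exactly the extra condition (4) of \Cref{rem. bilinear 2-cocycle of L over N}. If it held, the right-hand sides of \eqref{2-cocycle 11} and \eqref{2-cocycle 12} would coincide, because $\varepsilon(X\varepsilon(Y))=\varepsilon(XY)$. The composite $\zeta\xi$ is still the identity, for a different reason. In $(N\ou{B}V)\ou{B}W$ the balancing uses the twisted right $B$-action from (1), so $n(v\mo\la(w\mo\la 1))\ot v\z\ot w\z=(n\ot v)\varepsilon(w\mo)\ot w\z=n\ot v\ot \varepsilon(w\mo)w\z=n\ot v\ot w$.

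\textbf{The plan for (b)$\Leftrightarrow$(c) does not work.} Your candidate translation map is wrong, not just schematic: it puts $\sigma^{-1}$ into the right tensorand with interleaved legs. Take $\CH=kG$ and $X=g$. Your formula gives $(1\#g)\ot_N(\sigma^{-1}(g,g^{-1})\#g^{-1})$, whose image under $\rcan$ is $(g\la\sigma^{-1}(g,g^{-1}))\,\sigma(g,g^{-1})\ot g$. This equals $1\ot g$ only if $g\la\sigma^{-1}(g,g^{-1})=\sigma^{-1}(g,g^{-1})$, which fails, for example, for a coboundary cocycle with nontrivial action on a commutative $N$. The correct map is $\rho(X_{[+]})\ot_N\trho(X_{[-]})$ with $\trho(X)=1\ot X$ and $\rho(X)=\sigma^{-1}(X\o{}_{[+]},X\o{}_{[-]})\ot X\t$. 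Your converse via a Doi projection is too vague to check. Reducing (c) to (a) is not available, since (c) says nothing about $\xi_{V,W}$ for general $V,W$.

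\textbf{The fix is an observation you already made and then set aside.} Under the identifications from (1), $\rcan$ becomes $n\ot X\ot Y\mapsto n\sigma(X\o,Y\o)\ot\mu(X\t\ot Y\t)$, i.e.\ $(N\ou{B}\mu)\circ\xi_{\CH,\CH}$. So (c) is equivalent to bijectivity of $\xi_{\CH,\CH}$ alone.
\begin{itemize}
\item (a)$\Rightarrow$(c) is then immediate.
\item (c)$\Rightarrow$(b) is your extraction argument run only for $V=W=\CH$. The form $\xi_{\CH,\CH}^{-1}(n\ot X\ot Y)=n\alpha(X\o,Y\o)\ot X\t\ot Y\t$ must come from left $N$-linearity of $\xi_{\CH,\CH}^{-1}$ and its colinearity for the right $\CH$-coactions on both $\CH$-legs (two applications of \Cref{free comodule bijection}). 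It cannot come from naturality of $\xi^{-1}$ in arbitrary $V,W$.
\item Likewise, \eqref{2-cocycle 9} and \eqref{2-cocycle 10} for $\alpha$ come from these linearity properties, e.g.\ $\xi_{\CH,\CH}$ commuting with left multiplication by $\ol b$ on the third leg. Only \eqref{2-cocycle 11} and \eqref{2-cocycle 12} come from applying $\varepsilon$ to $\zeta\xi=\id$ and $\xi\zeta=\id$.
\end{itemize}
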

\begin{proof}
    The isomorphism in (1) is
    \begin{align*}
        (N\ou B\CH)\Box^\CH V&\cong N\ou BV\\
        n\ot X\ot v&\mapsto n\varepsilon(X)\ot v\\
        n\ot v\mo\ot v\z&\mapsfrom n\ot v.
    \end{align*}
    It is straightforward to check the claimed $N$-linearity. The claim in (2) is also straightforward. 

    Thus (3)(a) is equivalent to
        \[\xi_{VW}\colon (N\ou BV)\ou BW\to N\ot_{B} (V\ou BW); n\ot v\ot w\mapsto n\sigma(v\mo,w\mo)\ot v\z\ot w\z\]
        being an isomorphism for all $V,W\in\LComod\CH.$

    Assume $\alpha\colon \CH\ou B\CH\to B$ satisfies the analog of \eqref{2-cocycle 9} and \eqref{2-cocycle 10}. Then we can define 
    \begin{align*}
      \zeta_{VW}\colon N\ou B(V\ou BW)&\to (N\ou BV)\ou B W\\
      n\ot v\ot w&\mapsto n\alpha(v\mo,w\mo)\ot v\z\ot w\z.
    \end{align*}
    Indeed it is no problem to define 
    \begin{align*}
        \beta\colon N\ou BV\ou B\CH\di W&\mapsto (N\ou BV)\ou BW\\
        n\ot v\ot Y\ot w&\mapsto n\alpha(v\mo,Y)\ot v\z\ot w
    \end{align*}
    using \eqref{2-cocycle 9}, and then we have
    \begin{multline*}
        \beta(n\ot v\ot \ol bY\ot w)=n\alpha(v\mo,\ol bY)\ot v\z\ot w
        =n\alpha(v\mt,Y)(v\mo\la b)\ot v\ot w
        \\=(n\alpha(v\mo,Y)\ot v\z)b\ot w
        =n\alpha(v\mo,Y)\ot v\z\ot bw
    \end{multline*}
    using \eqref{2-cocycle 10}, proving that $\zeta$ is well-defined. We find that
    \begin{multline*}
        \zeta_{VW}\xi_{VW}(n\ot v\ot w)=\zeta(n\sigma(v\mo,w\mo)\ot v\z\ot w\z)
        \\=n\sigma(v\mt,w\mt)\alpha(v\mo,w\mo)\ot v\z\ot w\z
    \end{multline*}
    and $n(v\mo\la w\mo\la 1)\ot v\z\ot w\z=(n\ot v)\varepsilon(w\mo)\ot w\z=n\ot v\ot w\in (N\ot_{B}V)\ot_{B}W$. Thus $\zeta_{VW}\xi_{VW}=\id$ if
    \begin{equation*}
          \sigma(X\o,Y\o)\alpha(X\t,Y\t)=X\la Y\la 1.
    \end{equation*}
    Also
    \begin{multline*}
        \xi_{VW}\zeta_{VW}(n\ot v\ot w)=\xi_{VW}(n\alpha(v\mo,w\mo)\ot v\z\ot w\z)
        \\=n\alpha(v\mt,w\mt)\sigma(v\mo,w\mo)\ot v\z\ot w\z
    \end{multline*}
    and $n(v\mo w\mo\la 1)\ot v\z\ot w\z=n\varepsilon(v\mo w\mo)\ot v\z\ot w\z=n\ot v\ot w\in N\ot_{B}(V\ot_{B}W)$. Thus $\xi_{VW}\zeta_{VW}=\id$ if 
    \begin{equation*}      \alpha(X\o,Y\o)\sigma(X\t,Y\t)=XY\la 1.
    \end{equation*}
    Thus (3)(b) implies (3)(a).

    Consider the map
    \begin{align*}
        F&=\bigl((N\ot_B\CH)\ot_B\CH\cong (N\ot_B\CH)\ot_N(N\ot_B\CH)\xrightarrow{\rcan}N\ot_B\CH\diamond_B\CH\bigr),
    \end{align*}
    involving the canonical map $\rcan$ defining the right anti-Galois property for $N\#_\sigma\CH.$ 
    We have 
    \begin{align*}
    F(n\ot X\ot Y)&=\rcan(n\ot X\ot 1\ot Y)\\
    &=(n\ot X)\z(1\ot Y)\ot (n\ot X)\o\\
    &=(n\ot X\o)(1\ot Y)\ot X\t\\
    &=n\sigma(X\o,Y\o)\ot X\t Y\t\ot X\th\\
    &=n\sigma(X\o,Y\o)\ot \mu(X\t\ot Y\t).
    \end{align*}
    In particular, $N\#_{\sigma}\CH$ is anti-right Galois if and only if the map 
    \[\xi_{\CH\CH}\colon (N\ou B\CH)\ou B\CH\ni n\ot X\ot Y\mapsto n\sigma(X\o,Y\o)\ot X\t\ot Y\t\in N\ou B\CH\ou B\CH\]
    is bijective. In particular (3)(a) implies (3)(c). 
    
    It remains to show that bijectivity of $\xi_{\CH\CH}$ implies (3)(b). By applying Remark \ref{free comodule bijection} twice, we first show that
 \begin{align}\label{equ. g inv}
     (\xi_{\CH\CH})\inv(n\ot X\ot Y)=n\alpha(X\o,Y\o)\ot X\t\ot Y\t
 \end{align}
 for a suitable map $\alpha\colon \CH\ou B\CH\to B$ satisfying the analogs of \eqref{2-cocycle 9} and \eqref{2-cocycle 10}.
 
     First,  as $(\xi_{\CH\CH})^{-1}\in \Hom^{\CH}(N\ot_B\CH\ot_B\CH^{\bullet}, (N\ot_B\CH)\ot_B\CH^{\bullet})$, there is a map $\Sigma\in \Hom_{-B}(N\ot_B\CH\ot_B\CH, N\ot_B\CH)$, such that \begin{align*}
        (\xi_{\CH\CH})^{-1}(n\ot X\ot Y)=\Sigma(n\ot X\ot Y\o)\ot Y\t,\,\quad \Sigma=(\id\ot \varepsilon)\circ (\xi_{\CH\CH})^{-1}.
    \end{align*}
Moreover, because $\xi_{\CH\CH}\in \Hom^{\CH}(N\ot_B\CH^{\bullet}\ot_B\CH, (N\ot_B\CH^{\bullet})\ot_B\CH)$ and thus $(\xi_{\CH\CH})\inv$ is also $\CH$-colinear with those structures,  we have $\Sigma=(\id\ot \varepsilon)\circ (\xi_{\CH\CH})^{-1}\in \Hom^{\CH}(N\ot_B\CH^{\bullet}\ot_B\CH, N\ot_B\CH^{\bullet})$ since $\id\ot \varepsilon$ preserves the comodule structure in the middle term. So there is $\tilde{\sigma}\in \Hom_{-B}(N\ot_{B}\CH\ot_B\CH, N)$ (notice that the right $B$-linearity of $\tilde{\sigma}$ is given by the left $\BB$-linearity on the second term), such that
\begin{align*}
    \Sigma(n\ot X\ot Y)=\tilde{\sigma}(n\ot X\o\ot Y)\ot X\t,\quad \tilde{\sigma}=(\id\ot \varepsilon)\circ \Sigma
\end{align*}
Since $(\xi_{\CH\CH})^{\pm 1}$ are left $N$-linear, so are $\Sigma$ and $\tilde{\sigma}$, and thus there is a suitable $\alpha\in \Hom_{B-}(\CH\ot_B\CH, N)$, such that
\begin{align*}
    \tilde{\sigma}(n\ot X\ot Y)=n\alpha(X\ot Y),
\end{align*}
which results in (\ref{equ. g inv}).

    Summarizing the construction,  $\alpha$ is the following composition:
    \begin{align*}
        \alpha=(\CH\ot_{B}\CH\xrightarrow{i} N\ot_B\CH\ot_B\CH\xrightarrow{(\xi_{\CH\CH})^{-1}}(N\ot_B\CH)\ot_B\CH\xrightarrow{\id\ot \varepsilon}N\ot_B\CH\xrightarrow{\id\ot \varepsilon}N).
    \end{align*} 
    
We can see $\alpha$ is left $\BB$-linear since $\tilde{\sigma}$ is. 
To show that $\alpha$ satisfies the analog of \eqref{2-cocycle 10}, we first define $L^{3}_{\overline{b}}$ being the left multiplication of $\overline{b}$ on the third term of $N\ot_{B}\CH\ot_{B}\CH$. Similarly, $R^{2}_{b}$ is the right multiplication of $b$ on the second term. 
\begin{align*}
    \alpha(X, \overline{b}\,Y)=&(\id_{N} \ot_{B} \varepsilon)\circ (\id_{N\ot_{B}\CH}\ot_{B} \varepsilon)\circ (\xi_{\CH\CH})\inv(1\ot X\ot \overline{b}\,Y)\\
    =&(\id_{N} \ot_{B} \varepsilon)\circ (\id_{N\ot_{B}\CH}\ot_{B} \varepsilon)\circ L^{3}_{\overline{b}} \circ (\xi_{\CH\CH})\inv(1\ot X\ot Y)\\
     =&(\id_{N} \ot_{B} \varepsilon)\circ R^{2}_{b} \circ(\id_{N\ot_{B}\cL}\ot_{B} \varepsilon)\circ (\xi_{\CH\CH})\inv(1\ot X\ot Y)\\
     =&(\id_{N} \ot_{B} \varepsilon)\circ R^{2}_{b} \circ\Sigma(1\ot X\ot Y)\\
     =&(\id_{N} \ot_{B} \varepsilon)\circ R^{2}_{b} (\alpha(X\o\ot Y)\ot X\t)\\
     =&(\id_{N} \ot_{B} \varepsilon)(\alpha(X\o\ot Y)\, (X\t\la b)\ot X\th)\\
     =&\alpha(X\o\ot Y)\, (X\t\la b),
\end{align*}
where the 2nd step use the fact $\xi_{\CH\CH}$ is $L^{3}_{\overline{b}}$-linear. 

Thus we can define $\zeta$ as in the first part of the proof, and by construction of $\alpha$ we have
\begin{multline*}
    (\xi_{\CH\CH})\inv(n\ot X\ot Y)=\Sigma(n\ot X\ot Y\o)\ot Y\t
    =\tilde\sigma(n\ot X\o\ot Y\o)\ot X\t\ot Y\t
    \\=n\alpha(X\o,Y\o)\ot X\t\ot Y\t
    =\zeta_{\CH\CH}(n\ot X\ot Y).
\end{multline*}
We have already seen that $\zeta_{\CH\CH}\xi_{\CH\CH}=\id$ implies
\[\sigma(X\o,Y\o)\alpha(X\t,Y\t)\ot X\th\ot Y\th=X\o\la (Y\o\la 1)\ot X\t\ot Y\t,\]
and applying $\varepsilon$ to the last two factors yields the analog of \eqref{2-cocycle 11}. Also we have seen that $\xi_{\CH\CH}\zeta_{\CH\CH}=\id$ implies 
\[\alpha(X\o,Y\o)\sigma(X\t,Y\t)\ot X\th\ot Y\th=(X\o Y\o)\la 1\ot X\t\ot Y\t,\]
and applying $\varepsilon$ yields the analog of \eqref{2-cocycle 12}. Thus $\alpha=\sigma\inv$ is an inverse of $\sigma$ in the sense of the definition.

As for (4), note that for $\alpha$ satisfying the analogs of \eqref{2-cocycle 9} and \eqref{2-cocycle 10} we have shown that one of the invertibility conditions \eqref{2-cocycle 11} or \eqref{2-cocycle 12} alone show that $\zeta_{\CH\CH}$ defined by $\alpha$ is a one-sided inverse of $\xi_{\CH\CH}.$ If $\sigma$ is invertible then $\xi_{\CH\CH}$ is bijective and thus a one-sided inverse is two-sided, and the same calculation as above shows that the other invertibility equation is also satisfied.
\end{proof}

\begin{lem}\label{lem. even half anti cleft extension is Galois}
   Let $\CH$
be a Hopf algebroid over $B$ and $P$ a right $\CH$-comodule algebra with $N=P^{\co\CH}$ and such that $B$ is a subalgebra of $N$. If there is a right colinear and right $B$-linear $\rho\colon\CH\to P$ that admits a skewed right inverse $\tilde\rho$ which is also colinear, then $N\subseteq P$ is an anti-right $\CH$-Galois extension. More precisely, the anti-right translation map is given by
    \begin{align}
        X\yi{}\ot_{N}X\er{}=\rho(X_{[+]})\ot_{N}\trho(X_{[-]}),
    \end{align}
    for any $X\in \CH$.
\end{lem}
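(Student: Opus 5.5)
The plan is to write down an explicit candidate for $\rcan^{-1}$ and check that it is a two-sided inverse. Two facts are used throughout. First, $\rho$ and $\trho$ are right colinear, hence $\BB$-bimodule maps. Second, the hypothesis that $\trho$ is a right skewed inverse of $\rho$ means
\[\rho(X_{[+]})\trho(X_{[-]})=\ol{\varepsilon(X)}.\]
The candidate is
\[\kappa\colon P\di\CH\to P\ou NP,\qquad \kappa(p\ot X)=\rho(X_{[+]})\ou N\trho(X_{[-]})\,p .\]
Restricting to $p=1$ gives exactly the claimed formula for the anti-right translation map.

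\emph{Well-definedness.} The expression $X_{[+]}\ou BX_{[-]}$ is balanced over $B$. Since $\rho$ is right $B$-linear and $B\subseteq N$, the factor $b$ can be moved across $\ou N$:
\[\rho(X_{[+]}b)\ot\trho(X_{[-]})=\rho(X_{[+]})\ot b\,\trho(X_{[-]}).\]
For the balancing $\ol b p\ot X=p\ot bX$ defining $P\di\CH$, I will use \eqref{equ. inverse mu 9}, namely $(bX)_{[+]}\ot(bX)_{[-]}=X_{[+]}\ot X_{[-]}\ol b$. Then right $\BB$-linearity of $\trho$ turns $\trho(X_{[-]}\ol b)p$ into $\trho(X_{[-]})\ol bp$, as required.

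\emph{The composite $\rcan\circ\kappa$.} This uses colinearity of $\rho$ and then \eqref{equ. inverse mu 5}:
\begin{align*}
\rcan\kappa(p\ot X)&=\rho(X_{[+]}\o)\trho(X_{[-]})p\di X_{[+]}\t\\
&=\rho(X\o{}_{[+]})\trho(X\o{}_{[-]})p\di X\t\\
&=\ol{\varepsilon(X\o)}p\di X\t=p\di X.
\end{align*}
This is essentially the computation $\ti\circ\ti^{-1}$ from the proof of \Cref{def. anti cleft extension}.

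\emph{The composite $\kappa\circ\rcan$.} This is the step I expect to be the real obstacle, because only a one-sided skewed inverse is available. Since $\rcan$ and $\kappa$ are both right $P$-linear in the second tensor factor, it suffices to prove
\[\rho(p\o{}_{[+]})\ou N\trho(p\o{}_{[-]})p\z=p\ou N1\qquad\text{for all } p\in P.\]
I would try to split $p$ as (an element of the form $\rho(\cdot)$) $\times$ (a coinvariant), so that the coinvariant factor can be moved across $\ou N$. Concretely, I would set
\[n_p:=\trho(p\o{}_{[-]})\,p\z\otimes\ldots\]
and use \eqref{equ. inverse mu 6} together with colinearity of $\trho$ to show that the relevant combination $\trho(p\o{}_{[-]})p\z$, paired with $p\o{}_{[+]}$, lies in $\CH\ou B N$. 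In other words, I would show that $p\o{}_{[+]}\ou B\trho(p\o{}_{[-]})p\z$ is coinvariant in its right leg. This should follow from the mixed identity \eqref{equ. inverse mu 6} applied to $p\o$, combined with coassociativity of the coaction. Granting this, the element can be rewritten as $\rho(Y)\ou N n$ with $n\in N$, moved to $\rho(Y)n\ou N1$, and then $\rho(p\o{}_{[+]})\trho(p\o{}_{[-]})p\z=\ol{\varepsilon(p\o)}p\z=p$ finishes the proof.

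The coinvariance claim is the delicate point. If it requires $N$-valued factors inside a tensor product over $B$ to be detected as an equalizer, and this fails without flatness, I would instead try a fallback. There I would use the hypothesis that $\trho$ is colinear, not just $\BB$-linear, to build a left inverse of $\rcan$ directly. The model is the skew-inverse cancellation argument of \Cref{lem. cancelable}, now applied to $P\ou NP$-valued maps.
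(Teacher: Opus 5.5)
Your strategy is the paper's. You use the same candidate inverse and verify $\rcan\circ\kappa=\id$ the same way, via colinearity of $\rho$ and \eqref{equ. inverse mu 5}. You also make the same reduction of $\kappa\circ\rcan=\id$, by right $P$-linearity, to the identity $\rho(p\o{}_{[+]})\ou N\trho(p\o{}_{[-]})p\z=p\ou N1$. Two small points:
\begin{itemize}
\item Well-definedness over the $\ou B$ in $X_{[+]}\ou BX_{[-]}$ also uses that $\trho$ is left $B$-linear. This is built into the definition of $\askpr$.
\item Your coinvariance computation needs \eqref{equ. inverse mu 2}, and not only \eqref{equ. inverse mu 6} and coassociativity, to collapse the factor $p\t{}_{[-]}p\o$ to $1$.
\end{itemize}

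The genuine gap is the step you leave as ``granting this''. From the fact that $p\o{}_{[+]}\ou B\trho(p\o{}_{[-]})p\z$ is coinvariant in its right leg, you cannot conclude without further input that it comes from $\CH\ou BN$. The reason is that $\CH\ou B-$ need not preserve the equalizer $N\to P\rightrightarrows P\di\CH$. Your fallback via \Cref{lem. cancelable} is not yet an argument, so as written the proof of $\kappa\circ\rcan=\id$ is incomplete.

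The missing idea is the Doi-type splitting of \Cref{Doi-integrals}(1), which needs no flatness. Since $\rho$ is colinear (and unital, as the paper assumes after normalizing cleavings) and $P$ is reversible, the left $N$-linear maps $\pi(p)=p\rz\rho(p\rmo)$ and $p\ot X\mapsto p\rz\rho(p\rmo X)$ split the equalizer $N\to P\rightrightarrows P\di\CH$. A split equalizer is preserved by every additive functor.

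The paper applies $P\ou N-$ to this split equalizer. By the same calculation as yours, with $\rho$ applied to the first leg, it checks that $\rho(p\o{}_{[+]})\ou N\trho(p\o{}_{[-]})p\z$ is coinvariant in $P\ou NP$. Hence it has the form $m\ou N1$, because $(P\ou NP)^{\co\CH}\cong P$ via $m\mapsto m\ot 1$. Applying multiplication gives $m=\rho(p\o{}_{[+]})\trho(p\o{}_{[-]})p\z=\ol{\varepsilon(p\o)}p\z=p$.

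Your variant in $\CH\ou BP$ also closes, if you apply $\CH\ou B-$ to the same split equalizer; the splitting maps are left $B$-linear since $B\subseteq N$. The element then lies in the image of the split monomorphism $\CH\ou BN\to\CH\ou BP$. Applying $\rho\ou B\id$ and moving the $N$-factor across $\ou N$ then finishes exactly as you describe.

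Note that this argument uses the reverse coaction $p\mapsto p\rz\ot p\rmo$, i.e.\ reversibility of $P$. The lemma's statement does not list this hypothesis, but the paper's proof relies on it too, and it holds in the anti-cleft setting where the lemma is applied.
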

\begin{proof}
    On the one hand we have
    \begin{align*}
        X\yi{}\z\,X\er{}\ot  X\yi{}\o=&\rho(X_{[+]}\o)\trho(X_{[-]})\ot X_{[+]}\t
        =\rho(X\o{}_{[+]})\trho(X\o{}_{[-]})\ot X\t\\
        =&\overline{\varepsilon(X\o)}\ot X\t=1\ot X.
    \end{align*}
    
  On the other hand, we have to show
    \[p\o\yi{}\ot p\o\er{} p\z =p\ot 1.\]
    Now we know from \Cref{Doi-integrals} that $(P\ou NP^{\bullet})^{\co\CH}\cong P$  if we endow the source with the comodule structure on the right hand tensor factor. In this case the isomorphism can be given by multiplication. So we only have to show that 
    \[p\o\yi{}\ot p\o\er{} p\z \in (P\ot_{N}P)^{\co\CH}.\]
     Indeed,
    \begin{align*}
        \delta(\rho(p\o{}_{[+]})\ot\trho(p\o{}_{[-]})p\z)
        &=\rho(p\t{}_{[+]})\ot\trho(p\t{}_{[-]}\o)p\z\ot p\t{}_{[-]}\t p\o\\
        &=\rho(p\t{}_{[+][+]})\ot\trho(p\t{}_{[+][-]})p\z\ot p\t{}_{[-]}p\o\\
        &=\rho(p\o{}_{[+]})\ot\trho(p\o{}_{[-]})p\z\ot 1.
    \end{align*}  
\end{proof}

\begin{thm}\label{classical char of cleft extensions}
Let $\CH$ be a $B$-Hopf algebroid, and $P$ a right $\CH$-comodule algebra. The following are equivalent
\begin{enumerate}
    \item \label{equiv.cleft} $P$ is anti-cleft.
    \item \label{equiv.galnb} $P$ is an anti-right $\CH$-Galois extension of $N=P^{\co\CH}$ that admits a normal basis.
    \item \label{equiv.crpinv} $P\cong N\#_\sigma\CH$ is a crossed product with an invertible cocycle.
\end{enumerate}
\end{thm}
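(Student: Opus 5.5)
I would prove the cycle of implications $(1)\Rightarrow(2)\Rightarrow(3)\Rightarrow(1)$, using as much of the preceding machinery as possible.

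\emph{$(1)\Rightarrow(2)$.} Let $(\rho,\trho)$ be a unital cleaving. By \Cref{lem. skew inverse of also colinear} both maps are colinear, and by \Cref{skew inverse and linearity} $\rho$ is right $B$-linear. \Cref{lem. even half anti cleft extension is Galois} then shows that $P$ is an anti-right $\CH$-Galois extension of $N$, with translation map $\rho(X_{[+]})\ot_N\trho(X_{[-]})$.

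For the normal basis, reversibility gives the Doi projection $\pi(p)=p\rz\rho(p\rmo)$ of \Cref{Doi-integrals}. I would then define
\[
\Theta\colon N\ou B\CH\to P,\qquad n\ot X\mapsto n\rho(X),
\]
which is left $N$-linear and right colinear. As candidate inverse I take
\[
p\mapsto \pi\bigl(p\z\trho(p\o{}_{+})\bigr)\ot p\o{}_{-},
\]
or a variant built from $\trho$ and the reverse coaction. The work here is checking that the first factor lands in $N$, which should follow from colinearity of $\trho$ together with \eqref{equ. inverse lamda 5}, and then checking both compositions. These checks reduce to $\trho(X_+)\rho(X_-)=\varepsilon(X)$, its anti-skewed counterpart, and \eqref{mixed coass 2}.

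\emph{$(2)\Rightarrow(3)$.} By \Cref{Lemma. crossed product}(2), a comodule algebra with normal basis is a crossed product $N\#_\sigma\CH$. Since it is anti-right Galois, \Cref{monoidal functor}(3), direction (c)$\Rightarrow$(b), shows that $\sigma$ is invertible.

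\emph{$(3)\Rightarrow(1)$.} Set $P=N\#_\sigma\CH$ with $\sigma$ invertible, and define
\[
\rho(X)=1\ot X,\qquad \trho(X)=\sigma^{-1}(X_{[-]}\o,X_{[+]}\o)\ot X_{[-]}\t,
\]
or whichever arrangement makes $\trho$ well defined, left $B$-linear and colinear. This is the analogue of the classical formula $\sigma^{-1}(S(h\t),h\th)\#S(h\o)$. Colinearity of $\rho$ and unitality are immediate. Reversibility of the coaction on $N\ou B\CH$ I would obtain explicitly via $n\ot X\mapsto (n\ot X_+)\ot X_-$.

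The cleaner route is to avoid guessing $\trho$ altogether. \Cref{monoidal functor} gives that $P$ is anti-Galois, $\rho$ is right colinear and right $B$-linear, and the normal basis makes $P$ left faithfully flat over $B$ whenever $\CH$ is. Under these conditions I would apply \Cref{def. anti cleft extension} $(1)\Rightarrow(3)$: bijectivity of
\[
i(X\ot p)=\rho(X\o)p\ot X\t
\]
follows from bijectivity of $\xi_{\CH\CH}$, because under $P\cong N\ou B\CH$ the map $i$ is essentially $\xi_{\CH\CH}$ composed with $\mu$. However, the theorem carries no flatness hypothesis. So the fallback is to write down $\trho$ explicitly, using $\sigma^{-1}$ and the anti-translation data, and verify the two skewed-inverse identities directly via \eqref{2-cocycle 11}, \eqref{2-cocycle 12} and \eqref{mixed coass 1}--\eqref{mixed coass 3}.

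I expect this last verification, producing $\trho$ without flatness and checking it is a genuine two-sided anti-skewed inverse, to be the main obstacle. The normal-basis inverse in $(1)\Rightarrow(2)$ is the second delicate point, since it requires well-definedness over balanced tensor products.
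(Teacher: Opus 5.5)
Your skeleton matches the paper's: the Galois property comes from \Cref{lem. even half anti cleft extension is Galois}, and $(2)\Rightarrow(3)$ from \Cref{Lemma. crossed product} together with \Cref{monoidal functor}. However, in both steps that need real work you have interchanged the roles of $\rho$ and $\trho$, and the decisive step is left open.

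\textbf{The normal basis in $(1)\Rightarrow(2)$.} The map $n\ot X\mapsto n\rho(X)$ is not well defined on $N\ou B\CH$ in general. Well-definedness requires $\rho(bX)=b\rho(X)$, but $\rho$ is only right $B$-linear. Left $B$-linearity of $\rho$ is equivalent to the special condition $X\la b=\varepsilon(Xb)$ of \Cref{rem. bilinear 2-cocycle of L over N}. Your candidate inverse $p\mapsto\pi(p\z\trho(p\o{}_+))\ot p\o{}_-$ is also not colinear, because its $\CH$-leg $p\o{}_-$ behaves like an antipode image ($S(p_{(2)})$ in the Hopf case). The normal basis must be built from the left $B$-linear half:
\begin{itemize}
\item the map is $n\ot X\mapsto n\trho(X)$, with inverse $p\mapsto\pi(p\z)\ot p\o$, where $\pi(p)=p\rz\rho(p\rmo)$;
\item one composite uses $\trho(X)\rz\ot\trho(X)\rmo=\trho(X_+)\ot X_-$ (from colinearity of $\trho$) and $\trho(X_+)\rho(X_-)=\varepsilon(X)$;
\item the other composite uses \eqref{equ. regular map 8} and $\rho(X_{[+]})\trho(X_{[-]})=\ol{\varepsilon(X)}$.
\end{itemize}

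\textbf{The cleaving in $(3)\Rightarrow(1)$.} The same swap occurs here. The map $1\#X$ is left $B$-linear, but $(1\#X)b=(X\o\la b)\#X\t\neq 1\#Xb$ in general, so it must serve as $\trho$, not $\rho$. Consequently your map $i$ from \Cref{def. anti cleft extension} is not well defined, quite apart from the flatness hypothesis you correctly noted is unavailable.

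More importantly, the step you flag as the main obstacle — producing the anti-skewed inverse without flatness — is exactly where an idea is missing. You should not guess the inverse; read it off the translation map:
\begin{itemize}
\item Since $P$ is anti-Galois, $\hat\tau\colon\CH\to P\ou NP\cong(N\ou B\CH)\ou B\CH$ is colinear for the structure $X\mapsto X_{[+]}\ot X_{[-]}$ on the source, by \eqref{equ. anti translation map 2}.
\item \Cref{free comodule bijection}, which needs no flatness, then gives $\rho:=(\id\ot\varepsilon)\hat\tau$ with $\hat\tau(X)=\rho(X_{[+]})\ot\trho(X_{[-]})$. Hence $\rho\askpr\trho=\eta\ol\varepsilon$ by \eqref{equ. anti translation map 6.5}.
\item One checks $\rho=\rho_\pi$ for $\pi(n\ot X)=n\varepsilon(X)$, so $\rho$ is colinear by \Cref{Doi-integrals}.
\item The remaining identity $\trho\skpr\rho=\eta\varepsilon$ follows from $(n\ot X_+)\rho(X_-)=n\varepsilon(X)$. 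This is computed using $\rho=\rho_\pi$, \eqref{mixed coass 2}, and $\pi_\rho(p\z)\trho(p\o)=p$.
\end{itemize}
A posteriori $\rho(X)=\sigma\inv(X\o{}_{[+]},X\o{}_{[-]})\ot X\t$, which is not the formula you proposed.

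As written, your $(1)\Rightarrow(2)$ relies on a map that does not exist in general, and your $(3)\Rightarrow(1)$ is incomplete.
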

\begin{proof}
   We first show that (\ref{equiv.cleft}) implies (\ref{equiv.galnb}).  Let $(\rho, \trho)$ be the cleaving maps. Define $\pi\colon P\to N$ as in \Cref{Doi-integrals} using $\rho$. Since now $\rho$ is also right $B$-linear, we have $\pi(\ol bp)=\pi(p)b$. We define
   
    \[\psi:P\to N\ot_{B}\CH,\qquad p\mapsto \pi(p\z)\ot p\o= p\z\rz\,\rho(p\z\rmo)\ot p\o. \]
    Clearly, $\psi$ is right $\CH$-colinear and left $N$-linear. The inverse is given by $\psi^{-1}(n\ot X)=n\trho(X)$. Indeed, on the one hand,
    \begin{align*}
        \psi\circ \psi^{-1}(n\ot X)=&n\trho(X)\z\rz\,\rho(\trho(X)\z\rmo)\ot \trho(X)\o\\
        =&n\trho(X\o)\rz\,\rho(\trho(X\o)\rmo)\ot X\t\\
        =&n\trho(X\o{}_{+})\,\rho(X\o{}_{-})\ot X\t\\
        =&n\ot X,
    \end{align*}
    where the 3rd step uses that $\trho$ is right $\CH$-colinear. On the other hand,
    \begin{align*}
        \psi^{-1}\circ\psi(p)=&p\rz\,\rho(p\rmo{}_{[+]})\trho (p\rmo{}_{[-]})=p\rz\overline{\varepsilon(p\rmo)}=p.
    \end{align*}
    Moreover, by Lemma \ref{lem. even half anti cleft extension is Galois}, we know $N\subseteq P$ is an anti-right $\CH$-Galois extension.

   For the equivalence of (\ref{equiv.galnb}) and (\ref{equiv.crpinv}), note that by Lemma \ref{Lemma. crossed product} we already know every comodule algebra with the normal basis property is a crossed product. Thus we can apply \Cref{monoidal functor}.

    To finish the proof we will assume (3) and (2) and deduce (1). First of all the comodule structure is  reversible because if $P=N\ou B\CH$, then the map $\psi$ in \Cref{def. regular of comodule} identifies with $N\ou B\lambda$ where $\lambda$ is from \Cref{defHopf}. 
    
    We define $\tilde\rho\colon \CH\to P$ by $\tilde\rho(X)=1\#X$, which is clear left $B$-linear and right $\CH$-colinear. Consider
    \[H:\CH\xrightarrow{\hat\tau}P\ou NP\cong (N\ou B\CH)\ou B\CH.\]
    By \eqref{equ. anti translation map 2}, $H$ is colinear if we equip the source with the comodule structure given by $\delta(X)=X_{[+]}\ot X_{[-]}$ and the target with the comodule structure coming from the rightmost tensor factor. By Remark \ref{free comodule bijection}, if we define $\rho:=(\id\ot \varepsilon)\circ H:\CH\to N\ot_{B}\CH=P$ then $H(X)=\rho(X_{[+]})\ot X_{[-]}$  and thus the anti-translation map for $P$ is $\hat\tau(X)=\rho(X_{[+]})\ot\tilde\rho(X_{[-]})$. This implies in particular that $\rho\askpr\tilde \rho(X)=X\yi{} \,X\er{}=\eta\ol\varepsilon(X)$.

There is an obvious left $N$-linear splitting $\pi$ of the inclusion $N\subset P$, namely $\pi(n\ot X)=n\varepsilon(X)$. By definition 
\begin{multline*} 
\rho(X)=(\id\ot \varepsilon)\circ H(X)=(\id\ot \varepsilon)(\rho(X_{[+]})\ot X_{[-]})\\=\rho(X_{[+]})\pi(\tilde\rho(X_{[-]}))=X\yi{}\pi(X\er{})=\rho_{\pi}(X).\end{multline*} Therefore, by  \Cref{Doi-integrals} $\rho$ is right $\CH$-colinear, and we can define $\pi_{\rho}:P\to N$ by $\pi_{\rho}(p)= p\rz\rho(p\rmo)$. We find
    \begin{align*} 
      \pi_\rho(p\z)\trho(p\o)
      &=p\z\rz\rho(p\z\rmo)\trho(p\o)\\
      &=p\rz\rho(p\rmo{}_{[+]})\trho(p\rmo{}_{[-]})\\
      &=p\rz(\rho\askpr\trho)(p\rmo)=p.
    \end{align*} 
    As a result, we have
    \begin{align*}
        (n\ot X_+)\rho(X_-)
        &=(n\ot X_+)\rho(X_{-[+]})\pi(\trho(X_{-[-]}))\\
        &=(n\ot X\o{}_{+})\rho(X\o{}_{-})\pi(\trho(X\t))\\
        &=\pi_\rho(n\ot X\o)\pi(\trho(X\t))\\
        &=\pi(\pi_\rho(n\ot X\o)\,\trho(X\t))\\
        &=\pi(n\ot X)=n\varepsilon(X),
    \end{align*}
    which implies in particular $\trho\skpr\rho=\eta\varepsilon$ and thus $(\rho,\trho)$ is a cleaving.
\end{proof}

\begin{rem}\label{rem. concrete correspondence}
    The proof of \Cref{classical char of cleft extensions} establishes bijections between the data defining a cleft extension, and that defining a crossed product with invertible cocyle; it also describes the translation map of the arising Galois extension and the normal basis. We summarize an explicit description:
    
  Given cleaving map $(\rho,\trho)$, the corresponding normal basis is 
    \begin{align*}
        \psi:P\to& N\ot_{B}\CH\\
        p\mapsto& \pi(p\z)\ot p\o= p\z\rz\,\rho(p\z\rmo)\ot p\o\\
        n\trho(X)\mapsfrom& n\ot X.
    \end{align*}
The translation map is:
\[X\yi{}\ot_{N}X\er{}=\rho(X_{[+]})\ot_{N}\trho(X_{[-]}).\]

Given the normal basis and translation map we can recover the cleaving by
 \[\trho(X)=\psi^{-1}(1\ot X),\qquad \rho(X)=X\yi{}\,X\er{}{}^{l}\,\varepsilon(X\er{}{}^{r}),\qquad \forall X\in \CH\]
where $\psi(X)=:p^l\ot p^r\in N\ou B\CH$.

The cocycle and its inverse can be constructed from a cleaving 
    \begin{align*}
        X\la n:=&\trho(X_{+})n\rho(X_{-}),\\
        \sigma(X, Y):=&\trho(X_{+})\trho(Y_{+})\rho(Y_{-}X_{-}),\\
        \sigma^{-1}(X,Y):=&\trho(X_{+}Y_{+})\rho(Y_{-})\rho(X_{-})&.
    \end{align*}

   Finally if the cocycle in a crossed product is invertible then
    \begin{align*}
        \rho(X)&=\sigma^{-1}(X\o{}_{[+]},X\o{}_{[-]})\ot X\t,\\
        \trho(X)&=1\ot X.
    \end{align*}
    is a cleaving.   
\end{rem}

By using the formula in Remark \ref{rem. concrete correspondence},  there is indeed a one to one correspondence between a stronger anti-cleft extension (namely, require $\trho$ being right $B$-linear or $\rho$ being left $B$-linear) and a stronger normal basis property (namely, require $\psi:P\to N\ot_{B}\CH$ being right $B$-linear).
\begin{lem}\label{rem. bilinear 2-cocycle of L over N}
  In the situation of \Cref{classical char of cleft extensions}, the following are equivalent: 
  \begin{enumerate}
      \item The normal basis $\psi\colon P\to N\ou B\CH$ is right $B$-linear.
      \item $\trho$ is right $B$-linear.
      \item $\rho$ is left $B$-linear.
      \item $X\la b=\varepsilon(Xb)$ for $b\in B$.
      \item $\sigma$ factors over $\CH\ou{B^e}\CH$ and $\sigma^{\pm}(X, Y\ol b)=\sigma^{\pm}(X, Y b)$ for any $X, Y\in\CH$.
  \end{enumerate}
  \end{lem}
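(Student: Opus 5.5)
We prove the equivalences in the order (1)$\Leftrightarrow$(2)$\Leftrightarrow$(3), then (2)$\Leftrightarrow$(4), and finally (4)$\Leftrightarrow$(5). Throughout we use the explicit dictionary of \Cref{rem. concrete correspondence}: the normal basis has inverse $\psi^{-1}(n\ot X)=n\trho(X)$, and $X\la n=\trho(X_+)n\rho(X_-)$.

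\textbf{(1)$\Leftrightarrow$(2).} This is immediate from $\psi^{-1}(n\ot X)=n\trho(X)$. The right $B$-module structure on $N\ou B\CH$ is right multiplication in $\CH$. Since $\psi$ is bijective, it is right $B$-linear iff $\psi^{-1}$ is. That in turn holds iff $n\trho(Xb)=n\trho(X)b$ for all $n$, and taking $n=1$ this says exactly that $\trho$ is right $B$-linear.

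\textbf{(2)$\Leftrightarrow$(3).} This is \Cref{skew inverse and linearity}(1) applied to the skewed inverse pair, with $f=\trho\in\Hom_{B-\ol B}$ and $g=\rho$. The only thing to check is that the bimodule hypotheses of that lemma hold. Here $\rho$ is $\ol B$-bilinear and right $B$-linear, while $\trho$ is $\ol B$-bilinear and left $B$-linear, so they do.

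\textbf{(2)$\Rightarrow$(4) and (3)$\Rightarrow$(4).} Assume (2), and hence also (3). Using \eqref{equ. inverse lamda 9} together with the $B$-linearity of $\rho$ and $\trho$, we compute
\[
X\la b=\trho(X_+)\,b\,\rho(X_-)=\trho(X_+b)\rho(X_-)=\trho((Xb)_+)\rho((Xb)_-)=\varepsilon(Xb).
\]

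\textbf{(4)$\Rightarrow$(1).} For the converse we use the crossed product form $\trho(X)=1\#X$. In $N\#_\sigma\CH$ we have
\[
(1\# X)(b\#1)=(X\o\la b)\,\sigma(X\t,1)\# X\th=(X\o\la b)\#X\t .
\]
Under (4) this becomes $\varepsilon(X\o b)\#X\t=1\#Xb$. Hence $\trho(X)b=\trho(Xb)$, which is (2), and so (1) follows.

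\textbf{(4)$\Leftrightarrow$(5).} This step is the most delicate, because of the well-definedness issues over the various tensor products. Assuming (4), axiom \eqref{2-cocycle 6} gives
\[
\sigma(X,bY)=(X\o\la b)\sigma(X\t,Y)=\varepsilon(X\o b)\sigma(X\t,Y)=\sigma(Xb,Y),
\]
so $\sigma$ factors over $\ot_{B^e}$. For the condition $\sigma(X,Y\ol b)=\sigma(X,Yb)$, we use the cleaving formula $\sigma(X,Y)=\trho(X_+)\trho(Y_+)\rho(Y_-X_-)$ together with \eqref{equ. inverse lamda 9} and the two-sided $B$-linearity established above: the factor $b$ can be moved through $\trho(Y_+)$ to the front of $\rho(Y_-X_-)$, and there it becomes $\ol b$. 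The same argument handles $\sigma^{-1}$, using its cleaving formula. Conversely, assuming (5), we obtain (4) from \eqref{2-cocycle 6} by setting $Y=1$ and using \eqref{2-cocycle 5}:
\[
X\la b=(X\o\la b)\,\varepsilon(X\t)=\sigma(X,b)=\sigma(Xb,1)=\varepsilon(Xb).
\]

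The main obstacle I expect is the well-definedness bookkeeping in the (4)$\Leftrightarrow$(5) part, specifically keeping track of which balanced tensor products the various expressions live in.
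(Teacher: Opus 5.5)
Your proof is correct. Three of your steps are essentially the paper's:
\begin{itemize}
\item (1)$\Leftrightarrow$(2) via $\psi\inv(n\ot X)=n\trho(X)$;
\item (2)$\Leftrightarrow$(3) via \Cref{skew inverse and linearity};
\item the derivation of (4) from $X\la b=\trho(X_+)b\rho(X_-)$ and \eqref{equ. inverse lamda 9}.
\end{itemize}
The difference is in how you close the circle.

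\textbf{The paper's route.} It goes (4)$\Rightarrow$(5) and then returns to the linearity conditions. For the first step it obtains $\sigma(X,Y\ol b)=\sigma(X,Yb)$ from the cocycle axiom \eqref{2-cocycle 8} with $n=b$, rewriting both sides by means of (4). For the return it proves $\rho(bX)=b\rho(X)$ from the formula $\rho(X)=\sigma\inv(X\o{}_{[+]},X\o{}_{[-]})\ot X\t$, which uses the $b$/$\ol b$ clause of (5) for $\sigma\inv$.

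\textbf{Your route.} You go from (4) back to (2) directly, through $\trho(X)b=(1\# X)(b\# 1)=1\# Xb$, and treat (5) as a separate branch.
\begin{itemize}
\item For the $b$/$\ol b$ clause you use the cleaving formulas for $\sigma^{\pm}$ together with (2) and (3). Stated precisely, your ``$b$ becomes $\ol b$'' step is $\sigma(X,Yb)=\trho(X_+)\trho(Y_+)\rho(bY_-X_-)=\sigma(X,Y\ol b)$, because $Y_+\ot bY_-=(Y\ol b)_+\ot(Y\ol b)_-$ by \eqref{equ. inverse lamda 9}. The same works for $\sigma\inv$.
\item For (5)$\Rightarrow$(4) you use only \eqref{2-cocycle 5}, \eqref{2-cocycle 6} and the factorization of $\sigma$ over $\CH\ou{B^e}\CH$.
\end{itemize}

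\textbf{What each buys.} Your argument shows that the first clause of (5) alone already forces (4), and hence all of (5). The paper's argument shows the same for the second clause applied to $\sigma\inv$. You also write out the $\sigma\inv$ case of the $b$/$\ol b$ symmetry, which the paper leaves implicit. In exchange, the paper derives the $b$/$\ol b$ symmetry of $\sigma$ from the cocycle axioms and (4) alone, without invoking the cleaving maps.
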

\begin{proof}
We can show (1) and (2) are equivalent by using the formula of `(1)$\Rightarrow$(2)' and `(2)$\Rightarrow$(1)' in Remark \ref{rem. concrete correspondence}.  We can show (2) and (3) are equivalent by Lemma~\ref{skew inverse and linearity}. We can show (3) implies (4) by using the formula of `(1)$\Rightarrow$(3)' in Remark \ref{rem. concrete correspondence}. 

Now, we show (4) implies (5).  By using \eqref{2-cocycle 6} and \eqref{2-cocycle 10} of Definition~\ref{defi. 2 cocycle and twisted module algebra} we can show $\sigma^{\pm}$ factor through $\ot_{B^e}$. Also, we have  $\sigma^{\pm}(X,Yb)=\sigma^{\pm}(X,Y\overline{b})$, for any $X\in \CH$ and $b\in B$. Indeed, we have on the one hand,
    \begin{align*}
        \sigma(X\o, Y\o) (X\t\,Y\t \la b)=&\sigma(X\o, Y\o)\varepsilon(X\t \varepsilon(Y\t \overline{b}))=\sigma(X\o, \overline{\varepsilon(Y\t \overline{b})}\,Y\o)\varepsilon(X\t)\\
        =&\sigma(X, Y\overline{b}).
    \end{align*}
    On the other hand,
    \begin{align*}
        (X\o\la( Y\o\la b))\sigma(X\t, Y\t)=&\varepsilon(X\o \overline{\varepsilon(Y\o b)})\sigma(X\t, Y\t)=\varepsilon(X\o )\sigma(X\t, \varepsilon(Y\o b)Y\t)\\
        =&\sigma(X, Yb).
    \end{align*}
We can show (5) implies (1) by using   `(3)$\Rightarrow$(1)' in Remark \ref{rem. concrete correspondence}. Indeed, we have
 \begin{align*}
     \rho(bX)=&\sigma^{-1}(X\o{}_{[+]}, X\o{}_{[-]}\overline{b})\ot X\t=\sigma^{-1}(X\o{}_{[+]}, X\o{}_{[-]}b)\ot X\t\\
     =&\sigma^{-1}(bX\o{}_{[+]}, X\o{}_{[-]})\ot X\t=b\rho(X).
 \end{align*}

\end{proof}

 A left cleft extension is by definition a left $\CL$-comodule algebra $Q$ such that $\ol B\subset M:={^{\co\CL} Q}$, and it is a right anti-cleft extension of $M$ when considered as a right $\CL^\cop$-comodule algebra. In particular it has a normal basis in the sense of an isomorphism $M\ou{\ol B}\CL$ of left $M$-modules and left $\CL$-comodules. It can be written as a crossed product which we will denote
$M\lcp_\alpha\CL^\cop$ where $(\la,\alpha)$ is a cocycle of $\CL^\cop$ over $M$ and multiplication is given by
\[(m\lcp X)(n\lcp Y)=m(X\th\la n)\alpha(X\t,Y\t)\lcp X\o Y\o.\]

\begin{rem}\label{left cross product in terms of right cross product}
If $M=N^\op$ for a $B$-ring $N$ and $(\sigma,\la)$ is an invertible cocycle of $\CL$ over $N$, then we can take $\alpha=\sigma\inv$, which is an invertible cocycle of $\CL^\cop$ over $M$ and obtain a left crosed product as above. In this case we can identify $M\lcp_\alpha\CL^\cop$ with $\CL\diamond_BN$, and write multiplication on the latter in the form 
    \[(X\ot n)(Y\ot m)=X\o\,Y\o\ot \sigma^{-1}(X\t, Y\t)(X\th\la m)\, n.\]  
\end{rem}

\begin{rem}\label{first left cleft remark}
    Let $H$ be a $k$-Hopf algebra with bijective antipode. Then there is an obvious notion of cleftness for a left $H$-comodule algebra, obtained from the notion of a cleft right comodule algebra by reversing all tensor products.
    
    This does not correspond to the above notion. First, in the ordinary Hopf case, a left comodule algebra would be considered cleft if there is a convolution invertible left comodule map $H\to P$. If we specialize our definition to the Hopf case, we get a different condition. Namely, we do get a left comodule map $H\to P$ (because we require a right comodule map $H^\cop\to P$, which is the same thing). However, we require this to be convolution invertible as a map $j\colon H^\cop\to P$, where the classical definition requires it to be convolution invertible on $H$. 
    
    Second, a cleft left comodule algebra $P$ with coinvariants $N$ would have a normal basis which is a left $H$-colinear and \emph{right} $N$-linear isomorphism $P\cong H\ot N$. Multiplication in this case can be written as a left crossed product with multiplication
    \[(g\ot m)(h\ot n)=g\o h\o\ot \kappa(g\t, h\t)(m\triangleleft h\th)n\]
    with a \emph{right} weak action of $H$ on $N$ and suitable cocycle $\kappa$. This formula does not correspond to our formula for the Hopf algebroid case. Also, it does not lend itself to a naive Hopf algebroid generalization due to the presence of a right action; left Hopf algebroids have no good notion of a right action compatible with tensor products, for example no good notion of right module algebra which would give rise to the special case of a smash product (without cocycle).

    In the ordinary Hopf case, it is easy to make the differences disappear. Given an ordinary Hopf cleaving $j\colon H\to P$ for the left comodule algebra $P$, we can compose its inverse with the inverse antipode to obtain a (left) cleaving  in our sense. 
    
    We will come back to a corresponding generalization of the ordinary crossed product formula suitable to the Hopf algebroid situation later.
\end{rem}

\section{The inverse and the Ehresmann Hopf algebroid}

\begin{LD}
   Let $P$ be a right anti-cleft extension of $N$ over $\CH$. Then $P\cong\CH\ot_{B}N$ as right $\CH$-comodule and right $N$-module. We say that $P$ has a weird normal basis.
\end{LD}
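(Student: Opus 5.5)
The plan is to mirror the proof of (1)$\Rightarrow$(2) in \Cref{classical char of cleft extensions}, now using $\rho$ instead of $\trho$ to build the basis and putting the coinvariant factor on the right. Let $(\rho,\trho)$ be a unital cleaving, and let $\CH\ot_B N$ be formed with the right $B$-module structure of $\CH$ and the left $B\subseteq N$ action. The right comodule structure is $X\ot n\mapsto X\o\ot n\ot X\t$, balanced as in $P\di\CH$ via the $\ol B$-action, and $N$ acts by right multiplication.

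\textbf{Step 1: a right $N$-linear splitting.} Define
\[\pi'\colon P\to N,\qquad \pi'(p)=\trho(p\rz{}_{\scriptscriptstyle{(0)}}\ldots)\]
in the mirror form of \Cref{Doi-integrals}. Concretely, I would take $\pi'(p)=\trho(p\o{}_{+})\,\rho(p\o{}_{-})$-type expressions, i.e.
\[\pi'(p):=\trho(p\o{}_{[+]})\ldots\]
Rather than guess, the cleanest candidate is
\[\pi'(p)=\trho\bigl((p\o)_{+}\bigr)\ \text{replaced by}\ \pi'(p):=\trho(p\rmo)\,p\rz ,\]
the left-right mirror of $\pi_\rho(p)=p\rz\rho(p\rmo)$. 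The check that $\pi'(p)\in N$ goes exactly as in the proof of \Cref{Doi-integrals}(1): apply $\delta$, and use colinearity of $\trho$ together with \eqref{equ. regular map 8} and \eqref{equ. inverse mu 6}. Here we need $\trho\askpr$-type identities, specifically $\rho(X_{[+]})\trho(X_{[-]})=\ol{\varepsilon(X)}$ after reversing. Right $N$-linearity is clear from \eqref{equ. regular map 3}, since $n\rz\ot n\rmo=n\ot1$ for coinvariants. If instead the mirror formula involves $\rho(p\rmo)$, I would switch roles and use $\rho$; the skewed inverse relations decide which one works.

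\textbf{Step 2: the isomorphism.} Define
\[\psi'\colon P\to\CH\ot_B N,\qquad p\mapsto p\o{}\ot\pi'(p\z),\]
with the coaction rewritten through $\lambda^{-1}$ or $\mu^{-1}$ so that the $\CH$ factor sits on the left. A candidate inverse is $X\ot n\mapsto\rho(X)n$, which is well defined because $\rho$ is right $B$-linear. It is right $N$-linear, and it is colinear because $\rho$ is.

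\textbf{Step 3: the two compositions.} This is the main obstacle. For $\psi'\circ\psi'^{-1}=\id$, expand $\pi'(\rho(X\t))$ using colinearity of $\rho$, and reduce to the skewed identity $\trho(X_+)\rho(X_-)=\varepsilon(X)$ or its anti-skewed partner. I expect this to need \eqref{mixed coass 1}–\eqref{mixed coass 3} to move the $\pm$ and $[\pm]$ indices into position. For $\psi'^{-1}\circ\psi'=\id$, collapse $\rho(p\o)\trho(\cdots)p\rz$ with \eqref{equ. regular map 7} or \eqref{equ. regular map 8}, followed by the counit identities \eqref{equ. inverse lamda 7} and \eqref{equ. inverse mu 7}.

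The delicate points are, first, verifying that every expression is well defined over the various balanced tensor products, and second, choosing the correct version of the coaction, in the $\lambda$ form versus the $\mu$ form. I would settle the second point first by testing against the Hopf-algebra case $\psi'(p)=p\o\ot\bar\rho(p\t)\ldots$ in \Cref{rem. classical explaination}.
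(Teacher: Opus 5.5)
\textbf{There is a genuine gap: you never construct a valid inverse of $\tilde\psi$.} Your map $\tilde\psi\colon\CH\ou BN\to P$, $X\ot n\mapsto\rho(X)n$, is exactly the one the paper uses. Your checks that it is well defined, right $N$-linear and colinear are fine. The problem is bijectivity.

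\textbf{The splitting $\pi'(p)=\trho(p\rmo)p\rz$ fails.}
\begin{itemize}
\item It is not well defined. The element $p\rz\ot p\rmo$ lives in $P\ou{\BB}\CH$, where $p\rz\ol b\ot X=p\rz\ot\ol bX$. But $\trho(X)p\rz\ol b\neq\ol b\,\trho(X)p\rz$ in general.
\item It is wrong even for an ordinary Hopf algebra. There $p\rz\ot p\rmo=p\z\ot S(p\o)$ and $\trho$ is the classical cleaving $j$, so $\pi'(p)=j(S(p\o))p\z$, which is not coinvariant. Take $P=H$ Sweedler's four-dimensional Hopf algebra in characteristic $\neq 2$ ($g^2=1$, $x^2=0$, $xg=-gx$, $\Delta(x)=x\ot 1+g\ot x$, $S(x)=-gx$), with $\trho=\rho=\id$. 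Then $\pi'(x)=S(1)x+S(x)g=2x\notin k=H^{\co H}$. The correct classical mirror needs $S^{-1}$.
\item Your fallback with $\rho$ in place of $\trho$ has the same two defects.
\end{itemize}

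\textbf{The shape $\psi'(p)=p\o\ot\pi'(p\z)$ is not even well typed in $\CH\ou BN$.} The coaction is balanced against the \emph{left} $B$-action on $\CH$, while $\CH\ou BN$ is balanced against the \emph{right} one. Nor can you apply a counit to the $B$-balanced factor $X_{[+]}$ of $\mu^{-1}(1\di X)$ to split off a separate map $P\to N$. So Step 3, which you call the main obstacle, is not merely unverified: the map it is meant to verify has not been defined.

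\textbf{The missing idea is that no right-handed Doi splitting is needed.}
\begin{itemize}
\item Compose $\tilde\psi$ with the normal basis $\psi\colon P\to N\ou B\CH$, $p\mapsto\pi(p\z)\ot p\o$, already obtained in \Cref{classical char of cleft extensions}.
\item Since $\pi(\rho(X)n)=\rho(X_+)n\rho(X_-)=X\vee n$, the composite $\psi\tilde\psi$ is $X\ot n\mapsto X\o\vee n\ot X\t$.
\item This is inverted by $n\ot X\mapsto X_{[+]}\ot X_{[-]}\wedge n$. That map lands in $\CH\ou BN$ automatically, because $\wedge$ takes values in $N$.
\item The two verifications use \eqref{mixed coass 1}, respectively \eqref{equ. inverse mu 5}, together with the two identities of \Cref{prop. cleaving maps}.
\end{itemize}

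Unwinding this gives $\tilde\psi^{-1}(p)=p\o{}_{[+]}\ot\trho(p\o{}_{[-]})p\z$, that is $\ti\circ\delta$ with $\ti$ from \Cref{def. anti cleft extension}. This is what your ``rewrite through $\mu^{-1}$'' should have produced. If you want to use this formula directly, you still have to show it lies in the image of $\CH\ou BN$ inside $\CH\ou BP$. You could do that by a coinvariance computation as in \Cref{lem. even half anti cleft extension is Galois}, using the splitting of $N\subseteq P$ from \Cref{Doi-integrals} to pass from $\CH\ou BP$ to $\CH\ou BN$. Factoring through $\psi$ makes that step unnecessary.
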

\begin{proof}
Let $P$ be a right anti-cleft extension of $N$ over $\CH$. There is an obvious right $N$-linear and $\CH$-colinear map 
\[\tilde\psi\colon\CH\ou BN\to P;\quad X\ot n\mapsto\rho(X)n.\]
The map $\tilde\psi$ is a left module map if we endow the source with the module structure
\begin{align}\label{equ. left N module structure on weird product}
    m(X\ot n)=X_{[+]}\ot(X_{[-]}\la m)n
\end{align}
In fact 
\begin{align*}
    \tilde\psi(X_{[+]}\ot(X_{[-]}\la m)n)
    &=\rho(X_{[+]})(X_{[-]}\la m)n\\
    &=\rho(X_{[+]})\trho(X_{[-]+})m\rho(X_{[-]-})n\\
    &=\rho(X\t{}_{[+]})\trho(X\t{}_{[-]})m\rho(X\o)n\\
    &=m\rho(X)n=m\tilde\psi(X\ot n).
\end{align*}
Combining with the normal basis of $P$ we obtain 
\[\CH\ou BN\xrightarrow{\tilde\psi}P\xrightarrow{\psi\inv} N\#_\sigma \CH\]
with
\begin{align*}
    \psi\inv\tilde\psi(X\ot n)
    &=\psi\inv(\rho(X)n)\\
    &=\pi(\rho(X)\z n)\ot \rho(X)\o\\
    &=\pi(\rho(X\o)n)\ot X\t\\
    &=X\o\vee n\ot X\t
\end{align*}
using
\begin{align*}    \pi(\rho(X)n)&=\rho(X)\rz n\rho(\rho(X)\rmo)\\
    &=\rho(X_+)n\rho(X_-)=X\vee n.
\end{align*}
We show next that $\psi\inv\tilde\psi$ and hence $\tilde\psi$ is an isomorphism: Define
\[\alpha\colon N\ou B\CH\to \CH\ou BN;\quad n\ot X\mapsto X_{[+]}\ot X_{[-]}\wedge n \]
and find
\begin{align*}
    \alpha\psi\inv\tilde\psi(X\ot n)
    &=\alpha(X\o\vee n\ot  X\t)\\
    &=X\t{}_{[+]}\ot X\t{}_{[-]}\wedge (X\o\vee n)\\
    &=X_{[+]}\ot X_{[-]+}\wedge (X_{[-]-}\vee n)\\
    &=X\ot n
\end{align*}
as well as
\begin{align*}
    \psi\inv\tilde\psi\alpha(n\ot X)
    &=\psi\inv\tilde\psi( X_{[+]}\ot X_{[-]}\wedge n)\\
    &=X_{[+]}\o\vee (X_{[-]}\wedge n)\ot X_{[+]}\t\\ 
    &=X\o{}_{[+]}\vee (X\o{}_{[-]}\wedge n)\ot X\t\\
    &=n\ot X.
\end{align*}
\end{proof}

\begin{cor}\label{cor. right ff of weired basis}
  If $\CH$ is faithfully flat as right $B$-module, it follows that a right anti-cleft extension $N\subset P$ is right faithfully flat over $N$.
\end{cor}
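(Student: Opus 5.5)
The plan is to use the weird normal basis from the preceding Lemma and Definition: $P\cong\CH\ou BN$ as right $N$-modules, via $\tilde\psi(X\ot n)=\rho(X)n$. Right flatness and faithful flatness are invariant under isomorphisms of right $N$-modules, so it suffices to prove the claim for $\CH\ou BN$ with the obvious right $N$-module structure. The left $N$-module structure of \eqref{equ. left N module structure on weird product} plays no role, since only the right $N$-module structure matters here.

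For a left $N$-module $M$ we then have natural isomorphisms
\[(\CH\ou BN)\ou NM\cong \CH\ou B(N\ou NM)\cong\CH\ou BM,\]
where $M$ is viewed as a left $B$-module by restriction along $B\subseteq N$. Thus the functor $-\ou NM$ applied to $P$ becomes the composite of two functors: restriction of scalars ${}_N\M\to{}_B\M$, followed by $\CH\ou B-$.

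The argument then runs as follows. Restriction of scalars is exact and faithful, because it does not change the underlying vector spaces or maps. The functor $\CH\ou B-$ is exact and faithful because $\CH$ is faithfully flat as a right $B$-module. A composite of exact faithful functors is exact and faithful, so $P\ou N-$ is exact and reflects exactness; that is, $P$ is faithfully flat as a right $N$-module. If one uses the criterion "$P\ou NM=0$ implies $M=0$" instead, the same chain gives $\CH\ou BM=0$, hence $M=0$ as a $B$-module and therefore as an $N$-module.

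The only point needing care is that the isomorphism $(\CH\ou BN)\ou NM\cong\CH\ou BM$ is natural in $M$. This holds because $\tilde\psi$ is right $N$-linear and the right $N$-action on $\CH\ou BN$ is multiplication on the $N$ factor, so the identification is just the associativity of balanced tensor products. I expect no real obstacle beyond this check.
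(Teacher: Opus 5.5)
Your proposal is correct and follows essentially the paper's route: the corollary is stated without separate proof as an immediate consequence of the weird normal basis $P\cong\CH\ou BN$ of right $N$-modules (via $\tilde\psi(X\ot n)=\rho(X)n$). That isomorphism gives $P\ou NM\cong\CH\ou BM$ naturally in $M$, so faithful flatness of $\CH$ over $B$ transfers to $P$ over $N$.
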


\begin{rem}
    There is a formula for the algebra structure of $\CH\ou BN$ induced along $\tilde\psi$, which we could call a weird crossed product. We will find that formula later one. 
\end{rem}
\begin{cor}
    If $P$ is a left cleft comodule algebra over $\CH$, then there is an isomorphism $P\cong \CH\ou{\ol B}N$ of right $N$-modules and left $\CH$-comodules.
\end{cor}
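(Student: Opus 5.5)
The claim is the mirror image of the preceding Lemma and Definition, and I plan to get it by transport along the coopposite, exactly as in the second item of the remark following \Cref{Anti-cleft extensions}. A left cleft $\CH$-comodule algebra $P$ with $\ol B\subset N={}^{\co\CH}P$ is, by definition, a right anti-cleft comodule algebra over $\CH^\cop$. Here $\CH^\cop$ is a Hopf algebroid over $\ol B$: source and target are interchanged, and the roles of $\lambda$ and $\mu$, and hence of $X_{\pm}$ and $X_{[\pm]}$, are swapped. So I would first record how the base and tensor products translate. Over the base $\ol B$ of $\CH^\cop$, the balanced product $\CH^\cop\ou{\ol B}N$ appearing in the previous Lemma and Definition is exactly $\CH\ou{\ol B}N$, formed with the right $\ol B$-module structure of $\CH$ and the left $\ol B$-action on $N\subset P$. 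Right $\CH^\cop$-comodules are the same as left $\CH$-comodules. Applying the Lemma and Definition to $P$ over $\CH^\cop$ then yields an isomorphism $\CH\ou{\ol B}N\cong P$ of right $N$-modules and left $\CH$-comodules, which is the claim.

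To make this concrete and to check that nothing breaks in the translation, I would write down the map explicitly. Let $(\gamma,\tilde\gamma)$ be the cleaving, where $\tilde\gamma$ is colinear and left $\ol B$-linear with skewed inverse $\gamma$. The candidate isomorphism is
\[\tilde\psi\colon\CH\ou{\ol B}N\to P,\qquad X\ot n\mapsto \gamma(X)n.\]
It is well defined because $\gamma$ is right $\ol B$-linear; this is the image of right $B$-linearity of $\rho$ under the dictionary, and it follows from \Cref{skew inverse and linearity}. It is clearly right $N$-linear, and it is left colinear because $\gamma$ is colinear (\Cref{lem. skew inverse of also colinear}) and $N$ consists of coinvariants.

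For the inverse I would use the mirror of the map $\alpha$ from the preceding proof. The left-cleft analogues of the actions $\wedge,\vee$ from \Cref{prop. cleaving maps} are built with $X_{[\pm]}$ in place of $X_{\pm}$, and the left normal basis $N\ou{\ol B}\CH\cong P$ is obtained from \Cref{classical char of cleft extensions} applied over $\CH^\cop$. Composing with that normal basis gives the explicit candidate inverse $n\ot X\mapsto X_{+}\ot (X_{-}\wedge n)$ in the mirrored form. The two inverse identities then follow from the mirrored identities of \Cref{prop. cleaving maps}, namely $X_{+}\wedge(X_{-}\vee n)=\varepsilon(X)n$ and its companion, together with \eqref{mixed coass 1} and \eqref{mixed coass 2}.

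The only real obstacle is bookkeeping. One has to verify that the dictionary $\CH\leftrightarrow\CH^\cop$ sends each balanced tensor product and each module structure to the intended one. In particular, the tensor product $\ou B$ over the base of $\CH^\cop$ must become $\ou{\ol B}$ with the module structures stated in the corollary, and not some $\ot^{\ol B}$ variant. If that check succeeds, the corollary follows formally from the Lemma and Definition with no further computation.
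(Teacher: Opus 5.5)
Your proposal is correct and follows the paper's route. The paper states this corollary without a separate proof because it is the preceding Lemma and Definition applied to $P$ viewed as a right anti-cleft comodule algebra over $\CH^\cop$, and your check that the base tensor product becomes $\CH\ou{\ol B}N$ (right $\ol B$-action on $\CH$, left $\ol B$-action on $N$) is right.

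Two small slips occur only in your optional explicit part. First, right $\ol B$-linearity of $\gamma$ is part of the definition of a skewed inverse ($\gamma\in\Hom_{B-\ol B}(\CH,P)$), not a consequence of \Cref{skew inverse and linearity}. Second, the mirrored inverse checks use \eqref{mixed coass 2} and \eqref{equ. inverse lamda 5} rather than \eqref{mixed coass 1}.
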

Recall that for another right $\CH$-comodule algebra  $Q$,  reversible as a comodule, we have an isomorphism of algebras 
\[(Q\ou{\ol B}P)^{\co\CH}\cong (P\Box^\CH \ol Q)^\op\]
from \cite[Lemma~3.15]{HS25}
where multiplication on the left hand side is induced by that of $Q\ot P^{\op}$. On the right hand side $\ol Q$ is the opposite algebra of $Q$ endowed with the reverse comodule structure.

A particular case of interest is $Q=\CH$, where the above algebra is a left comodule algebra over $\CH$ and, under suitable flatness assumptions, the inverse of $P$ in the groupoid of bi-Galois extensions. A second important case arises where $Q=P$ and the algebra is the left Ehresmann Hopf algebroid of the Galois extension $P$.
\begin{rem}
    In the sequel we will write $P\inv=(\CH\ou{\ol B}P)^{\co\CH}$.
    In \cite{HS25}, it was shown that $P\inv$ is in fact the inverse of $P$ in the groupoid of bi-Galois extensions; however, the objects of this groupoid are bi-Galois extensions that are faithfully flat both over their right and their left coinvariants on both sides; here we impose less conditions.
    
    Assume that $\CH$ is faithfully flat over $B$ and $\BB$ on both sides. Then  $P$ is faithfully flat over $N$ on both sides as we remarked after defining a normal basis \Cref{def. normal basis} and in \Cref{cor. right ff of weired basis}. With this, the Ehresmann Hopf algebroid $\CL=L(P,\CH)$ is defined, and $P$ is a left $\CL$-Galois extension, but $P$ is not necessarily faithfully flat over $\ol B$ unless we assume this for $N$ over $B$. Therefore the proof in \cite{HS25} that $P\Box^\CH P\inv\cong\CL$ does not apply here. 
    
    Nonetheless, if $P\cong N\#_\sigma\CH$ is right anti-cleft then for any right $\CH$-comodule algebra $Q$ we have
    \[Q\Box^\CH P\inv=Q\Box^\CH(\CH\ou{\ol B}P)^{\co\CH}\cong (Q\Box^\CH (\CH\ou{\ol B}P))^{\co\CH}\cong(Q\ou{\ol B}P)^{\co\CH}\]
    using for the first isomorphism that the equalizer defining $(\CH\ou{\ol B}P)^{\co\CH}=P\Box^\CH\ol\CH$ is split. 
    That is to say, the map
     \[(N\ot_{B}\CH)\di \CH\di \ol \CH\to (N\ot_{B}\CH)\di \ol \CH,\quad n\ot X\ot Y\ot Z\mapsto n\ot \varepsilon(X)Y\ot Z.\]
    contracts (in the sense dual to \cite[p.~70, Lem.~3]{Pareigis1970}) the parallel pair of arrows defining the equalizer $(N\ou B\CH)\Box^\CH\ol\CH$.
    In particular,  if $\CH$ is faithfully flat as indicated above, then
    \[P\Box^\CH P\inv
    \cong(P\ou{\ol B}P)^{\co\CH}\cong \CL.\]
    In \cite{HS25} it was also shown that $P\inv$ is inverse to $P$ on the left, namely 
  \begin{equation}\label{inverse extension iso}
    \CH\to P\inv\Box^{\CL}P;\quad X\mapsto X_+\ot X_-{}\yi{}\ot X_-{}\er{}
    \end{equation}
    is an isomorphism, provided $P$ is faithfully flat over $\ol B$ on the left. Without this assumption, we still have a well-define map 
    \begin{equation}
        \label{inverse extension morph}
        \CH\to P\inv\times_N P
    \end{equation}
    given by the same formula.
\end{rem}

\begin{lem}\label{calculating inverse part one}
    Let $(\la,\sigma)$ be an invertible 2-cocycle of $\CH$ over $N$ and $P\cong N\#_\sigma\CH$ the associated crossed product. Let $Q$ be a  reversible right $\CH$-comodule.
    \begin{enumerate}
        \item We have an isomorphism 
        \begin{align}\label{ehresmann precursor}
    \theta\colon \ol N\ou{\ol B} Q\ni \overline{n}\ot q\mapsto q\rz\ot \rho(q\rmo)n\in (Q\ou {\ol B}P)^{\co\CH}. 
    \end{align} of $\ol N$-bimodules. The bimodule structure on the target comes from the $N$-bimodule structure of $P$. The left $\ol N$-module structure of the source is the obvious one, and the right $\ol N$-module structure is $(\ol n\ot q)\ol m=\ol{(q\o\la m)n}\ot q\z.$

        \item $P\inv$ is a left cleft extension of $N^\op$. The weak action of $\CH^\cop$ on $N^\op$ is the same as the action of $\CH$ on $N$, and the cocycle is the inverse $\sigma\inv.$ We have an isomorphism of left $\CH$-comodule algebras
        \[\Theta\colon \ol N\lcp_{\sigma\inv}\CH^{\cop}\to (\CH\ou{\ol B}N\#_\sigma\CH)^{\co\CH};\quad \ol n\lcp X\mapsto X_+\ot \rho(X_-)n\]
        \item Assume $Q$ is a comodule algebra. Then \eqref{ehresmann precursor} is an isomorphism of algebras if we endow the source with the multiplication given by
        \[(\ol n\ot q)(\ol m\ot q')=\ol{\sigma\inv(q\o,q'\o)(q\t\la m)n}\ot q\z q'\z.\]
  
        \item If we identify $P\inv$ with $\ol N\#_{\sigma\inv}\CH^{\cop}$, the homomorphism \eqref{inverse extension morph} corresponds to
        \[\CH\ni X\mapsto 1\ot X\o\ot 1\ot X\t\in (\ol N\#_{\sigma\inv}\CH)\times_N(N\#_\sigma\CH).\]  
      \item The instance 
     \[(Q\ou{\ol B}P)^{\co\CH}\ou N P\to Q\ou{\ol B}P\]
     of the counit of the adjunction in \Cref{hopf module adjunction} is a bijection.
\end{enumerate}
\end{lem}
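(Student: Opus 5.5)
The plan is to do everything in the crossed product picture $P=N\#_\sigma\CH$. We use the cleaving $\trho(X)=1\#X$ and $\rho(X)=\sigma\inv(X\o{}_{[+]},X\o{}_{[-]})\ot X\t$ from \Cref{rem. concrete correspondence}, together with the maps $\wedge,\vee$ of \Cref{prop. cleaving maps}. Recall that $\wedge$ coincides with the measuring $\la$.

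\textbf{Part (1).} First we check that $\theta$ lands in the coinvariants. Since $\rho$ is colinear,
\[q\rz\z\ot \rho(q\rmo)\z n\ot q\rz\o\rho(q\rmo)\o=q\rz\z\ot\rho(q\rmo\o)n\ot q\rz\o q\rmo\t.\]
By \eqref{equ. regular map 7} this becomes
\[q\z\ot \rho(q\o{}_{-})n\ot q\o{}_{+}q\o{}_{-}\cdots,\]
and it collapses using \eqref{equ. inverse lamda 1}, as in the proof of \Cref{Doi-integrals}(1). Well-definedness over $\ol B$ follows from the left $\ol B$-linearity of $\rho$.

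For the inverse, we use the splitting $\pi$ (the left-linear projection $P\to N$, $\pi(n\ot X)=n\varepsilon(X)$) together with the trivial cleaving. We propose
\[\theta\inv(q\ot p)=\ol{\pi(\trho(q\o)p)}\ot q\z.\]
This is well defined on coinvariants because the equalizer $(Q\ou{\ol B}P)^{\co\CH}$ is split, as in the contraction remark preceding the lemma. The two composites are then checked using $\trho\skpr\rho=\eta\varepsilon$ and $\rho\askpr\trho=\eta\ol\varepsilon$.

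The bimodule formulas come next. For the right $\ol N$-structure we compute
\[\theta(\ol n\ot q)m=q\rz\ot m\rho(q\rmo)n.\]
Writing $m\rho(X)=\rho(X_{[+]})(X_{[-]}\wedge m)$, which is the left module structure \eqref{equ. left N module structure on weird product} on the weird normal basis, and then applying \eqref{equ. regular map 8} turns this into $\theta$ of $\ol{(q\o\la m)n}\ot q\z$.

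\textbf{Parts (2) and (3).} Part (2) is the case $Q=\CH$ of (1) and (3), with $\CH$ reversed via $\lambda$ so that $\CH\rz\ot\CH\rmo$ becomes $X_+\ot X_-$. This gives the stated formula for $\Theta$.

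For (3), multiplication in the target is that of $Q\ot P^{\op}$. We compute
\[\theta(\ol n\ot q)\theta(\ol m\ot q')=q\rz q'\rz\ot \rho(q'\rmo)m\rho(q\rmo)n.\]
We move $m$ past $\rho(q\rmo)$ using the weird-basis relation, then combine $\rho(Y)\rho(X)$ using the expression for $\sigma\inv$ in terms of the cleaving from \Cref{rem. concrete correspondence}; in the form needed this reads $\rho(Y_{[+]})\rho(X_{[+]})\cdots=\sigma\inv\cdots$. We then use \eqref{equ. regular map 3} to recombine $q\rz q'\rz$ into $(qq')\rz$.

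This step is the main obstacle. It needs an identity of the form
\[\rho(Y)\rho(X)=\rho\bigl((X\t Y\t)\bigr)\,\sigma\inv(X\o,Y\o)\]
with the appropriate $[\pm]$-decorations. I would derive it by applying $\trho(\,\cdot\,_+)$ and cancelling, in the style of \Cref{lem. cancelable}.

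The left comodule algebra structure in (2), and the fact that it is left cleft with cocycle $\sigma\inv$, then follow by comparing with the formula in \Cref{left cross product in terms of right cross product}. The right cocycle identity \eqref{equ. right cocycle} is exactly the associativity needed.

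\textbf{Part (4).} We substitute into \eqref{inverse extension iso}. By \Cref{lem. even half anti cleft extension is Galois}, $X_-{}\yi{}\ot X_-{}\er{}=\rho(X_{-[+]})\ot\trho(X_{-[-]})$. Using \eqref{mixed coass 2}, this becomes
\[X\o{}_+\ot\rho(X\o{}_-)\ot 1\#X\t,\]
and the first two factors form $\Theta(1\lcp X\o)$.

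\textbf{Part (5).} By (1) the source is $(\ol N\ou{\ol B}Q)\ou N P$, which is isomorphic to $Q\ou{\ol B}P$ via the normal basis $P\cong N\ou B\CH$. Explicitly, the counit sends
\[\ol n\ot q\ot m\#X\ \mapsto\ q\rz\ot\rho(q\rmo)nm\trho(X).\]
An inverse is written down using $\psi$, namely $q\ot p\mapsto\theta\inv(q\ot p\z{}^{l}\ldots)\ot\trho(p\o)$. Bijectivity then reduces once more to the skewed inverse relations.
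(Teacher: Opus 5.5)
\textbf{Part (5) has a genuine gap.} The source is identified with $Q\di P$ via $q\ot p\mapsto(\ol 1\ot q)\ot p$. No normal basis enters here, and $Q\ou{\ol B}P$ is the target, not the source. Under this identification the counit reads $q\ot p\mapsto q\rz\ot\rho(q\rmo)p$. Its inverse has to undo the factor $\rho(q\rmo)$ produced by $q$, so it must use the coaction of $q$: it is $q\ot p\mapsto q\z\ot\trho(q\o)p$.

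Your formula, whose $P$-factor is $\trho(p\o)$, cannot work. For $p=1$ it sends $q\ot 1$ to an element $\zeta\ot 1$ with $\zeta\in\ol N\ou{\ol B}Q$. The counit then returns $\theta(\zeta)\in(Q\ou{\ol B}P)^{\co\CH}$, which is not $q\ot 1$ unless $q\ot 1$ is coinvariant (take $Q=\CH$ and $q$ a nontrivial grouplike in the Hopf algebra case). With the correct inverse, \eqref{equ. regular map 7} and \eqref{equ. regular map 8} turn the two composites into $q\z\ot\trho(q\o{}_+)\rho(q\o{}_-)p$ and $q\rz\ot\rho(q\rmo{}_{[+]})\trho(q\rmo{}_{[-]})p$. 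These collapse by the two skewed-inverse relations.

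\textbf{Part (3): the step you flag as the main obstacle is left open, and your version of the identity misplaces the factors.} What you need is
\[\rho(Y)\rho(X)=\rho(Y_{[+]}X_{[+]})\,\sigma\inv(X_{[-]},Y_{[-]}),\]
with the order $Y_{[+]}X_{[+]}$ inside $\rho$. Apply it with $X=q\z\rmo$ and $Y=q'\rmo$, after moving $m$ and using \eqref{equ. regular map 8}; then \eqref{equ. regular map 8} again and \eqref{equ. regular map 3} give the claimed product.

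Granting $\sigma\inv(X,Y)=\trho(X_+Y_+)\rho(Y_-)\rho(X_-)$, this identity follows directly from \eqref{mixed coass 1}, \eqref{equ. inverse mu 3}, $\rho\askpr\trho=\eta\ol\varepsilon$ and the counit axiom. No cancellation argument is needed. However, that formula for $\sigma\inv$ is only listed in \Cref{rem. concrete correspondence}; nothing in the proof of \Cref{classical char of cleft extensions} derives it. On your route it therefore needs its own proof, for instance by checking \eqref{2-cocycle 9}, \eqref{2-cocycle 10} and \eqref{2-cocycle 12} for the right-hand side and invoking \Cref{monoidal functor}(4).

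\textbf{How the paper avoids this.} It uses $\Theta$ only to see that $P\inv$ has a normal basis with some unknown cocycle $\alpha$. Comparing $f(X)f(Y)$ with $f(XY)$ for the algebra map of part (4) gives $\alpha(X\o,Y\o)\sigma(X\t,Y\t)=\varepsilon(XY)$, so $\alpha=\sigma\inv$ by \Cref{monoidal functor}(4). Part (3) is then read off from the crossed product $\ol N\lcp_{\sigma\inv}\CH^\cop$.

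\textbf{Part (1).} Here too your route differs: the paper obtains $\theta$ as a composite of isomorphisms through the weird normal basis $P\cong\CH\ou BN$. Your explicit $\theta\inv(q\ot p)=\ol{\pi(\trho(q\o)p)}\ot q\z$ is correct. But $\theta\theta\inv=\id$ also needs the coinvariance of $q\ot p$: the cleaving relations only reduce it to $q\rz\ot p\rz\rho(p\rmo q\rmo)$. The split-equalizer remark you cite concerns only $Q=\CH$ and plays no role here.
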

\begin{proof}
 For (1),   we know that $P\cong \CH\ou BN$ as right $\CH$-comodule and right $N$-module. Thus we have an isomorphism 
    \[(Q\ou{\ol B}P)^{\co\CH} \cong (Q\ou{\ol B}(\CH\ou BN))^{\co\CH}\cong(\CH\ou BN)\Box^\CH \ol Q\cong \ol N\ou{\ol B}Q\]
    where $\ol Q$ denotes $Q$ with the reverse comodule structure; the last isomorphism is given by
    \begin{align*}
        (\CH\ou BN)\Box^\CH \ol Q\cong& \ol N\ou{\ol B}Q\\
        X\ot n\ot q\mapsto& \overline{n}\ot q\,\overline{\varepsilon(X)}\\
        q\rmo\ot n\ot q\rz\mapsfrom& \overline{n}\ot q.
    \end{align*}
    The composed isomorphism has the form stated in \eqref{ehresmann precursor}. It is obviously left $\ol N$-linear, and \begin{align*}\theta(\ol n\ot q)\ol m&=(q\rz\ot\rho(q\rmo)n)\ol m\\&=q\rz\ot m\rho(q\rmo)n
    \\&=q\rz\ot \rho(q\rmo{}_{[+]})(q\rmo{}_{[-]}\la m)n
    \\&=q\z\rz\ot\rho(q\z\rmo)(q\o\la m)n
    \\&=\theta(\ol{(q\o\la m)n}\ot q\z).
    \end{align*}

   For (2),     we consider the special case where $Q=\CH$, and get
    \[\Theta\colon \overline{N}\ou \BB\CH\ni \overline{n}\ot X\mapsto X_+\ot\rho(X_-)n\in (\CH\ou{\ol  B} P)^{\co \CH}=P\inv,\]
    and this map is also left $\CH$-colinear. In particular $P\inv$ as a left $\CH$-comodule algebra with coinvariants $\ol N$ has a normal basis, hence is a crossed product. The action in it is the same as the original action of $\CH$ on $N$ by the linearity statement in (1). We denote the cocycle by $\alpha:\CH^{\cop}\ot_{B} \CH^{\cop}\to \ol N$. 
    
 The multiplication formula in (3) follows by again considering the identification of $(Q\ou{\ol B}P)^{\co\CH}$ with $\ol N\ou{\ol B}Q $ and using the formula for multiplication in the crossed product $\ol N\#_{\sigma\inv}\CH^{\cop}$.

    To see that \eqref{inverse extension morph} corresponds to the claimed map in (4) we check
    \begin{align*}
        X_+\ot X_-{}\yi{}\ot X_-{}\er{}
        &=X_+\ot \rho(X_{-[+]})\ot \trho(X_{-[-]})\\
        &=X\o{}_{+}\ot \rho(X\o{}_{-})\ot\trho(X\t)\\
        &=\Theta(1\#X\o )\ot (1\ot X\t).
    \end{align*}
    The morphism  
    \[f\colon \CH\cong P\inv\Box^{L(P,\CH)}P\to (\ol N\ou{\ol B}\CH)\Box^{L(P,\CH)}(N\ou B\CH)\]
    we just described is an algebra map. We have
    \begin{align*}
        f(X)f(Y)&=(1\ot X\o)(1\ot Y\o)\ot (1\ot X\t)(1\ot Y\t)\\
        &=\alpha(X\t,Y\t)\ot X\o Y\o\ot\sigma(X\th,Y\th)\ot X\fo Y\fo\\
        f(XY)&=1\ot X\o Y\o\ot 1\ot X\t Y\t.
    \end{align*}
    Applying $\varepsilon$ to the $\CH$ factors yields \[\alpha(X\o,Y\o)\sigma(X\t,Y\t)=\varepsilon(XY)\]
    which is  enough to show that $\alpha=\sigma\inv$ by \Cref{monoidal functor}, as claimed. In particular the cocycle in the crossed product $P\inv$ is also invertible, and thus $P\inv$ is cleft.

    To prove (5) we precompose the counit of the adjunction with the map \eqref{ehresmann precursor}, we get
    \[Q\di P\cong (\ol N\ou{\ol B}Q)\ou N P\xrightarrow{\theta\ot 1}(Q\ou {\ol B}P)^{\co\CH}\ou NP\rightarrow Q\ou{\ol B}P.\]
    With $p=n\trho(X)$, the map is given by 
    \[q\ot p\mapsto \ol n\ot q\ot \trho(X)\mapsto q\rz\ot\rho(q\rmo)n\ot \trho(X)\mapsto q\rz\ot\rho(q\rmo)n\trho(X)=q\rz\ot\rho(q\rmo)p. \]
    This map is indeed a bijection with the inverse
    \[Q\ou{\ol B}P\ni q\ot p\mapsto q\z\ot \tilde\rho(q\o)p\in Q\di P.\]
    Indeed
    \begin{align*}
        q\rz\z\ot \trho(q\rz\o)\rho(q\rmo)p&=q\z\ot\trho(q\o{}_{+})\rho(q\o{}_{-})p=q\ot p\\
        q\z\rz\ot\rho(q\z\rmo)\trho(q\o)p
        &=q\rz\ot \rho(q\rmo{}_{[+]})\trho(q\rmo{}_{[-]})p=q\ot p.
    \end{align*}
\end{proof}

\begin{rem}
    We have shown that the passage from a crossed product $P$ to $P\inv$ amounts to keeping the action (interpreting it as an action of $\CH^\cop$ on $N^\op$) and inverting the cocycle (interpreting it as a cocycle on $\CH^\cop$ with values in $N^\op$). Of course we can apply this procedure in the coopposite situation, then passing from a left cleft extension to an anti-cleft right extension. We then get $(P\inv)\inv=P$.
\end{rem}
\begin{lem}Assume the hypotheses of \Cref{calculating inverse part one}.
\begin{enumerate}
       \item We have an isomorphism 
     \[\Phi\colon Q\ou{\ol B}\ol N\to \left(Q\ou{\ol B}(N\#_\sigma\CH)\right)^{\co\CH};\quad q\ot\ol n\mapsto q\rz\ot n\ot q\rmo\]
     which is an isomorphism of algebras if we endow the source with the multiplication 
       \[(p\ot \overline{n})(q\ot \overline{m})=  p\rz q\rz\ot \overline{m(q\rmt\la n)\sigma(q\rmo,p\rmo)}.\]
     In particular $P\inv\cong \CH\ou \BB \ol N$ with multiplication 
     \begin{align}\label{equ. weird left crossed product}
         (X\ot \overline{n})(Y\ot \overline{m})= X_+Y_+\ot \overline{m(Y_{-}\o\la n)\sigma(Y_{-}\t,X_-)}
     \end{align}

     \item $P\cong \CH\ot_{B}N$ with multiplication 
     \begin{equation}\label{weird crossed product}(X\ot n)(Y\ot m)=X_{[+]}Y_{[+]}\ot \sigma^{-1}(Y_{[-]}\o, X_{[-]})\,(Y_{[-]}\t\la n)\,m.\end{equation}which we call weird crossed product.
\end{enumerate}

\end{lem}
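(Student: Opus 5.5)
For (1), I would mirror the proof of \Cref{calculating inverse part one}(1), but use the ordinary normal basis $P\cong N\ou B\CH$ from \Cref{classical char of cleft extensions} in place of the weird one. Under this identification,
\[
(Q\ou{\ol B}(N\ou B\CH))^{\co\CH}\cong (N\ou B\CH)\Box^\CH\ol Q\ \text{(suitably interpreted)}.
\]
More directly, I would check three things about $\Phi(q\ot\ol n)=q\rz\ot n\ot q\rmo$. First, it is well defined: this uses the balancing $\ol b$ and \eqref{equ. inverse lamda 10}. Second, it lands in the coinvariants: compute $\delta$ on the last factor and use \eqref{equ. regular map 7}/\eqref{equ. regular map 8}, so that $q\rz\ot n\ot q\rmo\o\ot q\rmo\t$ matches the coaction on $q\rz$. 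Third, it is bijective, with candidate inverse
\[
q\ot(n\ot X)\mapsto q\,\overline{\varepsilon(X)}\ot\ol n
\]
(suitably placed). Composing the two should give the identity, one way by $q\rz\ol{\varepsilon(q\rmo)}=q$. For the other way, given a coinvariant element $\sum q_i\ot n_i\ot X_i$, the coinvariance condition lets one rewrite $X_i$ via the reverse coaction of $q_i$. This is the equalizer-splitting argument already used in the remark before \Cref{calculating inverse part one}, via the contracting map $n\ot X\ot Y\ot Z\mapsto n\ot\varepsilon(X)Y\ot Z$.

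For the multiplication, I would transport the product of $Q\ot P^{\op}$ along $\Phi$. One has
\[
\Phi(p\ot\ol n)\Phi(q\ot\ol m)=p\rz q\rz\ot (m\ot q\rmo)(n\ot p\rmo).
\]
Expanding the crossed product gives
\[
m\,(q\rmo\o\la n)\,\sigma(q\rmo\t,p\rmo\o)\ot q\rmo\th p\rmo\t.
\]
I would then use \eqref{equ. regular map 3} for the reverse coaction of $pq$, namely $(pq)\rz\ot(pq)\rmo=p\rz q\rz\ot q\rmo p\rmo$, together with coassociativity of the reverse coaction (iterating gives $q\rmt,q\rmo$) to recognise this as $\Phi$ applied to $p\rz q\rz\ot\overline{m(q\rmt\la n)\sigma(q\rmo,p\rmo)}$. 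The expected obstacle is bookkeeping: the three Sweedler legs of the $\CH$-factor of $(m\ot q\rmo)(n\ot p\rmo)$ have to be matched against iterated reverse coactions of $p\rz q\rz$. I would first push everything into the form $\Phi(\cdots)$ by writing $q\rmo\th p\rmo\t$ as the reverse coaction leg of $p\rz q\rz$.

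The specialisation $Q=\CH$ uses the reversed coaction $X\mapsto X_+\ot X_-$ from \eqref{equ. inverse lamda 5}, and gives \eqref{equ. weird left crossed product} directly, with $q\rmt,q\rmo$ becoming $Y_-\o,Y_-\t$.

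For (2), I would apply the same reasoning to the opposite situation. The cleanest route is to apply part (1) to the left cleft extension $P\inv$, viewed over $\CH^\cop$ with cocycle $\sigma\inv$, and use $(P\inv)\inv=P$. This translates $\sigma$ into $\sigma\inv$, the reverse coaction $X_\pm$ into $X_{[\pm]}$, and $\Delta$ into $\Delta^{\cop}$. That turns \eqref{equ. weird left crossed product} into \eqref{weird crossed product}. Alternatively, I would transport the product of $P$ along the isomorphism $\tilde\psi(X\ot n)=\rho(X)n$ of the weird normal basis lemma. That requires computing $\rho(X)n\rho(Y)m$: rewrite $n\rho(Y)=\rho(Y_{[+]})(Y_{[-]}\la n)$ exactly as in that lemma, and then $\rho(X)\rho(Y)=\rho(X_{[+]}Y_{[+]})\sigma\inv(Y_{[-]},X_{[-]})$. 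This last identity I would derive from the formula $\sigma\inv(X,Y)=\trho(X_+Y_+)\rho(Y_-)\rho(X_-)$ of \Cref{rem. concrete correspondence}, using \eqref{equ. inverse mu 3} and the anti-skewed inverse relation. I expect this $\rho\rho$ identity to be the main step requiring care.
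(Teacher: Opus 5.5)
Your proposal is correct and follows the paper's proof. The paper gets $\Phi$ as the composite $(Q\ou{\ol B}(N\#_\sigma\CH))^{\co\CH}\cong(N\#_\sigma\CH)\Box^\CH\ol Q\cong N\ou B\ol Q$, which takes care of the well-definedness and bijectivity you check by hand. It verifies multiplicativity by exactly your computation using \eqref{equ. regular map 3} and coassociativity of the reverse coaction, and it deduces (2) from $(P\inv)\inv=P$ in the co-opposite setting. One small correction: the balancing step for $\Phi$ uses the analogue of \eqref{equ. inverse lamda 9} for $Q$, namely $(q\ol b)\rz\ot(q\ol b)\rmo=q\rz\ot bq\rmo$, rather than \eqref{equ. inverse lamda 10}.
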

\begin{proof}
    We consider the isomorphism 
    \[(Q\ou{\ol B}(N\#_\sigma\CH))^{\co\CH}\cong (N\#_\sigma\CH)\Box^\CH \ol Q\cong N \ou B\ol Q\] which has the claimed form after  identifying $Q\ot_{\BB}\ol N$ with $N\ot_{B} \ol Q$.         To check the algebra map property we calculate, under the same identification,
        \begin{align*}
            \Phi(n\ot \ol p)\Phi(m\ot \ol q)
            &=(p\rz\ot n\ot p\rmo)(q\rz\ot m\ot q\rmo)\\
            &=p\rz q\rz\ot (m\ot q\rmo)(n\ot p\rmo)\\
            &=p\rz q\rz\ot m(q\rmth \la n)\sigma(q\rmt,p\rmt)\ot q\rmo p\rmo\\
            &=\Phi( m(q\rmt\la n)\sigma(q\rmo,p\rmo)\ot \ol {p\rz q\rz})
        \end{align*}

    For the second statement consider that $P=(P\inv)\inv$ so that we can apply the previous result in the co-opposite version.
\end{proof}

\begin{rem}
  To recover the action and cocycle of a crossed product, one can simply apply the projection $\pi\colon N\#_\sigma\CH\to N, n\ot X\mapsto n\varepsilon(X)$ to suitable products, namely
  \[X\la b=\pi((1\ot X)(b\ot 1)),\quad \sigma(X,Y)=\pi((1\ot X)(1\ot Y)).\]
  This can be viewed as a special case of \Cref{free comodule bijection}:
  \[\Hom^\CH(\CH,N\ou B\CH)\cong\Hom_{-B}(\CH,N)\]
  maps $X\mapsto (1\ot X)(n\ot 1)$ for fixed $n$ to $X\mapsto X\la b$, and
  \[\Hom^\CH(\CH\ou \BB\CH,N\ou B\CH)\cong \Hom_{-B}(\CH\ou\BB\CH,N)\]
  maps $(X\ot Y)\mapsto \pi((1\ot X)(1\ot Y))$ to $\sigma$.
  
   In the same way the  action and cocycle in weird crossed product formulas like \eqref{equ. weird left crossed product} and \eqref{weird crossed product} can be recovered from the multiplication  using the maps in \Cref{skew bijection for colinear maps}. Namely, identifying $P=\CH\ou BN$ along the weird normal basis
   \begin{align*}
       \Hom_{B-}(\CH,N)&\cong\Hom^\CH(\CH,\CH\ou BN)\\
       (Y\mapsto Y\la n)&\mapsto (Y\mapsto (1\ot n)(Y\ot 1))\\
       \Hom_{B-}(\CH\ou{\BB}\CH,N)&\cong\Hom^\CH(\CH\ou B\CH,\CH\ou BN)\\
       \sigma\inv&\mapsto(X\ot Y\mapsto (X\ot 1)(Y\ot 1)).
   \end{align*}
 Note though that \Cref{skew bijection for colinear maps} makes a faithful flatness assumption on $N$ for establishing the relevant bijections, which we do not need here.
\end{rem}
\begin{rem}
    Consider the special case where $\CH$ is an ordinary Hopf algebra. Then the product formula on $\CH\ou{\ol B}\ol N=(N\#_\sigma\CH)\inv$ reads
    \begin{align*}
        (X\ot\ol n)(Y\ot \ol m)
        &=X\o Y\o\ot \ol{\sigma(S(Y\t)\t,S(X\t))}(\ol{S(Y\t)\o\la n})\ol m\\
        &=X\o Y\o\ot\ol{\sigma(S(Y\t),S(X\t))}(\ol{S(Y\th)\la n})\ol m\\
        &=X\o Y\o\ot\kappa(X\t,Y\t)(\ol n\triangleleft Y\th)\ol m
    \end{align*}
    if we put $\ol n\triangleleft X=\ol{S(X)\la n}$  and $\kappa(X,Y)=\ol{\sigma(S(Y),S(X))}$. This is the formula for an ordinary left crossed extension (a cleft left comodule algebra) with $\CH$ acting on the right on the coinvariant subalgebra, here $\ol N$. We pointed out already in \Cref{first left cleft remark} that this usual Hopf algebra left crossed product does not match our definition of a left cleft extension. Nonetheless we see thus that the formula \eqref{equ. weird left crossed product} does generalize the usual left crossed product.
\end{rem}

\begin{LD}
    Let  $(\sigma,\la)$ be  an invertible  2-cocycle of $\CH$ over $N$. The associated \emph{Connes-Moscovici Hopf algebroid} $N\#_{\sigma}\CH\#_{\sigma}N$ is the vector space $N\ot_{B} \CH\diamond_{B} N$ with the $N$-coring structure:
    \[s(n)=n\ot 1\ot 1,\quad t(n)=1\ot 1\ot n,\quad \]
    \[\Delta(n\ot X\ot m)=n\ot X\o\ot 1\ot 1\ot X\t\ot m,\quad \varepsilon(n\ot X\ot m)=n\varepsilon(X)m,\]
and the algebra structure:
    \[(n\ot X\ot m)(n'\ot Y\ot m')=n(X\o\la n')\sigma(X_{\scriptscriptstyle{(2)}}, Y\o)\ot X_{\scriptscriptstyle{(3)}}Y_{\scriptscriptstyle{(2)}}\ot \sigma^{-1}(X_{\scriptscriptstyle{(4)}}, Y_{\scriptscriptstyle{(3)}})(X_{\scriptscriptstyle{(5)}}\la m')m,\]
    for all $X,Y\in \CH$ and $n, n', m, m'\in N$. The translation maps are given by
    \[(n\ot X\ot m)_{+}\ot(n\ot X\ot m)_{-}=(n\ot X_{+}\o\ot X_{+}\t\wedge 1)\ot (X_{-}\o\vee m\ot X_{-}\t\ot 1)\]
    \[(n\ot X\ot m)_{[+]}\ot(n\ot X\ot m)_{[-]}=(X_{[+]}\o\vee 1\ot X_{[+]}\t\ot  m)\ot (1\ot X_{[-]}\o\ot X_{[-]}\t\wedge n).\]
\end{LD}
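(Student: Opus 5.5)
The plan is to avoid checking the Hopf algebroid axioms on $N\ot_B\CH\di_B N$ directly. Instead I will transport the structure from the Ehresmann construction $L(P,\CH)=(P\ou{\ol B}P^{\op})^{\co\CH}$ with $P=N\#_\sigma\CH$. The first step is the linear isomorphism. Apply \Cref{calculating inverse part one}(1) with $Q=P$ to get
\[\theta\colon \ol N\ou{\ol B}P\to (P\ou{\ol B}P)^{\co\CH}.\]
Writing $P=N\ot_B\CH$ turns the source into $N\ot_B\CH\di_B N$; explicitly, $n\ot X\ot m\mapsto (n\ot X_+)\ot \rho(X_-)m$. By \Cref{calculating inverse part one}(3) this is an algebra isomorphism for the product
\[(\ol m\ot q)(\ol m'\ot q')=\ol{\sigma\inv(q\o,q'\o)(q\t\la m')m}\ot q\z q'\z.\]
Inserting $q=n\ot X$, $q'=n'\ot Y$ and the crossed product formula for $q\z q'\z$ gives exactly the stated multiplication with five Sweedler legs. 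The only thing to watch is that the comodule structure of $N\#_\sigma\CH$ sits on the $\CH$ factor, so that $q\o$ and $q\t$ become $X\fo$ and $X_{\scriptscriptstyle{(5)}}$.

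Next I will identify source, target and unit. Under $\theta$, $s(n)=n\ot1\ot1$ maps to $(n\ot1)\ot 1$ and $t(n)=1\ot1\ot n$ maps to $1\ot n$. These are the source and target of $L(P,\CH)$, so the $N^e$-ring structures agree.

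For the coring structure I transport $\Delta(p\ot q)=p\ot \yi{q\o}\ot\er{q\o}\ot q\z$ and $\varepsilon(p\ot q)=pq$. For $\varepsilon$ we get $(n\ot X_+)\rho(X_-)m=n\,\trho(X_+)\rho(X_-)m=n\varepsilon(X)m$, using $n\ot X=n\trho(X)$ and $\trho\skpr\rho=\eta\varepsilon$. For $\Delta$ I use \Cref{lem. even half anti cleft extension is Galois}, namely $\yi{Y}\ot\er{Y}=\rho(Y_{[+]})\ot\trho(Y_{[-]})$. Together with \eqref{mixed coass 1}, \eqref{mixed coass 2} and the identity in \Cref{prop. cleaving maps}, this should collapse the image to $\theta(n\ot X\o\ot1)\ot_N\theta(1\ot X\t\ot m)$. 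To make the comparison meaningful I first need the equivalence between the Takeuchi-type target of the Ehresmann coproduct and the transported $\ot_N$. This is where the splitting in \Cref{calculating inverse part one}(5) and the contracting-equalizer remark are used, in place of flatness.

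Finally, the translation maps can be checked directly rather than transported. One verifies $\lambda$ applied to the proposed $X_+\ot X_-$ (resp.\ $\mu$ applied to the proposed $X_{[+]}\ot X_{[-]}$) equals $(n\ot X\ot m)\di 1$ (resp.\ $1\di(n\ot X\ot m)$), using the multiplication, the identities $X_+\wedge(X_-\vee n)=\varepsilon(X)n$ and $X_{[+]}\vee(X_{[-]}\wedge n)=n\varepsilon(X)$ from \Cref{prop. cleaving maps}, and the cocycle inverse relations \eqref{2-cocycle 11}–\eqref{2-cocycle 12}. One also checks the reverse composites, which gives bijectivity.

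The main obstacle I expect is the coproduct step. It needs the compatibility between $\theta\ot_N\theta$ and the equalizer-type product used for $L(P,\CH)$ without flatness hypotheses, together with the Sweedler bookkeeping across $\ot_B$, $\ot_{\BB}$ and $\di$. If transport fails there, the fallback is to verify coassociativity and the multiplicativity of $\Delta$ directly on $N\ot_B\CH\di N$. Coassociativity is immediate from the formula for $\Delta$. Multiplicativity reduces to the cancellation $\sigma\inv(X\fo,Y\th)\sigma(X_{\scriptscriptstyle{(5)}},Y_{\scriptscriptstyle{(4)}})$ appearing in the middle $1\ot1$ slot, which follows from \eqref{2-cocycle 12}.
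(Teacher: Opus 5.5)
Your treatment of the algebra structure is correct and is what the paper does: $\theta$ from Lemma~\ref{calculating inverse part one}(1),(3) with $Q=P$ gives the five-leg product, and $s$, $t$, $\varepsilon$ transport as you say. Your coproduct collapse is also correct. By \eqref{mixed coass 2} and then \eqref{mixed coass 1}, $\theta(n\ot X\o\ot 1)\ot\theta(1\ot X\t\ot m)=n\trho(X_+)\ot\rho(X_{-(2)[+]})\ot\trho(X_{-(2)[-]})\ot\rho(X_{-(1)})m$, which is the Ehresmann formula.

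But matching formulas buys nothing here. The coproduct of $L(P,\CH)$ and its Hopf property come from Lemma and Definition~\ref{lem. bigalois given by ehresmann groupoid}, which assumes faithful flatness of $\CH$ over $B,\ol B$ and of $P$ over $N$. None of this is assumed in the statement. Without it you do not know that $L(P,\CH)$ is a bialgebroid, let alone that its $\lambda,\mu$ are bijective. So there is no Hopf structure to transport, and your argument actually rests on the fallback. For the bialgebroid axioms the fallback works. You should add that $\Delta$ lands in the Takeuchi product and is counital; both are easy.

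The genuine gap is the Hopf property, which is the real content of the statement. You assert that $\lambda(\xi_+\ot\xi_-)=\xi\diamond 1$ and the reverse composite follow from the multiplication, the $\wedge/\vee$ identities and \eqref{2-cocycle 11}--\eqref{2-cocycle 12}. In $N\ot_B\CH\diamond_B N$, however, the factor $\xi_{+(2)}\xi_-$ equals $(X_{+(2)}\la(X_{-(1)}\vee m))\sigma(X_{+(3)},X_{-(2)})\ot X_{+(4)}X_{-(3)}\ot\sigma\inv(X_{+(5)},X_{-(4)})(X_{+(6)}\wedge 1)$. Here the legs of $X_+$ and $X_-$ are interleaved and the two cocycle factors sit in different tensor slots. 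The cocycle-inverse relations need adjacent, matching arguments, so they cannot be applied, and the $\wedge/\vee$ identities do not apply as they stand.

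A direct check becomes possible only after rewriting everything through $\rho,\trho$. One finds $\theta(\xi_+)=n\trho(X_{++})\ot\trho(X_{+-})$ and $\theta(\xi_-)=\rho(X_{-+})m\ot\rho(X_{--})$. Then \eqref{equ. inverse lamda 5}, \eqref{equ. inverse lamda 6}, \eqref{mixed coass 1}, \eqref{mixed coass 2} and the skewed-inverse relations do give $\lambda(\xi_+\ot\xi_-)=\xi\diamond 1$. You would still owe $\xi\o{}_+\ot\xi\o{}_-\xi\t=\xi\ot 1$, the balancing needed to define $\lambda\inv$ on $\CL\diamond_N\CL$, and all of this again for $\mu$.

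The paper avoids this computation with an idea your proposal does not use.
\begin{itemize}
\item Lemma~\ref{calculating inverse part one}(5) gives a bijection $\kappa\colon\CL\diamond_N P\to P\ou{\ol B}P$, $(p\ot q)\ot q'\mapsto p\ot qq'$.
\item This yields the left $\CL$-coaction on $P$ and the universal property of $\CL$, hence $\Delta$, with no flatness. It also identifies the Galois map with $\kappa\inv$.
\item The argument of \cite[Lemma~3.21]{HS25} then makes $\lambda$ bijective. It only needs $(\text{---})\ou NP$ to reflect isomorphisms, which holds because $N$ is a left $N$-direct summand of $P$ by Lemma~\ref{Doi-integrals}.
\item Anti-left Hopf follows the same way from $P\inv$, and the stated translation map falls out of the diagram chase in Theorem~\ref{Ehresmann in the cleft case}.
\end{itemize}
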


That the Connes-Moscovici Hopf algebroid is in fact a Hopf algebroid (in particular a bialgebroid) is a consequence of the following Theorem. Note that if we assumed that $P$ is suitably faithfully flat, then  the Theorem would just compute the Ehresmann Hopf algebroid recalled in \Cref{lem. bigalois given by ehresmann groupoid} and establish that it is isomorphic to the Connes-Moscovici Hopf algebroid. For a cleft extension, the faithful flatness required would already follow from faithful flatness assumptions on $\CH$.  Without the faithful flatness assumption we cannot refer to that Ehresmann Hopf algebroid, but in our cleft situation it can nonetheless be explicitly constructed. 
\begin{thm}\label{Ehresmann in the cleft case}
    Let $(\la,\sigma)$ be an invertible cocycle of $\CH$ over $N$. The Connes-Moscovici  Hopf algebroid $\CL$  is an $N$-Hopf algebroid. It is the   left Ehresmann Hopf algebroid $L(N\#_\sigma\CH,\CH)$ of the right anti-Galois extension $N\#_\sigma\CH$ in the following sense: A left coaction of  $\CL$ on $N\#_\sigma\CH$ is given by
    \begin{align*}
      N\#_\sigma\CH&\to N\#_\sigma\CH\#_\sigma N\times_N N \#_\sigma\CH\\
      n\# X&\mapsto n\ot X\o\ot 1\ot 1\ot X\t.
    \end{align*}
    Any left coaction on $N\#_\sigma\CH$ of an $N$-bialgebroid $\mathcal G$ that commutes with the right $\CH$-coaction corresponds to a bialgebroid map $ \CL \to\mathcal G$. 
   
    The Galois map
    \[(N\#_\sigma\CH)\ou {\ol B}(N\#_\sigma\CH)\to \CL\diamond_N(N\#_\sigma\CH)\]
    is a bijection. Thus if $^{\co \CL}(N\#_\sigma\CH)=\ol B$, then $N\#_\sigma\CH$ is an $\CL$-$\CH$-biGalois extension.

    In the same sense $\CL$ is the right Ehresmann Hopf algebroid of the left $\CH$-Galois extension $\ol N\#_{\sigma\inv}\CH^{\cop}$ under the right coaction given by
    \begin{align*}
        \ol N\#_{\sigma\inv}\CH^{\cop}&\to N\#_\sigma\CH\#_\sigma N\\
        \ol n\# Y&\mapsto 1\# X\o\ot 1\ot X\t\ot n.
    \end{align*}
 \end{thm}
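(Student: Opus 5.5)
The plan is to realize $\CL$ concretely as the algebra $(P\ou{\ol B}P)^{\co\CH}$ with $P=N\#_\sigma\CH$, using the case $Q=P$ of the preceding two lemmas, and then transport structure.

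\emph{Step 1 (algebra).} Apply \Cref{calculating inverse part one}(1),(3) with $Q=P$ to get an isomorphism $\ol N\ou{\ol B}P\cong(P\ou{\ol B}P)^{\co\CH}$. Substituting $P=N\ou B\CH$ identifies the source with $N\ou B\CH\di N$, with $\ol n\ot(n'\#X)\mapsto n'\ot X\ot n$. The explicit map is
\[n\ot X\ot m\;\mapsto\;(n\# X\o)\ot\rho(X\t)m .\]
Transporting the multiplication of $P\ot P^{\op}$ along it should reproduce the stated Connes--Moscovici product. The left factor gives the crossed product term $n(X\o\la n')\sigma(X\t,Y\o)\ot X\th Y\t$. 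The right factor needs the identity $\rho(Y)\rho(X)$ rewritten through $\sigma\inv$, which follows from the formula for $\sigma\inv$ in \Cref{rem. concrete correspondence} together with $m\rho(X)=\rho(X_{[+]})(X_{[-]}\la m)$. From this we read off associativity, the unit, and the $N^e$-ring structure with $s(n)=n\ot 1\ot 1$ and $t(n)=1\ot1\ot n$; these are the images of $n\ot 1$ and $1\ot n$.

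\emph{Step 2 (bialgebroid and Hopf).} Verify directly on $N\ou B\CH\di N$ that the displayed $\Delta$ and $\varepsilon$ are well defined, land in the Takeuchi product, and are coassociative and counital. Coassociativity is inherited from $\CH$, since the middle $1\ot1$ slots do not interact. Check that $\Delta$ is multiplicative, which after expanding reduces to $\sigma\inv*\sigma=\varepsilon$ in the inner slots via \eqref{2-cocycle 12}. Check the character property of $\varepsilon$ using \eqref{2-cocycle 2} and \eqref{2-cocycle 11}. For the Hopf property, verify that the proposed $(\,)_\pm$ and $(\,)_{[\pm]}$ formulas give two-sided inverses of $\lambda$ and $\mu$. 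The key inputs are \Cref{prop. cleaving maps} (the identities $X_+\wedge(X_-\vee n)=\varepsilon(X)n$ and its mirror) and the mixed coassociativity \Cref{prop. anti left and left Galois maps}.

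\emph{Step 3 (coaction, universality, Galois).} The coaction $n\#X\mapsto n\ot X\o\ot1\ot1\ot X\t$ is visibly coassociative, an algebra map by the same computation as multiplicativity of $\Delta$, and commutes with the right $\CH$-coaction. For universality, let $\mathcal G$ coact by $p\mapsto p\mo\ot p\z$. The candidate bialgebroid map is $\CL\to\mathcal G$, $n\ot X\ot m\mapsto n\,\trho(X)\mo\,\rho'(\ldots)$. Cleanly, it is the composite $(P\ou{\ol B}P)^{\co\CH}\ni p\ot q\mapsto p\mo\,\varepsilon_{\mathcal G}$-type expression, namely $p\ot q\mapsto p\mo\,\overline{\ldots}$ built as $p\mo\ot p\z q$ followed by the coinvariant identification. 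Uniqueness follows because $\CL$ is generated by $s(N)$, $t(N)$ and the elements $1\ot X\o\ot1$-type images determined by the coaction on $1\#X$. The Galois map is shown bijective by writing an explicit inverse
\[(n\ot X\ot m)\ot p\;\mapsto\; n\trho(X\o)\ot\rho(X\t)mp .\]
Both composites are checked with $\trho\skpr\rho=\eta\varepsilon$ and $\rho\askpr\trho=\eta\ol\varepsilon$, exactly as in \Cref{calculating inverse part one}(5). The right-Ehresmann statement for $\ol N\#_{\sigma\inv}\CH^\cop$ follows symmetrically, using $(P\inv)\inv=P$ and the coopposite version of the above.

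\emph{Main obstacle.} I expect the hardest part to be Step 2, checking that the proposed translation maps truly invert $\lambda$ and $\mu$ on the balanced tensor products. This means making the $\wedge$/$\vee$ terms cancel against $\sigma,\sigma\inv$ inside the twisted product. My first attempt would avoid it by transport: show in $(P\ou{\ol B}P)^{\co\CH}$ that $\lambda\inv(p\ot q\di1)=(p\ot\yi{q\o})\ot(\er{q\o}\ot q\z)$ by the anti-translation identities of \Cref{prop. left Hopf Galois extension}. I would then only translate the result through the isomorphism of Step 1.
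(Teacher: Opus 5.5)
Your outline matches the paper's architecture for the identification and the Galois bijection: identify $(P\ou{\ol B}P)^{\co\CH}$ with $N\ou B\CH\di N$ via \Cref{calculating inverse part one}(1),(3) with $Q=P$, and get the Galois bijection from the counit $\kappa$ of part (5). However, your explicit formulas are wrong in a way that breaks the checks.

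The isomorphism of part (1) is $\ol m\ot q\mapsto q\rz\ot\rho(q\rmo)m$. For $q=n\# X$ the reverse coaction is $(n\# X_+)\ot X_-$, so $\theta(n\ot X\ot m)=n\trho(X_+)\ot\rho(X_-)m$. Your $(n\# X\o)\ot\rho(X\t)m$ is not even coinvariant. Its $\CH$-component under the diagonal coaction is $X\t X\fo$; for $P=\CH=kG$, $\rho=\trho=\id$, $X=g$ this is $g^2$. For $\theta$, the component $X_+\t X_-\t$ collapses to $1$ by \eqref{equ. inverse lamda 6} and \eqref{equ. inverse lamda 1}.

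Likewise, the inverse of the Galois map is $\kappa$, i.e.\ $(n\ot X\ot m)\ot p\mapsto n\trho(X_+)\ot\rho(X_-)mp$. With $X\o,X\t$ instead, the composite with the Galois map leaves a factor $\trho(X\t)\rho(X\th)$ that nothing collapses (again $g^2$ in the same example). The identities $\trho\skpr\rho=\eta\varepsilon$ and $\rho\askpr\trho=\eta\ol\varepsilon$ you invoke involve $X_\pm$ and $X_{[\pm]}$, not the coproduct.

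The more serious gap is the first assertion, that $\CL$ is an $N$-Hopf algebroid. You only announce a direct verification of the listed translation maps. That would also require showing that $\ell\ot\ell'\mapsto\ell_+\ot\ell_-\ell'$ is well defined on $\CL\diamond_N\CL$ and is a two-sided inverse of $\lambda$, and similarly for $\mu$. Your proposed shortcut is false. The element $(p\ot\yi{q\o})\ot(\er{q\o}\ot q\z)$ is exactly $\Delta(p\ot q)$. Its middle $N$-balancing is the $\diamond_N$ relation $t(n)\ell\ot\ell'=\ell\ot s(n)\ell'$, not the $\ot_{\ol N}$ relation. Already for $\ell=t(n)$ it gives $1\ot t(n)$, whereas $\lambda\inv(t(n)\diamond 1)=1\ot s(n)$.

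The idea you are missing is the paper's. The Galois map $P\ou{\ol B}P\to\CL\diamond_NP$ is bijective, being $\kappa\inv$. The argument of \cite[Lemma 3.21]{HS25} deriving left Hopfness from this uses faithful flatness of $P$ over $N$ only to know that $(\text{---})\ou NP$ reflects isomorphisms. That holds here because $N$ is a direct summand of $P$ as a left $N$-module via $\pi_\rho$ (\Cref{Doi-integrals}). Anti-left Hopfness follows symmetrically from the bijective right anti-Galois map of $P\inv=\ol N\#_{\sigma\inv}\CH^\cop$ over $\CL$. The translation maps are then read off by a diagram chase rather than verified by hand.

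Similarly, $\Delta$ comes from the universal property, so your Step 2 hand checks of the bialgebroid axioms, while plausible, are unnecessary. That universal property is given by your map $p\ot q\mapsto p\mo\ot p\z q$ into $(\mathcal G\diamond_NP)^{\co\CH}\cong\mathcal G$. The last isomorphism holds because the coinvariant equalizer is split by left $N$-linear maps.
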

\begin{proof}
   Put $P=N\#_\sigma\CH$. We define $\CL:=L(P,\CH)=(P\ou{\ol B}P)^{\co\CH}$ with the algebra structure coming from multiplication on the left tensor factor and the opposite multiplication on the right factor, as in the general Ehresmann Hopf algebroid construction \Cref{lem. bigalois given by ehresmann groupoid}. We have shown in \Cref{calculating inverse part one} that the counit of the adjunction in \Cref{hopf module adjunction} defines an isomorphism  $\kappa\colon\CL\diamond_NP\to P\ou{\ol B}P$ given by $(p\ot q)\ot q'\mapsto p\ot qq'$. Therefore we can proceed exactly as in \cite{HS25} to define a coaction of $\CL$ on $P$ 
   by
    \[\delta=(P\xrightarrow{p\mapsto p\ot 1}P\ou{\ol B}P\xrightarrow{\kappa^{-1}}\CL\diamond_NP).\]
   The image of $\delta$ is contained in $\CL\times_NP$ because if $\delta(p)=p'\ot p''\ot p'''$ (an element characterized by the property $p'\ot p''p'''=p\ot 1$), then $\kappa(p'\ot np''\ot p''')=p'\ot np''p'''=p\ot n=p'\ot p''p'''n=\kappa(p'\ot p''\ot p'''n)$. The corestriction of $\delta$ to said Takeuchi product is an algebra map because if also $\delta(q)=q'\ot q''\ot q'''$ then $\kappa((p'\ot p'')(q'\ot q'')\ot p'''q''')=\kappa(p'q'\ot q''p''\ot p'''q''')=p'q'\ot q''p''p'''q'''=pq'\ot q''q'''=pq\ot 1$. The same universal property of $\CL$ as used in \cite{HS25} follows from the adjunction in \Cref{hopf module adjunction}:
   \[\Hom^\CH(P,\mathcal G\diamond_N P)\cong\Hom^\CH_{-P}(P\ou{\ol B}P,\mathcal G\diamond_N P)\cong\Hom_N((P\ou{\ol B}P)^{\co\CH},\mathcal G).\]
   The universal property can then be used to define the comultiplication $\CL\to\CL\diamond_N\CL$ as in \cite{HS25}, and the Galois map $P\ou{\ol B}P\to\CL\diamond_NP$ is $\kappa^{-1}$.

   In \cite[Lemma 3.21]{HS25}, it was that bijectivity of this Galois map implies that $\CL$ is left Hopf if $P$ is left faithfully flat over $N$. But of this hypothesis the proof of \cite[Lemma 3.21]{HS25} only uses that $(\text{---})\ou NP$ \emph{reflects} isomorphisms; in our case, by \Cref{Doi-integrals}, $N\subset P$ is a direct summand as a left $N$-module so this is true.

   It remains to show that $\CL$ identifies with the Connes-Moscovici Hopf algebroid as claimed. From (1) and (3) of \Cref{calculating inverse part one}  we know that 
   \begin{align*}
       \CL\cong \ol N\ou{\ol B}P\cong \ol N\ou{\ol B}(N\#_\sigma\CH)\cong N\ot_{B} \CH\diamond_{B} N,
   \end{align*}
   and that the corresponding multiplication is as claimed. The isomorphism \eqref{ehresmann precursor} translates to 
    \[\theta:N\#_{\sigma}\CH\#_{\sigma}N\to L(N\#_{\sigma}\CH,\CH),\quad n\ot X\ot m\mapsto n\trho(X_{+})\ot\rho(X_{-})m,\]
    and the same formula describes the translation map for the left $\CL$-Galois extension $P$.
    Since \begin{multline*}\kappa(\theta(n\ot X\o\ot 1)\ot 1\ot X\t)=n\trho(X\o{}_{+})\ot\rho(X\o{}_{-})\trho(X\t)
    \\=n\trho(X_+)\ot\rho(X_{-[+]})\trho(X_{-[-]})=n\trho(X)\ot 1,\end{multline*}
    the coaction is as claimed.
   We note that the inverse of $\theta$ is given by
    \[\phi:L(N\#_{\sigma}\CH,\CH)\to N\#_{\sigma}\CH\#_{\sigma}N,\quad n\ot X_{+}\ot m\ot X_{-}\mapsto n\ot X\o\ot X\t\wedge m.\]

   To find the explicit form of the left translation map we recall from the proof of \cite[Lem.~3.21]{HS25} that 
        \[
\begin{tikzcd}
  &P\ot_{\BB} P\ot_{\BB}P \arrow[d, "P\ot \lcan"] \arrow[r, "\lcan\ot P"] & \cL\diamond_N P \ot_{\BB}P \arrow[dd, "\cL\ot \lcan"] &\\
  &P\ot_{\BB} (\cL \diamond_N P) \arrow[d, " \lcan_{1,3}"]\quad&\qquad&\\
   & (\cL\ot_{\ol N} \cL )\diamond_NP   \arrow[r, "\lambda\ot P"] & (\cL\diamond_N \cL) \diamond_NP &
\end{tikzcd}
\]
commutes. Thus we can compute $\lambda\inv$ by chasing $n\ot X\ot m\ot 1_\CL\ot 1_P$ from the bottom right corner of the diagram to the bottom left corner. 

To prepare, we note first that in $N\#_\sigma \CH$
\begin{multline*}
    X\o\vee m\ot X\t=\rho(X\o{}_{+})m\rho(X\o{}_{-})\trho(X\t)=\rho(X_+)m\rho(X_{-[+]})\trho(X_{-[-]})=\rho(X)m
\end{multline*}
and therefore
\[\delta(\rho(X)m)=\delta(X\o\vee m\ot X\t)=X\o\vee m\ot X\t\ot 1\ot 1\ot X\th.\]
Now
\begin{align*}
     \lcan_{1,3}&(P\ot\lcan)(\lcan\inv\ot P)(\CL\ot\lcan\inv)(n\ot X\ot m\ot 1_\CL\ot 1_P)
    \\
    =&\lcan_{1,3}(P\ot\lcan)(\lcan\inv\ot P)(n\ot X\ot m\ot 1_P\ot 1_P)
    \\
    =&\lcan_{1,3}(P\ot\lcan)(n\trho(X_+)\ot\rho(X_-)m\ot 1_P)
    \\
    =&\lcan_{1,3}((n\trho(X_+))\ot ( X_{-}\o\vee m\ot X_{-}\t\ot 1)\ot (1\ot X_{-}\th))
    \\
    =&(n\ot X_+\o\ot 1)\ot (X_-\o\vee m\ot X_-\t\ot 1)\ot (\trho(X_+\t)\trho(X_-\th))
    \\
    =&(n\ot X\o\ot 1)\ot (X\t{}_{-}\o\vee m\ot X\t{}_{-}\t\ot 1)\ot \trho(X\t{}_{+})\trho(X\t{}_{-}\th)
     \\
     =&(n\ot X\o\ot 1)\ot (X\t{}_{-}\o\vee m\ot X\t{}_{-}\t\ot 1)\ot \trho(X\t{}_{++})\trho(X\t{}_{+-})
      \\
      =&(n\ot X\o\ot 1)\ot (X\t{}_{-}\o\vee m\ot X\t{}_{-}\t\ot 1)\ot X\t{}_{+}\wedge 1
       \\
       =&(n\ot X\o\ot X\t{}_{+}\wedge 1)\ot (X\t{}_{-}\o\vee m\ot X\t{}_{-}\t\ot 1)\ot 1_P
       \\
       =&(n\ot X{}_{+}\o\ot X{}_{+}\t\wedge 1)\ot (X{}_{-}\o\vee m\ot X{}_{-}\t\ot 1)\ot 1_P.
\end{align*}

    By symmetric reasoning, $P\inv=\ol N\#_{\sigma\inv}\CH^{\cop}$ is a right comodule algebra over the same Connes-Moscovici algebroid, for which the right anti-Galois canonical map is bijective; therefore $\cL$ is also anti-left Hopf algebroid. We omit the analogous proof that the translation map in this case is also as stated.
    \end{proof}
    \begin{rem}
    Under suitable flatness conditions, 
    the monoidal functor from \Cref{monoidal functor} factors over an equivalence $\LComod\CH\to\LComod{L(P,\CH)}$, see \Cref{bigalois imply equivalence}. In our present situation it is not hard to check that the functor factors over $\LComod{L(P,\CH)}$ though it may not give an equivalence.
    \end{rem}

\subsection{Drinfeld twist of Hopf algebroids}\label{sec. Drinfeld twist of Hopf algebroids}

In this section, we will study the Drinfeld twist theory of Hopf Galois extensions. In \Cref{defi. 2 cocycle and twisted module algebra}, consider the special case that $N=B$, then we can construct a Drinfeld cotwist bialgebroid, which generalizes the 2-cocycle cotwist in \cite{Boehm, HM22, HM23} (where 2-cocycles factor through $\CH\ot_{B^e}\CH$).

\begin{LD}\label{def. 2-cocycle on L}
    Let $\CH$ be a left $B$-bialgebroid. The following data are equivalent:
    \begin{enumerate}
        \item A cocycle $(\Gamma,\la)$ over $N=B$ in the sense of \Cref{defi. 2 cocycle and twisted module algebra}
        \item A map $\Gamma\in {}_{B^{e}-}\Hom(\CH\otimes_{\BB}\CH, B)$ such that 
        \begin{align}\Gamma(X, \Gamma(\one{Y}, \one{Z})\two{Y}\two{Z})&=\Gamma(\Gamma(\one{X}, \one{Y})\two{X}\two{Y}, Z)\label{i}\\
        \Gamma(1_{\CH}, X)&=\varepsilon(X)=\Gamma(X, 1_{\CH})\label{ii}
    \end{align} 
for all $X, Y, Z\in \CH$ and $b\in B$.
    \end{enumerate}
 $\Gamma$ is invertible in the sense of \Cref{defi. 2 cocycle and twisted module algebra} iff  there is a map $\Gamma^{-1}\in {}_{B^{e}-}\Hom(\CH\otimes_{B}\CH, B)$, such that
 \begin{align}
         \Gamma^{-1}(X, \overline{b}Y)&=\Gamma^{-1}(X\o, Y)\Gamma^{-1}(X\t, \overline{b}),\label{iii}\\
        \Gamma(X\o,Y\o)\Gamma^{-1}(X\t, Y\t)&=\Gamma(X, \varepsilon(Y)), \label{iv}\\ 
        \Gamma^{-1}(X\o,Y\o)\Gamma(X\t, Y\t)&=\varepsilon(XY).\label{v} 
    \end{align} 
    We will call such a map $\Gamma$ an (invertible) 2-cocycle on $\CH$.
\end{LD}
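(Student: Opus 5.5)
The plan is to show first that, when $N=B$, the measuring $\la$ is forced, and then to translate each cocycle axiom into the stated identities. By \eqref{2-cocycle 1} and \eqref{2-cocycle 3}, the map $X\la b$ is determined by its values on elements $b\in B$. Since $N=B$, \eqref{2-cocycle 2} and \eqref{2-cocycle 3} should force $X\la b=\varepsilon(Xb)$. Indeed, using \eqref{2-cocycle 6} with $\sigma=\Gamma$ and $Y=1$, together with \eqref{2-cocycle 5}, I get
\[X\la b=(X\o\la b)\varepsilon(X\t)=(X\o\la b)\Gamma(X\t,1)=\Gamma(X,b).\]
So the action is $X\la b=\Gamma(X,b)$, which need not equal $\varepsilon(Xb)$ since $\Gamma$ is only $\ot_{\BB}$-balanced. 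The data $(\Gamma,\la)$ are therefore determined by $\Gamma$ alone, and the work is to translate the axioms.

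For (1)$\Rightarrow$(2), condition \eqref{2-cocycle 4} gives left $B^e$-linearity and \eqref{2-cocycle 5} gives \eqref{ii}. For \eqref{i}, I rewrite both sides of \eqref{2-cocycle 7} using \eqref{2-cocycle 6}.
\begin{itemize}
\item On the left, $(X\o\la b)\Gamma(X\t,W)=\Gamma(X,bW)$ with $b=\Gamma(Y\o,Z\o)$ turns the left side into $\Gamma(X,\Gamma(Y\o,Z\o)Y\t Z\t)$.
\item On the right, $B^e$-linearity of $\Gamma$ in the first slot gives $\Gamma(X\o,Y\o)\Gamma(X\t Y\t,Z)=\Gamma(\Gamma(X\o,Y\o)X\t Y\t,Z)$.
\end{itemize}
Both rewritings need care with the Takeuchi balancing so that each expression is well defined.

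For (2)$\Rightarrow$(1), I define $X\la b:=\Gamma(X,b)$, identifying $b$ with $s(b)\in\CH$. I then verify the axioms in turn:
\begin{itemize}
\item \eqref{2-cocycle 1} follows from $B^e$-linearity together with $\Gamma(X,\overline{b'})=\Gamma(X\overline{b'},1)=\Gamma(X,1)b'$-type manipulations, using the $\ot_{\BB}$ balancing.
\item \eqref{2-cocycle 2} and $1\la n=n$ follow from \eqref{ii}.
\item \eqref{2-cocycle 6} follows from \eqref{i} with $Y=b$, $Z=W$, using \eqref{ii}.
\item \eqref{2-cocycle 3}, multiplicativity, follows from \eqref{i} with $Y=m$, $Z=n$ both in $B$.
\item \eqref{2-cocycle 8} also follows from \eqref{i}: specialize $Z=n\in B$ and use the previous rewriting backwards.
\item \eqref{2-cocycle 7} is \eqref{i} read backwards.
\end{itemize}

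For invertibility, I would match \eqref{2-cocycle 9}–\eqref{2-cocycle 12} with \eqref{iii}–\eqref{v}:
\begin{itemize}
\item \eqref{2-cocycle 10} with $X\t\la b$ interpreted as $\Gamma(X\t,b)$ becomes \eqref{iii}. Here the element written $\overline{b}$ in \eqref{iii} must be read carefully: \eqref{iii} writes $\Gamma^{-1}(X\t,\overline b)$, so I must check that this agrees with $\Gamma(X\t,b)$. This requires applying \eqref{iv} and \eqref{v} to scalars, and that is where I expect friction.
\item \eqref{2-cocycle 11} has right side $X\la(Y\la1)=\Gamma(X,\varepsilon(Y))$, which gives \eqref{iv}.
\item \eqref{2-cocycle 12} has right side $(XY)\la1=\varepsilon(XY)$, which gives \eqref{v}.
\end{itemize}

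The main obstacle is these bookkeeping identities relating $\la$ on $B$ with $\Gamma$ and $\Gamma^{-1}$ evaluated on source versus target elements, and making every Sweedler expression well defined over $\times_B$. I would handle them first as a small preliminary computation.
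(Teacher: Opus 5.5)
Your proposal is correct and follows the paper's proof. You set $X\la b=\Gamma(X,b)$, which is forced by \eqref{2-cocycle 6} with $Y=1$. You obtain \eqref{2-cocycle 3}, \eqref{2-cocycle 6} and \eqref{2-cocycle 8} by specializing \eqref{i} to arguments from $B$, and you reduce \eqref{2-cocycle 10}$\Leftrightarrow$\eqref{iii} to $\Gamma^{-1}(X,\ol b)=\Gamma(X,b)$; this is just \eqref{iv} (resp.\ \eqref{2-cocycle 11}) at $Y=\ol b$, using $\Delta(\ol b)=1\ot\ol b$ and $\varepsilon(\ol b)=b$.

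Two slips should be dropped, though neither is load-bearing. Your opening guess $X\la b=\varepsilon(Xb)$ is false in general, as you yourself then note. The aside $\Gamma(X\ol{b'},1)=\Gamma(X,1)b'$ is also false, since the left side equals $\varepsilon(X\ol{b'})=\varepsilon(Xb')$. In any case \eqref{2-cocycle 1} is simply left $B^e$-linearity of $\Gamma$ in its first argument.
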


\begin{proof}
   Indeed, given a 2-cocycle $\Gamma$ on $\CH$, if we define $X\la b=\Gamma(X, b)$, it is not hard to check \eqref{2-cocycle 1} and \eqref{2-cocycle 2}. For \eqref{2-cocycle 3}, we have
    \begin{align*}
        X\la(mn)=&\Gamma(X, mn)=\Gamma(X,\Gamma(m,n))=\Gamma(\Gamma(X\o, m)X\t, n)=\Gamma(X\o, m)\Gamma(X\t, n)\\
        =&(X\o\la m)\, (X\t\la n).
    \end{align*}
    Also, it is not hard to check (\ref{2-cocycle 4}) and (\ref{2-cocycle 5}). For (\ref{2-cocycle 6}), we have
    \begin{align*}
        (X\o\la b)\Gamma(X\t, Y)=&\Gamma(X\o, b)\Gamma(X\t, Y)=\Gamma(X, \Gamma(b, Y\o)Y\t)=\Gamma(X, bY).
    \end{align*}
    With the help of (\ref{2-cocycle 6}), it is not hard to check (\ref{2-cocycle 7}) and (\ref{2-cocycle 8}). In addition, if $\Gamma$ is an invertible 2-cocycle on $\CH$, we can see (\ref{2-cocycle 9}), (\ref{2-cocycle 11}) and (\ref{2-cocycle 12}) can also be recovered by the axioms on $\Gamma^{-1}$.
    Moreover, we have $\Gamma^{-1}(X, \overline{b})=\Gamma(X, b)$. Indeed,
    \begin{align*}
        \Gamma^{-1}(X, \overline{b})=\Gamma(X\o, 1)\Gamma^{-1}(X\t, \overline{b})=\Gamma(X, b),
    \end{align*}
    where the 2nd step uses \eqref{iv}. As a result, (\ref{2-cocycle 10})  can be recovered by \eqref{iii}. By (\ref{equ. right cocycle}), we can see $\Gamma^{-1}$ safisfies 
    \[\Gamma^{-1}(X, \overline{\Gamma^{-1}(Y\t, Z\t)}Y\o\, Z\o)=\Gamma^{-1}(\overline{\Gamma^{-1}(X\t, Y\t)}X\o\, Y\o, Z).\]
    
    Conversely, given a (invertible) 2-cocycle of $\CH$ over $B$ as in Definition~\ref{defi. 2 cocycle and twisted module algebra}, we can see the action satisfies $X\la b=\sigma(X, b)=\sigma^{-1}(X,\overline{b})$ by (\ref{2-cocycle 6}) and (\ref{2-cocycle 10}) of Definition~\ref{defi. 2 cocycle and twisted module algebra}. Therefore, all the axioms of Definition~\ref{def. 2-cocycle on L} can be checked. So we can see the two definition are equivalent in the case that $N=B$.   \end{proof}
  
\begin{rem}  
    In \cite{Boehm,HM22,HM23} the additional condition is imposed that the cocycle $\Gamma$ factors over $\CH\ou{B^e}\CH$.  As it turns out as a result of our general theory of cleft extensions and Connes-Moscovici-Ehresmann Hopf algebroids, the extra condition is simply not needed to arrive at essentially the same conclusions.  
    
    By Remark \ref{rem. bilinear 2-cocycle of L over N} the addtional condition is equivalent to "triviality" of the action in the sense that $X\la b=\varepsilon(Xb)$, so that Definition \ref{def. 2-cocycle on L} with this triviality added is indeed the one given in \cite{Boehm, HM22, HM23}.
\end{rem}

Our definition of a cocycle on $\CH$ is a special case of the definition of a cocycle in \Cref{defi. 2 cocycle and twisted module algebra}. Thus, we also get a cleft extension $_\Gamma\CH:=B\#_\Gamma \CH$, which identifies with $\CH$ as a right comodule algebra, with the multiplication given by 
\[X\cdot Y=\Gamma(X\o,Y\o)X\t Y\t.\]
The left Ehresmann $\CH^\Gamma:=L(\CH_\Gamma,\CH)$ can be identified with $\CH$ with the new multiplication
\begin{align}\label{twistprod}
    X\cdot_{\Gamma} Y:=\Gamma(\one{X}, \one{Y})\overline{\Gamma^{-1}(\three{X}, \three{Y})}\two{X}\two{Y}.
\end{align}
In particular, ${}_{\Gamma}\CH$ is a $\CH^{\Gamma}$-$\CH$-biGalois extension.

\begin{cor}\label{thm. one side twist is cleft extension}
  Let $\CH$ be a $B$-Hopf algebroid. Then $B\subseteq P$ is an anti-cleft extension over $\CH$ if and only if $P\cong {}_{\Gamma}\CH$ for some invertible 2-cocycle $\Gamma$ on $\CH$. 
\end{cor}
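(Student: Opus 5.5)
The plan is to reduce the statement to \Cref{classical char of cleft extensions} in the special case $N=B$, and then to use Lemma and Definition~\ref{def. 2-cocycle on L} to translate between cocycles over $B$ and 2-cocycles on $\CH$.

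For the forward direction, suppose $B\subseteq P$ is anti-cleft. Here ``$B\subseteq P$'' means that the coinvariants are $N=P^{\co\CH}=B$. By \Cref{classical char of cleft extensions}, (1)$\Rightarrow$(3), we get $P\cong B\#_\sigma\CH$ as right $\CH$-comodule algebras for an invertible cocycle $(\la,\sigma)$ of $\CH$ over $N=B$. Lemma and Definition~\ref{def. 2-cocycle on L} identifies such data with an invertible 2-cocycle $\Gamma=\sigma$ on $\CH$, with $X\la b=\Gamma(X,b)$.

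It then remains to identify $B\#_\Gamma\CH=B\ou B\CH\cong\CH$ with ${}_\Gamma\CH$. The crossed product multiplication is
\[(1\ot X)(1\ot Y)=(X\o\la 1)\Gamma(X\t,Y\o)\ot X\th Y\t=\Gamma(X\o,Y\o)X\t Y\t,\]
using \eqref{2-cocycle 2} and the counit axiom. One should also check that the general product $(b\ot X)(b'\ot Y)$ reduces to the same formula on $\CH$ under $b\ot X\mapsto bX$. This uses \eqref{2-cocycle 6}, which gives $(X\o\la b')\Gamma(X\t,Y)=\Gamma(X,b'Y)$. Hence the two algebra structures agree, and the comodule structures agree tautologically.

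For the converse, let $\Gamma$ be an invertible 2-cocycle on $\CH$. Lemma and Definition~\ref{def. 2-cocycle on L} produces an invertible cocycle $(\Gamma,\la)$ of $\CH$ over $B$. Then ${}_\Gamma\CH=B\#_\Gamma\CH$ is a crossed product with invertible cocycle. Its coinvariants are $B$, since the splitting $\pi(b\ot X)=b\varepsilon(X)$ of the normal basis shows $(B\ou B\CH)^{\co\CH}=B$. So \Cref{classical char of cleft extensions}, (3)$\Rightarrow$(1), shows it is anti-cleft over its coinvariants $B$, and the same then holds for any $P\cong{}_\Gamma\CH$. Explicitly, a cleaving is given by $\trho=\id$ together with the $\rho$ of Remark~\ref{rem. concrete correspondence}.

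I expect the main obstacle to be bookkeeping rather than substance. One point is the well-definedness of $\Gamma$ on $\CH\ou{\BB}\CH$ versus $\sigma$. The other is verifying the identification $B\ou B\CH\cong\CH$ compatibly with the twisted product when the $B$-factor is nontrivial. The key identity for the latter is \eqref{2-cocycle 6}.
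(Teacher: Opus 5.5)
Your proposal is correct and follows the paper's route. The paper also obtains the corollary by specializing \Cref{classical char of cleft extensions} to $N=B$, translating cocycles over $B$ into invertible 2-cocycles on $\CH$ via Lemma and Definition~\ref{def. 2-cocycle on L}, and identifying $B\#_\Gamma\CH$ with ${}_\Gamma\CH$ as you do. One small correction on wording: the splitting $\pi$ alone does not show that the coinvariants of $B\ou B\CH$ are exactly $B$. That follows by applying $\id\ot\varepsilon\ot\id$ to the coinvariance condition.
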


\begin{lem}
    Let $\cL$ be a left bialgebroid over $B$, and $\Gamma$ be an invertible  2-cocycle on $\cL$. Then  ${}^{\cL}\mathcal{M}\cong{}^{\cL^{\Gamma}}\mathcal{M}$ as monoidal categories.
\end{lem}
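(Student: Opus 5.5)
We construct the equivalence directly, without going through biGalois theory. That route would need faithful flatness we do not assume. The guiding idea is the classical one for Drinfeld cotwists: a left $\cL$-comodule is made into a left $\cL^\Gamma$-comodule without changing its underlying data. Since $\cL^\Gamma$ is $\cL$ as a $B$-coring (same source, target, comultiplication and counit; only the multiplication \eqref{twistprod} changes), we take the functor $F\colon{}^{\cL}\mathcal M\to{}^{\cL^\Gamma}\mathcal M$ to be the identity on objects and morphisms.

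\textbf{Step 1: the identity functor is well defined.} Left $\cL$-comodules are, by the definition recalled in Section~\ref{sec2}, comodules over the underlying $B$-coring. The right $B$-module structure is $pb=\varepsilon(p\mo b)p\z$. Here $p\mo b$ is a product in the algebra, so it is computed differently in $\cL$ and $\cL^\Gamma$. We therefore compute $X\cdot_\Gamma b$ for $b\in B\subset\cL$ (via the source map) using \eqref{twistprod}. Since $\Delta(b)=b\ot 1$, we get
\[
X\cdot_\Gamma b=\Gamma(X\o,b)\,\overline{\Gamma^{-1}(X\th,1)}\,X\t=\Gamma(X\o,b)X\t=(X\o\la b)X\t .
\]
Consequently the induced right $B$-action on $V\in{}^{\cL^\Gamma}\mathcal M$ is $v\cdot_\Gamma b=\varepsilon(v\mt\la b)\ldots$, which is in general \emph{different} from the original one. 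This is expected: the monoidal structure must be twisted, just as $P\Box^\cL V\cong N\ou B V$ in \Cref{monoidal functor}(1) carries a twisted right $N$-action $(n\ot v)m=n(v\mo\la m)\ot v\z$. Colinearity of morphisms is unaffected, so $F$ is an isomorphism of categories, with the identity as inverse.

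\textbf{Step 2: monoidal structure.} The tensor product in ${}^{\cL^\Gamma}\mathcal M$ is $V\ou B W$ formed with the twisted right action on $V$. We define
\[
\xi_{V,W}\colon F(V)\ou B F(W)\to F(V\ou B W),\qquad v\ot w\mapsto \Gamma(v\mo,w\mo)\,v\z\ot w\z,
\]
with candidate inverse $v\ot w\mapsto\Gamma^{-1}(v\mo,w\mo)v\z\ot w\z$. This is exactly the $N=B$ case of the maps $\xi_{VW},\zeta_{VW}$ in the proof of \Cref{monoidal functor}(3). The argument there shows that $\xi$ is well defined: the balancing against the twisted action on the source uses \eqref{2-cocycle 6}, and on the target uses \eqref{iii}, with $\Gamma^{-1}(X,\ol b)=\Gamma(X,b)$. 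The same argument shows that $\xi$ is bijective, using \eqref{iv} and \eqref{v}.

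It remains to check three things:
\begin{enumerate}
\item $\xi$ is $\cL^\Gamma$-colinear. Written out, this says $\Gamma(v\mt,w\mt)\,v\mo w\mo\ot v\z\ot w\z$ equals $(v\mt\cdot_\Gamma w\mt)\,\Gamma(v\mo,w\mo)\ldots$. Expanding \eqref{twistprod}, the factor $\overline{\Gamma^{-1}}$ cancels against $\Gamma$ by \eqref{v}, and the identity follows.
\item $\xi$ is compatible with the unit object $B$, by \eqref{ii}.
\item The hexagon/associativity diagram commutes, which reduces to the cocycle identity \eqref{i}.
\end{enumerate}

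\textbf{Main obstacle.} The hardest part is the bookkeeping of well-definedness over the various balanced tensor products and Takeuchi products. In particular, colinearity of $\xi$ must be checked inside $\cL^\Gamma\times_B(\ldots)$ with its twisted module structures, and no flatness is available to compare $\alpha,\alpha'$-type iterated products. To handle this, I would first verify all identities on the level of $\cL\ot V\ot W$-valued expressions defined via the maps of \Cref{monoidal functor}, and only then descend to the balanced products. Alternatively, I would identify $V\mapsto{}_\Gamma\cL\Box^{\cL}V$ with $F$ via \Cref{monoidal functor}(1), and transport the already-established monoidal structure of that functor, whose values naturally carry the $\cL^\Gamma$-coaction from \Cref{Ehresmann in the cleft case}.
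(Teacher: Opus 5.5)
Your proposal is essentially the paper's own argument: the identity functor on comodules (the coring structure of $\cL$ is unchanged), with monoidal structure $\xi_{V,W}(v\ot w)=\Gamma(v\mo,w\mo)v\z\ot w\z$ and coherence coming from the cocycle identity, which is the $N=B$ case of \Cref{monoidal functor}. Your observation that the induced right $B$-action becomes $v\cdot_\Gamma b=(v\mo\la b)v\z$ (the formula you garbled as $\varepsilon(v\mt\la b)\ldots$) is correct, and it is more careful than the paper's phrase ``identity on the underlying $B$-bimodule'', which is literally true only when $X\la b=\varepsilon(Xb)$.
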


 More precisely,   given $V$ and $W$ as left $\cL$-comodule, then $V\ot_{B}{}W$ is also left $\cL$-comodule with diagonal coaction and $B$-bimodule structure given by $b(v\ot w)b'=bv\ot wb'$ for any $b,b'\in B$ and $v\ot w\in V\ot_{B}{}W$. The monoidal functor is the identity on the underlying $B$-bimodule, with the corresponding natural transformation given by
    \[\xi_{V, W}:v\ot_{B}{}w\mapsto \Gamma(v\mo, w\mo)v\z\ot_{B}{}w\z,\]
    for any $v\ot w\in V\ot_{B}{}W$. The coherence condition corresponds to the 2-cocycle condition on $\Gamma$. Note that this monoidal functor structure was already discussed in a more general setting in \Cref{monoidal functor}. Here the functor between comodule categories is in fact an equivalence without any further assumptions, because on comodules it is simply the identity, given that the coring structure of $\CH$ does not change.

\begin{cor}\label{lem. two side cleft extensions}
   Let $\CH$ be a $B$-Hopf algebroid and $\Gamma$ be an invertible 2-cocycle on $\CH$. Then ${}_{\Gamma}\CH$ is a $\CH^\Gamma$-$\CH$-biGalois extension which is both left and right cleft. $\Gamma\inv$ is a cocycle on $\CH^\Gamma$ with $(\CH^\Gamma)^{\Gamma\inv}=\CH$.
\end{cor}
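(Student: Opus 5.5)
The corollary has three claims, and each can be taken from the general theory specialised to $N=B$. The first is that ${}_\Gamma\CH$ is a $\CH^\Gamma$-$\CH$-biGalois extension which is right anti-cleft. By \Cref{thm. one side twist is cleft extension}, ${}_\Gamma\CH=B\#_\Gamma\CH$ is a crossed product with invertible cocycle, so \Cref{classical char of cleft extensions} makes it anti-cleft and anti-right $\CH$-Galois. \Cref{Ehresmann in the cleft case} then says that $\CH^\Gamma=L({}_\Gamma\CH,\CH)$ coacts on the left and the left Galois map is bijective. The coinvariants ${}^{\co\CH^\Gamma}({}_\Gamma\CH)$ equal $\ol B$: in \eqref{Ehresmann in the cleft case} the coaction with $N=B$ is $X\mapsto X\o\ot 1\ot X\t$ after the identification $\CH^\Gamma\cong B\ot_B\CH\di_B B\cong\CH$. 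An element is coinvariant iff $X\o\ot X\t=\ol{b}\,\ot 1$-type, and applying $\varepsilon$ to the first factor gives $X\in\ol B$. So the theorem's criterion yields the biGalois property.

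For left cleftness I would not build a new cleaving map. Instead I would use the left-hand half of \Cref{Ehresmann in the cleft case} together with \Cref{calculating inverse part one}. The point is to view ${}_\Gamma\CH$ as $\ol N\#_{\alpha}(\CH^\Gamma)^{\cop}$ with $N=B$. Concretely, the left coaction $X\mapsto X\o\ot X\t$, with $\CH^\Gamma$ identified with $\CH$, has trivial left coinvariants $\ol B$. It admits the normal basis $\CH\cong \CH^\Gamma\ou{\ol B}\ol B$ given by the identity, which is left colinear and right $\ol B$-linear. By \Cref{Lemma. crossed product}, in its coopposite version, ${}_\Gamma\CH$ is therefore a left crossed product over $\CH^{\Gamma\,\cop}$. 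Its cocycle $\alpha$ is read off as in the remark recovering $\sigma$ via $\pi$: $\alpha(X,Y)=\varepsilon(X\cdot Y)$ computed in ${}_\Gamma\CH$, which equals $\Gamma(X,Y)$ viewed with the $\CH^\Gamma$-structure.

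It remains to show this cocycle is invertible on $\CH^\Gamma$. I would use \Cref{monoidal functor}(3): invertibility is equivalent to the anti-Galois property on the appropriate side, and that is exactly the bijectivity of the left Galois map from the first step. By \Cref{classical char of cleft extensions}, in its coopposite form, the left extension is then cleft.

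The last claim is that $\Gamma^{-1}$ is a cocycle on $\CH^\Gamma$ with $(\CH^\Gamma)^{\Gamma^{-1}}=\CH$. I expect this to be the main obstacle, because the equations must be checked in the twisted product \eqref{twistprod}, whose coproduct is the untwisted one. The cleanest route is to argue that the cocycle found above on $\CH^\Gamma$ is $\Gamma^{-1}$, suitably interpreted as a $B^e$-linear map on $\CH^\Gamma\ot_{\BB}\CH^\Gamma$. The reason is that the right Ehresmann algebroid of the left extension recovers $\CH$ by the symmetric statement in \Cref{Ehresmann in the cleft case}.

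As a fallback I would verify directly that \eqref{i}, \eqref{ii} and \eqref{iii}--\eqref{v} hold for $\Gamma^{-1}$ on $\CH^\Gamma$. This reduces, via \eqref{v} and \eqref{iv}, to the right cocycle identity \eqref{equ. right cocycle}, which was shown for $\Gamma^{-1}$ in the proof of \Cref{def. 2-cocycle on L}. Then I would compute $X\cdot_\Gamma Y$ twisted again by $\Gamma^{-1}$ using \eqref{twistprod}, namely $\Gamma^{-1}(X\o,Y\o)\,\ol{\Gamma(X\th,Y\th)}\,(X\t\cdot_\Gamma Y\t)$. Expanding and applying \eqref{v} to the outer pairs collapses it to $XY$, giving $(\CH^\Gamma)^{\Gamma^{-1}}=\CH$. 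Throughout, well-definedness across the balanced tensor products needs care, since both $\Gamma$ and $\Gamma^{-1}$ factor through different tensor products.
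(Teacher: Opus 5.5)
Your first two assertions follow the paper's own proof, with more detail filled in. The biGalois property comes from \Cref{Ehresmann in the cleft case} once ${}^{\co\CH^\Gamma}({}_\Gamma\CH)=\ol B$. Left cleftness comes from the obvious left normal basis together with the left Galois property, via the coopposite \Cref{classical char of cleft extensions}. Your identification $\alpha(X,Y)=\varepsilon(X\cdot Y)=\Gamma(X,Y)$ is exactly the paper's assertion ${}_\Gamma\CH\cong\ol B\#_\Gamma(\CH^\Gamma)^\cop$.

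There are two slips in that part.
\begin{itemize}
\item The coinvariance condition is $X\o\ot X\t=1\ot X$. Applying $\varepsilon$ to the second leg gives $X=\ol{\varepsilon(X)}$. Applying it to the first leg, as you wrote, gives only a tautology.
\item The normal basis that the coopposite \Cref{Lemma. crossed product} needs is the left $\ol B$-linear one, $\ol B\ou{\ol B}\CH^\Gamma\cong{}_\Gamma\CH$. The identity works here because left multiplication by $s(b)$ and $t(b)$ is the same in $\CH$, $\CH^\Gamma$ and ${}_\Gamma\CH$. The identity is not right $\ol B$-linear when $\la$ is nontrivial: in $\CH^\Gamma$ one has $X\cdot_\Gamma\ol b=\ol{\Gamma\inv(X\t,\ol b)}\,X\o$, whereas $X\cdot\ol b=X\ol b$ in ${}_\Gamma\CH$.
\end{itemize}

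For the last assertion you take a different route from the paper. The paper stops at ${}_\Gamma\CH\cong\ol B\#_\Gamma(\CH^\Gamma)^\cop$ and relies on two structural facts, the analogue of $(P\inv)\inv=P$:
\begin{itemize}
\item passing between left and right crossed products inverts the cocycle (\Cref{left cross product in terms of right cross product}), so $\Gamma\inv$ is the corresponding cocycle on $\CH^\Gamma$;
\item the coopposite half of \Cref{Ehresmann in the cleft case} exhibits $(\CH^\Gamma)^{\Gamma\inv}$ as the right Ehresmann algebroid of ${}_\Gamma\CH$.
\end{itemize}

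Your ``cleanest route'' is not yet an argument, for two reasons. First, the cocycle you found is $\Gamma$, and $\Gamma\inv$ appears only through that inversion. Second, \Cref{Ehresmann in the cleft case} yields $(\CH^\Gamma)^{\Gamma\inv}$ as the right Ehresmann algebroid, not $\CH$, so saying it ``recovers $\CH$'' presupposes the conclusion.

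Your fallback, however, is a complete and more elementary proof.
\begin{itemize}
\item Condition \eqref{i} for $\Gamma\inv$ on $\CH^\Gamma$ reduces by \eqref{v} to the identity $\Gamma\inv(X,\ol{\Gamma\inv(Y\t,Z\t)}Y\o Z\o)=\Gamma\inv(\ol{\Gamma\inv(X\t,Y\t)}X\o Y\o,Z)$, proved in \Cref{def. 2-cocycle on L}.
\item Conditions \eqref{ii}--\eqref{v}, with $\Gamma$ as the inverse, are direct checks.
\item In the double twist $\Gamma\inv(X\o,Y\o)\ol{\Gamma(X\th,Y\th)}\,X\t\cdot_\Gamma Y\t$, one application of \eqref{v} to the $s$-factors and one to the $t$-factors leaves $XY$.
\end{itemize}
The well-definedness you flag does hold. $\Gamma\inv$ descends to $\CH^\Gamma\ou{\BB}\CH^\Gamma$ by the formula for $X\cdot_\Gamma\ol b$ above together with \eqref{iii}. $\Gamma$ descends to $\CH^\Gamma\ou B\CH^\Gamma$ because $X\cdot_\Gamma b=(X\o\la b)X\t$, together with \eqref{2-cocycle 6}.

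The paper's route is shorter and explains the statement conceptually. Yours is self-contained and checks exactly the equations the paper leaves implicit.
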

\begin{proof}
    We have proved that ${{}_{\Gamma}\CH}$ is biGalois in \Cref{Ehresmann in the cleft case} (there is no issue with the coinvariant subalgebra here). Obviously it has normal basis as a left comodule algebra as well, so it is left cleft. It is not hard to check that as a crossed product left comodule algebra over $\CH^\Gamma$, the algebra $_\Gamma\CH$ is defined by $\Gamma$ considered as a cocyle on $(\CH^\Gamma)^\cop$.  In other words, if we consider $_\Gamma\CH\cong B\#_{\Gamma}\,\CH$ as a left cleft $\CH^{\Gamma}$ extension, $_\Gamma\CH\cong \BB\#_{\Gamma}\,(\CH^{\Gamma})^{\cop}$.
\end{proof}

\subsection{Twisting CM-bialgebroids}
Under suitable flatness conditions on all Hopf algebroids and extensions involved, it was shown in \cite{HS25} that bi-Galois extensions form a groupoid. In particular, for two $\CH$-anti-Galois extensions $N\subset P$ and $A\subset Q$ we can simply form $Q\Box^\CH P\inv$  which is a $L(Q,\CH)$-$L(P,\CH)$-bi-Galois extension. Most of this works also without any flatness assumptions in our situation, using the explicit nature of cleft extensions instead.

\newcommand\laa{\underline\triangleright}
\begin{thm}\label{cm-bigalois}
    Let $(\la,\sigma)$ and $(\laa,\kappa)$ be two cocyles of $\CH$ over $N$ and $A$, respectively. Then
    $A\#_{\kappa}\CH\#_{\sigma}N:=A\ot_{B} \CH\di N$
    with multiplication
     \begin{align}\label{equ. twist on two side products1}
         (a\ot X\ot m)&(b\ot Y\ot n)\\&\notag=a(X\o\laa b)\kappa(X_{\scriptscriptstyle{(2)}}, Y\o)\ot X_{\scriptscriptstyle{(3)}}Y_{\scriptscriptstyle{(2)}}\ot \sigma^{-1}(X_{\scriptscriptstyle{(4)}}, Y_{\scriptscriptstyle{(3)}})(X_{\scriptscriptstyle{(5)}}\la n)m,
      \end{align}
      is a $A\#_\kappa\CH\#_\kappa A$---$N\#_\sigma\CH\#_\sigma N$-bicomodule algebra with left and right coactions each given by
      \begin{align*}
          a\ot X\ot n&\mapsto a\ot X\o\ot 1\ot 1\ot X\t\ot n
      \end{align*}
      (with different targets of the two maps).
      $A$ maps to the right coinvariants of this algebra, and the right anti-Galois map
      \[(A\#_{\kappa}\CH\#_{\sigma}N)\ou A(A\#_{\kappa}\CH\#_{\sigma}N)\to (A\#_{\kappa}\CH\#_{\sigma}N)\diamond_N (N\#_\sigma\CH\#_\sigma N)\]
      is a bijection. Similarly $N$ maps to the left coinvariants and the left Galois map is a bijection.
\end{thm}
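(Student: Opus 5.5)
The plan is to realize $A\#_\kappa\CH\#_\sigma N$ as the algebra $(Q\ou{\ol B}P)^{\co\CH}$ with $Q=A\#_\kappa\CH$ and $P=N\#_\sigma\CH$, exactly as the Connes--Moscovici Hopf algebroid was obtained for $Q=P$ in \Cref{Ehresmann in the cleft case}. By \Cref{calculating inverse part one}(1),(3), applied to the reversible comodule algebra $Q$, the map
\[\theta\colon \ol N\ou{\ol B}Q\to (Q\ou{\ol B}P)^{\co\CH}\]
is an algebra isomorphism for the twisted multiplication on the source. Under the identification $\ol N\ou{\ol B}(A\ou B\CH)\cong A\ou B\CH\di N$, this multiplication becomes \eqref{equ. twist on two side products1}: write the crossed product multiplication of $Q$ and insert $\sigma\inv$ and the action on the $N$-factor. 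Hence \eqref{equ. twist on two side products1} is associative and unital without further computation. The explicit map is $a\ot X\ot n\mapsto a\trho_\kappa(X_+)\ot\rho_\sigma(X_-)n$.

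For the coactions, I would use that $L(Q,\CH)\cong A\#_\kappa\CH\#_\kappa A$ and $L(P,\CH)\cong N\#_\sigma\CH\#_\sigma N$ by \Cref{Ehresmann in the cleft case}. The left coaction is induced by the left $L(Q,\CH)$-coaction on the $Q$-factor, which commutes with the right $\CH$-coaction and therefore preserves coinvariants. The right coaction comes symmetrically from the right $L(P,\CH)$-coaction on $P\inv$ described at the end of that theorem. Colinearity, the algebra map property, and coassociativity are inherited factorwise. To get the stated formula $a\ot X\ot n\mapsto a\ot X\o\ot1\ot1\ot X\t\ot n$, I would reuse the computation in the proof of \Cref{Ehresmann in the cleft case}, namely $\kappa(\theta(n\ot X\o\ot1)\ot1\ot X\t)=n\trho(X)\ot1$, and transport it along $\theta$.

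For the Galois maps, the key input is \Cref{calculating inverse part one}(5): the counit $(Q\ou{\ol B}P)^{\co\CH}\ou N P\to Q\ou{\ol B}P$ is bijective. Writing $R=(Q\ou{\ol B}P)^{\co\CH}$, we have
\[R\ou N L(P,\CH)=R\ou N(P\ou{\ol B}P)^{\co\CH},\]
and I want to compare it with $R\ou A R$. Concretely, I would build the inverse of the anti-Galois map explicitly, using the cleavings of both crossed products (the translation map is $\rho(X_{[+]})\ot\trho(X_{[-]})$ on each side). For the right map I expect the formula
\[r\ot(n\ot X\ot m)\mapsto r\,(1\ot X_{[+]}\o\ot\ldots)\ot_A(\ldots)\]
to be modeled on the right translation map of the CM algebroid given in its Lemma and Definition. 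I would then check the two compositions using the identities of \Cref{prop. cleaving maps}, namely $X_+\wedge(X_-\vee n)=\varepsilon(X)n$ and its mirror. The left Galois map is handled by the co-opposite symmetry, swapping the roles of $(A,\kappa)$ and $(N,\sigma)$ via $P\mapsto P\inv$. The coinvariant statements, that $A$ lands in the right coinvariants and $N$ in the left coinvariants, are immediate from the coaction formulas.

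I expect the main obstacle to be the bijectivity of the Galois maps without flatness assumptions. The tensor product $\ou A$ and the Takeuchi-type subspaces cannot simply be computed factorwise, so the proof has to rely on an explicit inverse built from the crossed product normal bases $A\ou B\CH\di N$. It must also be checked that this inverse descends to the balanced tensor products, which is where the $B$-linearity properties \eqref{2-cocycle 9} and \eqref{2-cocycle 10} of $\sigma^{\pm1}$ enter.
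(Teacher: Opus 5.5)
Your identification of $A\#_\kappa\CH\#_\sigma N$ with $(Q\ou{\ol B}P)^{\co\CH}\cong Q\Box^\CH P\inv$, where $Q=A\#_\kappa\CH$ and $P=N\#_\sigma\CH$, is exactly what the paper does. It gives associativity and the bicomodule structure.

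One small point there. Without flatness, ``the coaction commutes with the $\CH$-coaction, hence preserves coinvariants'' is not automatic, because $L(Q,\CH)\diamond_A(\text{---})$ need not preserve the equalizer. What makes it work is that the equalizer defining $Q\Box^\CH P\inv$ is split, since both $Q$ and $P\inv$ have normal bases.

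The genuine gap is the bijectivity of the Galois maps, which is the substance of the theorem. Your proposal does not prove it. The inverse is written as $r(1\ot X_{[+]}\o\ot\ldots)\ot_A(\ldots)$ and never specified, and the tools you name cannot complete it. The identities of \Cref{prop. cleaving maps} involve only the $\sigma$-cleaving. But the anti-Galois map $r\ot r'\mapsto r\z r'\ot r\o$ multiplies in the $Q$-factor, so it produces factors $\kappa(X\t,Y\o)$. Any inverse must undo these with $\kappa\inv$, and your plan never says where the invertibility of $\kappa$ enters. Likewise, \Cref{calculating inverse part one}(5) is about the functor $(\text{---})\ou NP$, namely $(Q\ou{\ol B}P)^{\co\CH}\ou NP\cong Q\ou{\ol B}P$. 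It says nothing about $R\ou AR$. Note also that the target of the Galois map is $R\diamond_N L(P,\CH)$, not $R\ou N L(P,\CH)$. An explicit inverse might exist, but as written this step is only a hope.

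The missing idea is to use the monoidal structure of $Q\Box^\CH(\text{---})$ rather than a formula. By \Cref{monoidal functor}, invertibility of $\kappa$ makes
\[\xi\colon(Q\Box^\CH V)\ou A(Q\Box^\CH W)\to Q\Box^\CH(V\ou BW)\]
an isomorphism. The anti-Galois map of $R=Q\Box^\CH P\inv$ then factors as
\[
R\ou AR\xrightarrow{\ \xi\ }Q\Box^\CH(P\inv\ou BP\inv)\xrightarrow{Q\Box\rcan}Q\Box^\CH\bigl(P\inv\diamond_NL(P,\CH)\bigr)\cong A\ou B\bigl(P\inv\diamond_NL(P,\CH)\bigr)\cong R\diamond_NL(P,\CH).
\]
Here $\rcan$ is the anti-Galois map of $P\inv$ as a right comodule algebra over the Connes--Moscovici algebroid, which is bijective by \Cref{Ehresmann in the cleft case}. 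The last two isomorphisms are the normal-basis identifications of \Cref{monoidal functor}(1). This requires no explicit inverse and no flatness.

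The left Galois map is handled the same way. Use the monoidal structure of $(\text{---})\Box^\CH P\inv$, which exists because $\sigma\inv$ is an invertible cocycle for $\CH^\cop$. Combine it with the bijectivity of the left Galois map of $Q$ over $L(Q,\CH)$, again from \Cref{Ehresmann in the cleft case}.
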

\begin{proof}
    The algebra $A\#_{\kappa}\CH\#_{\sigma}N$ defined above is the cotensor product $Q\Box^\CH P\inv$ with $P=N\#_\sigma\CH$ and $Q=A\#_\kappa\CH$. It is a $L(Q,\CH)$-$L(P,\CH)$-bicomodule algebra as indicated because the cotensor product is a split equalizer both because $Q$ and because $P\inv$ has normal basis, so there is no issue with defining the comodule structures. As for the right anti-Galois map, we can write it as a composition
    \begin{multline*}(Q\Box^\CH P\inv)\ou A(Q\Box^\CH P\inv)\xrightarrow\xi Q\Box^\CH(P\inv\ou BP\inv)\\\xrightarrow{Q\Box\rcan}Q\Box^\CH(P\inv\diamond_N L(P,\CH))\cong
    A\ou BP\inv\diamond_NL(P,\CH)
    \cong (Q\Box^\CH P\inv)\diamond_NL(P,\CH),\end{multline*}
    where $\xi$ is the isomorphism from \Cref{monoidal functor}. That the left Galois map is a bijection is analogous.
\end{proof}

\begin{lem}\label{one side twisted CM}
    In the situation of \Cref{cm-bigalois} assume that $N$ is a subalgebra of $A$, and that $A$ is the subalgebra of right coinvariants of $A\#_\kappa\CH\#_\sigma N$.  (For example this is the case if $\CH$ is right faithfully flat  over both copies of $B$, and $N$ is faithfully flat on both sides over $B$, because then we are in the situation of the biGalois theory developed in \cite{HS25}. It is also true if $A=N$. )
    
     Then $A\#_\kappa \CH\#_\sigma N$ is anti-cleft as right $N\#_\sigma\CH\#_\sigma N$-comodule algebra. In particular
    \[A\#_\kappa\CH\#_\sigma N\cong A\#_\Gamma(N\#_\sigma\CH\#_\sigma N)\]
    is a crossed product for the cocycle $(\blacktriangleright,\Gamma)$ of $N\#_\sigma\CH\#_\sigma N$ over $A$ given by
    \begin{align*}
        (m\ot X\ot n)\blacktriangleright a&=m(X\laa a)n\\
        \Gamma(m\ot X\ot n,m'\ot Y\ot n')&=m(X\o\laa m')\kappa(X\t,Y\o)\sigma\inv(X\th,Y\t)(X\fo\la n')n.
    \end{align*}    
\end{lem}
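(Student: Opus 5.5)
The plan is to exhibit an explicit cleaving $(\rho',\trho')$ for the right $\CL:=N\#_\sigma\CH\#_\sigma N$-comodule algebra $R:=A\#_\kappa\CH\#_\sigma N$. Once that is done, the crossed product description follows from \Cref{classical char of cleft extensions} together with the formulas of \Cref{rem. concrete correspondence}. We take
\[\trho'(m\ot X\ot n):=m\ot X\ot n\in R,\]
which makes sense because $N\subseteq A$. This is the candidate suggested by the normal basis $A\ou N(N\ot_B\CH\di N)\cong A\ot_B\CH\di N$. Right $\CL$-colinearity of $\trho'$ is immediate from the coaction formula $a\ot X\ot n\mapsto a\ot X\o\ot 1\ot1\ot X\t\ot n$. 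Left $N$-linearity needs one computation in \eqref{equ. twist on two side products1} with $a=m$, $X=1$: the factor $1\laa m$ collapses by \eqref{2-cocycle 8}, and $\kappa(1,-)$, $\sigma\inv(1,-)$ are counital.

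We define the anti-skewed inverse $\rho'$ by transporting the formula of \Cref{rem. concrete correspondence}. Concretely, $\rho'(Z)=Z\yi{}\,\pi'(Z\er{})$, where $\pi'\colon R\to A$ is the obvious left $A$-linear splitting $a\ot X\ot n\mapsto a(X\laa n)$ (equivalently $a\,\varepsilon$-type contraction). The translation map $\rtau$ for $R$ exists because the right anti-Galois map is bijective by \Cref{cm-bigalois}. By \Cref{Doi-integrals}(2), $\rho'$ is then right colinear and unital.

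Next, following the last part of the proof of \Cref{classical char of cleft extensions} (the step ``(3) and (2) imply (1)''), we need two facts. First, $R$ has a normal basis over $A=R^{\co\CL}$. This is where the hypothesis that $A$ is exactly the coinvariants enters, together with reversibility of $R$, which comes from the normal basis as in that proof. Second, $R$ is anti-right Galois, which \Cref{cm-bigalois} supplies. That argument then yields $\rho'\askpr\trho'=\eta\ol\varepsilon$ and $\trho'\skpr\rho'=\eta\varepsilon$. So rather than verifying the skewed equations by hand, I would simply invoke \Cref{classical char of cleft extensions}: conditions (2) give (1).

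The main obstacle is establishing the normal basis $R\cong A\ou N\CL$ as left $A$-modules and right $\CL$-comodules. I would use the map $a\ot(m\ot X\ot n)\mapsto am\ot X\ot n$, i.e.\ $a\ot Z\mapsto a\,\trho'(Z)$. It is a bijection because $A\ou N(N\ot_B\CH\di N)\cong A\ot_B\CH\di N$, and it is left $A$-linear and colinear by construction.

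Finally, the cocycle and action are read off from \Cref{rem. concrete correspondence} via the projection $\pi'$:
\[Z\blacktriangleright a=\pi'(\trho'(Z)a),\qquad \Gamma(Z,Z')=\pi'(\trho'(Z)\trho'(Z')).\]
Evaluating these with \eqref{equ. twist on two side products1} at $Z=m\ot X\ot n$, $Z'=m'\ot Y\ot n'$ and applying $\varepsilon$ on the $\CH$ factor gives the stated formulas. For the action, $(m\ot X\ot n)(a\ot1\ot1)$ projects to $m(X\laa a)n$, using that $N$ commutes with the right coinvariant structure. The $\Gamma$ formula is a direct specialization in which the middle $X\th Y\t$ factor is counited away.
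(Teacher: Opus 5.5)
Your argument is the paper's. The evident normal basis $A\ou N(N\ou B\CH\di N)\cong A\ou B\CH\di N$, $a\ot Z\mapsto a\trho'(Z)$, combined with the anti-Galois property from \Cref{cm-bigalois}, gives anti-cleftness and the crossed product structure via \Cref{classical char of cleft extensions}. The action and cocycle are then read off by projecting $\trho'(Z)a$ and $\trho'(Z)\trho'(Z')$ to $A$.

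The one thing to fix is the splitting you name. The projection belonging to this normal basis is $\Pi(a\ot X\ot n)=a\varepsilon(X)n$, which is the transport of $a\ot Z\mapsto a\varepsilon(Z)$. The map $a\ot X\ot n\mapsto a(X\underline{\triangleright}n)$ is not equivalent to it, for two reasons.

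\begin{enumerate}
\item It is in general not even well defined on $\CH\di N$. The relation $\ol bX\ot n=X\ot bn$ would force $(X\underline{\triangleright}n)b=(X\o\underline{\triangleright}b)(X\t\underline{\triangleright}n)$.
\item Where it is defined (say $B=k$), it is a different left $A$-linear splitting. By \Cref{Doi-integrals}(4) and \Cref{cor. skew inverse is unique}, the resulting $\rho'$ is then not the anti-skewed inverse of $\trho'$. It would also give the action $m(X\underline{\triangleright}(an))$ rather than $m(X\underline{\triangleright}a)n$.
\end{enumerate}

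Your actual evaluations (``applying $\varepsilon$ on the $\CH$ factor'') already use $\Pi$. So the fix is to replace $\pi'$ by $\Pi$, and either drop the explicit construction of $\rho'$ or redo it with $\Pi$; the theorem supplies $\rho'$ anyway. With that change your proof is complete and coincides with the paper's.
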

\begin{proof}
    It is rather obvious that $A\#_\kappa\CH\#_\sigma N$ has normal basis as a right anti-Galois extension, and thus it is a crossed product as indicated. The action and cocycle can be computed by using the projection 
    \[\Pi\colon A\#_\kappa\CH\#_\sigma N\to A;\quad a\ot X\ot n\mapsto a\varepsilon(X)n\]
    namely $\xi\blacktriangleright a=\Pi(\xi(a\# 1\# 1))$ and $\Gamma(\xi,\xi')=\Pi(\xi\xi')$ implicitly (and non-multiplicatively) injecting $N\#_\sigma\CH\#_{\sigma}N$ into $A\#_\kappa\CH\#_\sigma N$.
    \end{proof}
\begin{cor}
    In the situation of \Cref{one side twisted CM} assume $A=N$. Then $N\#_\kappa\CH\#_\sigma\CH$ is cleft as both a left and right comodule algebra. Therefore $N\#_\kappa\CH\#_\kappa N$ is a Drinfeld twist of $N\#_\sigma\CH\#_\sigma\CH$ by \Cref{lem. two side cleft extensions}.
\end{cor}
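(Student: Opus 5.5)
The plan is to reduce the statement to \Cref{lem. two side cleft extensions}. For this I will exhibit $Q:=N\#_\kappa\CH\#_\sigma N$ as a one-sided Drinfeld cocycle twist ${}_\Gamma(N\#_\sigma\CH\#_\sigma N)$, and then identify its left Ehresmann Hopf algebroid with $N\#_\kappa\CH\#_\kappa N$. (In the statement the last tensor factor should of course read $N$; I write $\CL_\sigma:=N\#_\sigma\CH\#_\sigma N$ and $\CL_\kappa:=N\#_\kappa\CH\#_\kappa N$.)

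\emph{Right side.} With $A=N$, the hypothesis of \Cref{one side twisted CM} holds, as remarked there. So $Q$ is anti-cleft as a right $\CL_\sigma$-comodule algebra, and $Q\cong N\#_\Gamma\CL_\sigma$ with the explicit pair $(\blacktriangleright,\Gamma)$. Here the coinvariants coincide with the base $N$ of $\CL_\sigma$. By \Lemma~and~Definition~\ref{def. 2-cocycle on L} and \Cref{thm. one side twist is cleft extension}, $\Gamma$ is therefore an invertible $2$-cocycle on the $N$-Hopf algebroid $\CL_\sigma$, and $Q\cong{}_\Gamma\CL_\sigma$. Invertibility of $\Gamma$ follows from \Cref{monoidal functor}(3), since \Cref{cm-bigalois} says the right anti-Galois map is bijective. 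Explicitly, one expects $\Gamma^{-1}$ to be the analogous expression with $\kappa^{-1}$ and $\sigma$ interchanged.

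\emph{Left side.} This is the mirror image. I consider $Q$ as a left $\CL_\kappa$-comodule algebra, that is, as a right comodule algebra over $\CL_\kappa^\cop$. By \Cref{cm-bigalois}, $N$ maps to the left coinvariants and the left Galois map is bijective. The left normal basis $Q=N\ot_B\CH\diamond_B N\cong\CL_\kappa\diamond_N N$ is evident in the same way as in the proof of \Cref{one side twisted CM}. Applying the coopposite version of \Cref{classical char of cleft extensions}, now with the roles of $\kappa$ and $\sigma$ exchanged, gives left cleftness. The only point to verify is that the left coinvariants are exactly $N$. Since $A=N$, I argue symmetrically: the projection $a\ot X\ot n\mapsto a\varepsilon(X)n$ splits the equalizer as in \Cref{Doi-integrals}(1), which forces ${}^{\co\CL_\kappa}Q=N$.

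\emph{Identifying the Ehresmann algebroid.} By \Cref{lem. two side cleft extensions}, ${}_\Gamma\CL_\sigma$ is a $\CL_\sigma^\Gamma$-$\CL_\sigma$-biGalois extension. Here $\CL_\sigma^\Gamma$ is $\CL_\sigma$ with the product \eqref{twistprod} and the left coaction $\xi\mapsto\xi\o\ot\xi\t$. On the other hand, the left $\CL_\kappa$-coaction on $Q$ commutes with the right $\CL_\sigma$-coaction. The universal property in \Cref{Ehresmann in the cleft case}, whose proof only uses the adjunction of \Cref{hopf module adjunction} and \Cref{calculating inverse part one}(5), therefore yields a bialgebroid map $f\colon\CL_\sigma^\Gamma\to\CL_\kappa$ compatible with the coactions, so that $\lcan_\kappa=(f\diamond_N Q)\circ\lcan_\Gamma$. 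Both Galois maps are bijective, hence $f\diamond_N Q$ is bijective. Since $Q\cong\CL_\kappa\diamond_N N$ is free over $N$, with $N$ a direct summand, this reflects isomorphisms and $f$ is bijective. Concretely, $f$ should be the identity on $N\ot_B\CH\diamond_B N$, because both coactions are $a\ot X\ot n\mapsto a\ot X\o\ot1\ot1\ot X\t\ot n$.

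I expect the main obstacle to be this identification, namely justifying the reflection argument without flatness. As a fallback I would verify directly that the twisted product \eqref{twistprod} on $\CL_\sigma$, built from $\Gamma$ and $\Gamma^{-1}$, reproduces the $\CL_\kappa$-multiplication. This is a lengthy computation in which $\sigma^{-1}\sigma$ and $\sigma\sigma^{-1}$ cancel via \eqref{2-cocycle 11}--\eqref{2-cocycle 12}.
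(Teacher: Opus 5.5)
Your proposal is correct and takes the route the paper intends. The paper gives no separate proof: the corollary is read off from \Cref{one side twisted CM}, \Cref{thm. one side twist is cleft extension} and \Cref{lem. two side cleft extensions}, together with the mirror-image argument on the left. Your use of the universal property in \Cref{Ehresmann in the cleft case}, combined with the direct-summand argument for reflecting isomorphisms, makes explicit the identification $\CL_\kappa\cong\CL_\sigma^\Gamma$ that the paper leaves implicit.

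One small point concerns the left coinvariants. A splitting of $N\subset Q$ does not by itself show ${}^{\co\CL_\kappa}Q=N$. However, $Q$ is literally $\CL_\kappa$ as a left $\CL_\kappa$-comodule, so applying the counit gives directly that the left coinvariants are the image $1\ot 1\ot N$ of the target map.
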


\begin{rem} 
If $N\subset A$ is a subalgebra, and the action of $\CH$ on $N$ is not the restriction of the action on $A$, then the cocycle $\Gamma$ is nontrivial even if both $\sigma$ and $\kappa$ are trivial. Also, in this case the action of  $N\#_\sigma\CH\#_\sigma N$ on $A$ does not satisfy the equivalent conditions of \Cref{rem. bilinear 2-cocycle of L over N}. 

    If $A=N$, but the actions of $\CH$ are different, this means that the restriction of the action of $N\#_\sigma\CH\#_\sigma N$ on $A$ to $N$ is not the trivial action, which is $(n\ot X\ot m)\la n'=\varepsilon((n\ot X\ot m)n')=n\varepsilon(X\la n')m$. Thus our generalization of a Drinfeld twist is in fact strictly more general than the version in \cite{Boehm,HM22,HM23}.

    Examples of such situations can already rather easily be constructed in the case where $\CH$ is an ordinary Hopf algebra (while $N\#_\sigma\CH\#_\sigma H$ would then still be a Hopf algebroid). 

    As a somewhat counterintuitive example consider a $k$-Hopf algebra $\CH$ and a Hopf crossed product $P=N\#_\sigma \CH$ with nontrivial action and cocycle. Consider any algebra $A$ with $N\subset A$ and the trivial action of $\CH$ on $A$ with trivial cocycle. Then $R=A\#_\kappa\CH\#_\sigma N=A\ot (N\#_\sigma\CH)\inv$ is just a tensor product of algebras, but it is a crossed product 
    \[R=A\#_\Gamma L(N\#_\sigma\CH)\]
    in which the action does not restrict to the trivial action on $N$, and  and the cocycle is nontrivial.
\end{rem}

\appendix

\section{Cleft extension for  Hopf algebroids with bijective antipodes} In this section we are going to show when $\CH$ is a Hopf algebroid with a bijective antipode in the sense of \cite{BS}, the two definition of cleft extensions concide. Recall that

\begin{defi}\label{def:right.bgd} Let $A$ be a unital algebra. A right bialgebroid over $A$ (or right $A$-bialgebroid) is an algebra $\cR$ with  algebra maps $s_{R}:A\to \cR$ and $t_{R}:A^{op}\to \cR$ whose images commute,  and a $A$-coring for the bimodule structure given by $a.X.a'=X\,s_{R}(a')\,t_{R}(a)$ which is compatible in the sense
\begin{itemize}
\item[(i)] The coproduct $\Delta_{R}$ corestricts to an algebra map  $\cR\to \cR{}_{A}\times \cR$ where
\begin{equation*} \cR{}_{A}\times \cR :=\{\ \sum_i X_i \blacklozenge_{A} Y_i\ |\ \sum_i s_{R}(a)X_i \ot Y_i=
\sum_i X_i \ot  t_{R}(a)Y_i  ,\quad \forall a\in A\ \}\subseteq \cR\blacklozenge_{A}\cR,
\end{equation*}
 is an algebra via factorwise multiplication. We use the upper sumless Sweedler index to denote the coproduct, namely, $\Delta_{R}(X)=X\teins{}\ot X\tzwei{}$.
\item[(ii)] The counit $\varepsilon$ is a right character in the following sense:
\begin{equation*}\varepsilon_{R}(1_{\cR})=1_{A},\quad \varepsilon_{R}(s_{R}(\varepsilon_{R}(X))Y)=\varepsilon_{R}(XY)=\varepsilon_{R}(t_{R}(\varepsilon_{R}(X))Y)\end{equation*}
for all $X,Y\in \cR$ and $a\in A$.
\end{itemize}
\end{defi}

\begin{defi}\label{def. full Hopf algebroid1}
    A left $B$-bialgebroid $(\cL, \varepsilon_{L}, \Delta_{L}, s_{L}, t_{L}, m)$ is a full Hopf algebroid, if there is an invertible anti-algebra map $S:\cL\to \cL$, such that
    \begin{itemize}
        \item [(i)] $S\circ t_{L}=s_{L}$,
        \item [(ii)] $\one {(S^{-1}\two{X})} \ot \two {(S^{-1}\two{X})}\one{X}  = S^{-1}(X) \ot 1_\cL$
        \item [(iii)] $\one {S(\one{X})} \two{X} \ot \two{S(\one{X})}  = 1_\cL \ot S(X)$.
    \end{itemize}
\end{defi}

There is another equivalent definition of full Hopf algebroids with bijective antipodes given by \cite{BS}

\begin{defi}\label{def. full Hopf algebroid2}
    A full Hopf algebroid consists of a left bialgebroid $(\cL, \varepsilon_{L}, \Delta_{L}, s_{L}, t_{L}, m)$ over $B$ and a right bialgebroid $(\cR, \varepsilon_{R}, \Delta_{R}, s_{R}, t_{R}, m)$ over $A$, such that 
    \begin{itemize}
        \item [(i)] $A$ is isomorphic to $B^{op}$,  $\cL$ and $\cR$ have the same underlying algebra structure (denoted by $H$);
        \item [(ii)] $s_{L}(B)=t_{R}(A)$ and $t_{L}(B)=s_{R}(A)$ as subrings of $H$;
        \item [(iii)] $(\id\di\Delta_{R})\circ\Delta_{L}=(\Delta_{L}\blacklozenge_{A}\id)\circ\Delta_{R}$ and $(\id\blacklozenge_{A}\Delta_{L})\circ\Delta_{R}=(\Delta_{R}\di\id)\circ\Delta_{L}$;
        \item [(iv)] $\cL$ is a left Hopf and anti-left Hopf algebroid. 
    \end{itemize}
\end{defi}
\begin{rem}
Given a Hopf algebroid  $(\cL, \varepsilon_{L}, \Delta_{L}, s_{L}, t_{L}, m)$ with $S$ as in Definition \ref{def. full Hopf algebroid1}, we can construct a right bialgebroid $(\cR, \varepsilon_{R}, \Delta_{R}, s_{R}, t_{R}, m)$ over $A$ with $A:=B^{op}$ and $\cR:=\cL$ as algebra; the source and target maps are given by

\begin{align}\label{equ. left source target maps to right source and target maps}
  s_{R}(a):=t_{L}(a), \quad t_{R}(a):=S^{-1}\circ t_{L}(a);  
\end{align}
the coproduct is given by
\begin{align}\label{equ. left coproduct to right coproduct}
    \Delta_{R}(X):=S(S^{-1}(X)\t)\ot S(S^{-1}(X)\o)=S^{-1}(S(X)\t)\ot S^{-1}(S(X)\o);
\end{align}
the counit is given by
\begin{align}\label{equ. left counit to right counit}
\varepsilon_{R}:=\varepsilon_{L}\circ S.
\end{align}
We can also see $\cL$ is a left Hopf algebroid and an anti-left Hopf algebroid, with 
\begin{align}\label{equ. Galois map of full Hopf}   X_{+}\ot_{\BB}X_{-}=X\teins{}\ot_{\BB}S(X\tzwei{}),\quad X_{[+]}\ot_{B}X_{[-]}=X\tzwei{}\ot_{B}S^{-1}(X\teins{}).
\end{align}
We also have
\begin{align}\label{equ. anti-colinear}
    S^{\pm}(X)\teins{}\ot S^{\pm}(X)\tzwei{}&=S^{\pm}(X\t)\ot S^{\pm}(X\o),\\
    S^{\pm}(X)\o\ot S^{\pm}(X)\t&=S^{\pm}(X\tzwei{})\ot S^{\pm}(X\teins{}).
\end{align}
\end{rem}
 We will always use $(\cL, \cR, S)$ to denote the full Hopf algebroid with the left $B$-bialgebroid $\cL$ and right $A$-bialgebroid $\cR$ with $H$ being the underlying algebra of $\cL$ and $\cR$ as in Definition \ref{def. full Hopf algebroid2}, equipped with an invertible antipode $S$ in the sense of \Cref{def. full Hopf algebroid1}. Moreover, in order to simplify the relation in $(\cL, \cR, S)$, with no loss of generality, we will always assume the base algebra of $\cL$ is the opposite base algebra of $\cR$, i.e. $A=B^{op}$, with their structure satisfies (\ref{equ. left source target maps to right source and target maps}) and (\ref{equ. left coproduct to right coproduct}).

 By \cite{BB}, a comodule of $(\CL,\CR, S)$ is defined to be a comodule of $\CL$. Here we give the definition of cleft extensions of full Hopf algebroids with bijective antipode:
 \begin{defi}\cite{BB}\label{def. cleft extension by BB}
     Let $(\CL,\CR, S)$ be a full Hopf algebroid over $B$ with bijective antipode. A cleft extension is a right $\CL$-comodule algebra $P$ such that 
      \begin{itemize}
        \item [(i)] $B\subseteq P^{\co\CL}$ as subalgebra,
 \item[(ii)]  There is a right $\cL$-colinear map $j:\CL\to P$  such that $j(s_{L}(b)\,X\,s_{R}(\ol a))=bj(X)\ol a$ and a map  $j^{c}:\CL\to P$ such that $j^{c}(t_{L}(b)\,X\,t_{R}(\ol a))=\ol a \,j^{c}(X)b$. Moreover, 
 \[j(X\teins{})j^{c}(X\tzwei{})=\varepsilon_{L}(X)\,\qquad j^{c}(X\o)j(X\t)=\overline{\varepsilon_{R}(X)}, \]
 for any $a,b\in B$ and $X\in \CL$.
 \end{itemize}
 \end{defi}

\begin{thm}
    If $(\CL,\CR, S)$ is a Hopf algebroid  with bijective antipode, then \Cref{def. cleft extension by BB} and \Cref{Anti-cleft extensions} are equivalent.
\end{thm}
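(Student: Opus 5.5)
The plan is to translate the cleaving data directly, using \eqref{equ. Galois map of full Hopf}. In a full Hopf algebroid the translation maps are expressed through the antipode and the right coproduct:
\[
X_+\ot X_-=X\teins{}\ot S(X\tzwei{}),\qquad X_{[+]}\ot X_{[-]}=X\tzwei{}\ot S^{-1}(X\teins{}).
\]
Given a Böhm--Brzeziński cleaving $(j,j^c)$ as in \Cref{def. cleft extension by BB}, I set
\[
\trho:=j,\qquad \rho:=j^c\circ S^{-1}.
\]
Conversely, given an anti-cleft cleaving $(\rho,\trho)$ as in \Cref{Anti-cleft extensions}, I set
\[
j:=\trho,\qquad j^c:=\rho\circ S.
\]
These assignments are visibly mutually inverse. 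This matches \Cref{rem. classical explaination}, where the anti-skewed inverse is the convolution inverse composed with $S^{-1}$.

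First I will check that the linearity conditions correspond. The condition $j(s_L(b)Xs_R(\ol a))=bj(X)\ol a$ is, via $s_R=t_L$, exactly left $B$-linearity together with right $\ol B$-linearity of $\trho$. For $\rho=j^c S^{-1}$, I use $S\circ t_L=s_L$ and $t_R=S^{-1}t_L$, so that $S^{-1}$ carries $s_L(b)$ to $t_L(b)$ and $t_L(a)$ to $t_R(a)$. Since $S^{-1}$ is anti-multiplicative, the condition $j^c(t_L(b)Xt_R(\ol a))=\ol a\,j^c(X)b$ turns into $\rho$ being a $B$-$\ol B$ bimodule map with the correct sides, together with the right $B$-linearity that is needed for $\rho$.

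Next I will check that the inverse equations correspond. Substituting the formulas above gives
\[
\trho(X_+)\rho(X_-)=j(X\teins{})\,j^c(S^{-1}S(X\tzwei{}))=j(X\teins{})j^c(X\tzwei{}).
\]
So the first equation of \Cref{def. cleft extension by BB} is exactly $\trho\skpr\rho=\eta\varepsilon$. Similarly,
\[
\rho(X_{[+]})\trho(X_{[-]})=j^c(S^{-1}(X\tzwei{}))\,j(S^{-1}(X\teins{})).
\]
By \eqref{equ. anti-colinear} this equals $j^c(Y\o)j(Y\t)$ with $Y=S^{-1}(X)$, and the second BB equation turns it into $\ol{\varepsilon_R(S^{-1}X)}=\ol{\varepsilon_L(X)}$ by \eqref{equ. left counit to right counit}. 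Hence the second equation is exactly $\rho\askpr\trho=\eta\ol\varepsilon$. Both directions are then immediate.

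It remains to handle colinearity and reversibility. Definition \ref{def. cleft extension by BB} only asks $j$ to be colinear, while \Cref{Anti-cleft extensions} asks colinearity of $\trho$; the two coincide since $\trho=j$. Colinearity of $\rho$ then follows from \Cref{lem. skew inverse of also colinear}, so it need not be matched with anything. Reversibility of the comodule, which is required in \Cref{Anti-cleft extensions} but not by \cite{BB}, is the main obstacle. I would deduce it by showing that a BB-cleft $P$ has a normal basis $P\cong N\ou B\CL$, using the map $p\mapsto p\z{}$-projection built from $j^c$ analogously to \Cref{classical char of cleft extensions}. Then, as in that proof, $\psi$ identifies with $N\ou B\lambda$, which is bijective. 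Alternatively, I would write the reverse coaction explicitly as $p\rz\ot p\rmo=p\z\ot S(p\o)$, or a variant using $S^{-1}$, and verify that it inverts $\psi$ with the antipode axioms of \Cref{def. full Hopf algebroid1}; this is the step where care with the $\ou\BB$ balancing is needed.
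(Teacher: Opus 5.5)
Your handling of the cleaving data is the paper's proof. You use the same assignments $\trho=j$, $\rho=j^c\circ S^{-1}$ and, conversely, $j=\trho$, $j^c=\rho\circ S$, the same linearity checks, and the same two computations of $\trho\skpr\rho$ and $\rho\askpr\trho$ via \eqref{equ. Galois map of full Hopf}, \eqref{equ. anti-colinear} and \eqref{equ. left counit to right counit}. The paper disposes of reversibility with the single remark that, thanks to the antipode, every comodule is reversible.

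Where you go beyond this and try to prove reversibility, neither suggested route works as written.

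\emph{The explicit formula.} $p\rz\ot p\rmo=p\z\ot S(p\o)$ does not define an element of $P\ou\BB\CL$. The relation $\ol b p\z\ot p\o=p\z\ot bp\o$ in $P\di\CL$ turns into $S(p\o)S(s_L(b))$, a factor on the right of the $\CL$-leg. Such a factor cannot be moved across $\ou\BB$; with $S^{-1}$ the same happens with $t_L(b)$. Already for $P=\CL$ one has $X\rz\ot X\rmo=\lambda\inv(X\di 1)=X\teins{}\ot S(X\tzwei{})$, which involves the right coproduct. In general the antipode-based reverse coaction must be built from the compatible right $\CR$-coaction $p\mapsto p^{[0]}\ot p^{[1]}$, namely $p\rz\ot p\rmo=p^{[0]}\ot S(p^{[1]})$. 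One checks that $q\ot Y\mapsto q^{[0]}\ot S(q^{[1]})Y$ inverts $\psi$ using the two compatibility conditions between the coactions, together with $X\teins{}S(X\tzwei{})=\varepsilon(X)$ and $S(X\o)X\t=\ol{\varepsilon_R(X)}$.

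\emph{The normal-basis route.} As phrased, this is circular. The projection in \Cref{classical char of cleft extensions} is $\pi_\rho(p)=p\rz\rho(p\rmo)$, which already uses the reverse coaction. The naive substitute $p\z j^c(p\o)$ is not well defined, because $j^c$ is not linear for left multiplication by $s_L(B)$.
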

\begin{proof}
 With the help of antipode, every comodule is  reversible.   Assume we have $(j,j^c)$ as in  \Cref{def. cleft extension by BB} and we view $\cL$ as a Hopf algebroid by (\ref{equ. Galois map of full Hopf}), we define $\trho:=j$ and $\rho:=j^{c}\circ S^{-1}$. By definition $\trho$ is right $\cL$-colinear and satisfies $\trho(s_{L}(b)X)=b\trho(X)$. We can also see $\rho(Xs_{L}(b))=j^{c}\circ S^{-1}(Xs_{L}(b))=j^{c}(t_{L}(b)S^{-1}(X))=j^{c}(S^{-1}(X))b=\rho(X)b$. Now, by \Cref{equ. Galois map of full Hopf} we can check 
     \begin{align*}
         \trho(X_{+})\rho(X_{-})=& \trho(X\teins{})\rho(S(X\tzwei{}))
         =j(X\teins{})j^{c}(X\tzwei{})=\varepsilon_{L}(X);
     \end{align*}
     \begin{align*}
         \rho(X_{[+]})\trho(X_{[-]})=&\rho(X\tzwei{})\trho(S^{-1}(X\teins{}))=j^{c}( S^{-1}(X\tzwei{}))j(S^{-1}(X\teins{}))\\
         =&j^{c}( S^{-1}(X)\o)j(S^{-1}(X)\t)=\overline{\varepsilon_{R}(S^{-1}(X))}=\overline{\varepsilon_{L}(X)},
     \end{align*}
     where the 3rd step uses (\ref{equ. anti-colinear}) and the 5th step uses (\ref{equ. left counit to right counit}). 

     Conversely, assume $(\rho,\trho)$ is the cleaving map in \Cref{Anti-cleft extensions}. Define $j:=\trho$ and $j^{c}:=\rho\circ S$. As $t_{L}=s_{R}$, we have $j(s_{L}(b)\,X\,s_{R}(\ol a))=\trho(s_{L}(b)\,X\,t_{L}(\ol a))=bj(X)\ol a$. Also, $j^{c}(t_{L}(b)\,X\,t_{R}(\ol a))=\rho(S(t_{L}(b)\,X\,t_{R}(\ol a)))=\rho(t_{L}(\ol a)S(X)s_{L}(b))=\ol a \,j^{c}(X)b$. Moreover, as above $j(X\teins{})j^{c}(X\tzwei{})=\trho(X_{+})\rho(X_{-})=\varepsilon_{L}(X)$ Also,
     \begin{align*}
         j^{c}(X\o)j(X\t)=&\rho(S(X\o))\trho(X\t)=\rho(S(X)\tzwei{})\trho(S^{-1}(S(X)\teins{}))\\
         =&\rho(S(X)_{[+]})\trho(S(X)_{[-]})=\overline{\varepsilon_{L}(S(X))}=\overline{\varepsilon_{R}(X)}.
     \end{align*}
 \end{proof}

\end{document}